\newcommand{\E}{{\mathbb E}}
\newcommand{\F}{{\mathbb F}}
\renewcommand{\P}{{\mathbb P}}
\newcommand{\R}{{\mathbb R}}
\newcommand{\Acal}{{\mathcal A}}
\newcommand{\Bcal}{{\mathcal B}}
\newcommand{\Dcal}{{\mathcal D}}
\newcommand{\Fcal}{{\mathcal F}}
\newcommand{\Lcal}{{\mathcal L}}
\newcommand{\Mcal}{{\mathcal M}}
\newcommand{\Ncal}{{\mathcal N}}
\newcommand{\Scal}{{\mathcal S}}
\newcommand{\Tcal}{{\mathcal T}}
\newcommand{\vertiii}[1]{{\left\vert\kern-0.25ex\left\vert\kern-0.25ex\left\vert #1 
    \right\vert\kern-0.25ex\right\vert\kern-0.25ex\right\vert}}
\newcommand{\V}{\mathrm{Var}}
\newcommand{\red}{\color{red}}
\newcommand{\blue}{\color{blue}}
\definecolor{darkgreen}{rgb}{0,0.7,0}
\newcommand{\iii}{{\vert\kern-0.25ex\vert\kern-0.25ex\vert}}
\newcommand{\bec}[1]{\begin{equation} \begin{cases} #1\end{cases} \end{equation}}
\newcommand{\bes}[1]{\begin{equation} \begin{split} #1\end{split} \end{equation}}
\renewcommand{\c}{\alpha}
\definecolor{blue0}{RGB}{0,77,153} % dark blue
\definecolor{red0}{RGB}{179,0,77} % dark blue
\definecolor{green0}{RGB}{134,219,76} % dark blue
\definecolor{gray0}{RGB}{84,97,110}% gray blue
\newtheorem{theorem}{Theorem}
\newtheorem{definition}[theorem]{Definition}
\newtheorem{lemma}[theorem]{Lemma}
\newtheorem{proposition}[theorem]{Proposition}
\newtheorem{remark}[theorem]{Remark}
\theoremstyle{definition}
\numberwithin{equation}{section}
\numberwithin{theorem}{section}
\newcommand{\td}{{t-d}}
\newcommand{\intd}{\int_{t-d}^t}
\newcommand{\intdd}{\int_{[t-d,t]^2}}
\begin{document}

\title{Linear-quadratic stochastic delayed control and deep learning resolution \footnote{We would like to thank Salvatore Federico and Huyên Pham for their useful comments and remarks that helped to improve this article.}}
\author{William Lefebvre\footnote{BNP Paribas Global Markets, Université de Paris and Sorbonne Université, Laboratoire de Probabilités, Statistique et Modélisation (LPSM, UMR CNRS 8001), Building Sophie Germain, Avenue de France, 75013 Paris, \sf wlefebvre at lpsm.paris}  \and Enzo Miller\footnote{Universit\'e de Paris and Sorbonne Universit\'e, Laboratoire de Probabilit\'es, Statistique et Mod\'elisation (LPSM, UMR CNRS 8001), 
Building Sophie Germain, Avenue de France, 75013 Paris,  \sf  enzo.miller at polytechnique.org} }
\maketitle

\begin{abstract}
    We consider a class of stochastic control problems with a delayed control, both in drift and diffusion, of the type $dX_t = \alpha_{t-d}(bdt + \sigma dW_t)$. %The optimal control and value of the problem are expressed in terms of a set of Ricatti partial differential equations whose existence and uniqueness are obtained. A sufficient condition of existence of these latter, directly emerging from the delay feature and expressed as a relation between the horizon $T$ and the quantity $d \left( \frac{b}{\sigma} \right)^2$ is provided. A deep learning scheme\footnote{The code is available in a \href{https://colab.research.google.com/drive/1hmh3VJNZS0lErwXkhnGNWBPNJF_5tbYC?usp=sharing}{IPython notebook.}} { is designed in this framework and implemented} to illustrated the effect of the delay on the Markowitz portfolio allocation problem with execution delay.
    We provide a new characterization of the solution in terms of a set of Riccati partial differential equations. Existence and uniqueness are obtained under a sufficient condition expressed directly as a relation between the horizon $T$ and the quantity $d(b/\sigma)^2$. Furthermore, a deep learning scheme\footnote{The code is available in a \href{https://colab.research.google.com/drive/1hmh3VJNZS0lErwXkhnGNWBPNJF_5tbYC?usp=sharing}{IPython notebook.}} is designed and used to illustrate the effect of delay on the Markowitz portfolio allocation problem with execution delay.
\end{abstract}

\vspace{5mm}

\noindent {\bf Keywords:} Linear-quadratic stochastic control; delay; Riccati PDEs; Markowitz portfolio allocation.

\vspace{5mm}

\noindent {\bf MSC Classification:} 93E20, 60H10, 34K50.

{
  \hypersetup{linkcolor=black}
  \tableofcontents
}

\section{Introduction}
%The control of systems with delays on the state and/or control has interested the research and engineering community in the last decades as it can model a wide variety of applications where a "thing" happened in the past can influence the present. 
The control of systems whose dynamic contains delays on the state and/or control has attracted the attention of the optimization and engineering communities in the last decades due to its wide variety of applications, allowing to tackle problems where the past of a system influences its present or where an agent controls a system with a latency.
As a non-exhaustive list of applications we may cite the following papers, classified by their applications domain: Engineering (\cite{tian1999control}, \cite{huzmezan2002time}); {Advertising} (\cite{sethi1974sufficient}, \cite{pauwels1977optimal},  \cite{gozzi200513}, \cite{gozzi2009controlled}); {Learning by doing with memory effect} (\cite{d2012business}); {Growth model with lags between investment decision and project completion} (\cite{asea1999time}, \cite{hall1977investment}, \cite{jarlebring2007lambert}, \cite{bambi2008endogenous}, \cite{bambi2012optimal}); Investment (\cite{tsoukalas2011time}, \cite{kydland1982time}). More recently, the introduction of delayed control together with mean-field effects was studied (\cite{carmona2018systemic}, \cite{fouque2019deep}) and new machine learning methods have been designed to numerically solve stochastic control problems with delay (\cite{han2021recurrent}). We also refer to the monograph \cite{sipahi2011stability} to find literature on the various effects of delays on traffic flow modelling, chemical processes, population dynamics, supply chain, etc.

In the optimal control community, two main approaches have emerged: the \textit{structural state method} and the \textit{extended state method}, and we refer to \citet[Part II, Chapter 3]{bensoussan2007representation} for the study of the latter in the deterministic case and \cite{fabbri2014infinite} for the structural state approach in the stochastic case. Let us also mention the paper by \cite{fabbri2014infinite} for an overview and exhaustive list of  references.

In this paper, we aim at  studying the challenging case where there is a delayed control both in the drift and volatility. Except in \cite{fabbri2014infinite}, this situation is not treated theoretically nor numerically in  the references above. The main difficulty comes from the fact that the natural formulation of a control problem with delayed control involves a \textit{boundary control problem}. Indeed, assume for instance that $X$ denotes a state variable following the simple dynamic  $\dot{X}_t = \c_{t-d}$, where $\c$ denotes the control. For any time $t$ and index $s \in [-d,0]$, set $u_t(s) = \c_{t+s}$, the \textit{memory} of the control $\c$. Then, note that $\partial_t u_t(s) = \partial_s u_t(s)$ and $u_t(0)=\c_t$. Thus, the natural infinite dimensional formulation of the controlled system is 
\begin{align}
    \text{State eq. on $(X,u)$} &\left\{
        \begin{array}{ll}
            \dot{X}_t = \Mcal u_t  &  \\
            (\partial_t  - \partial_s)  u_t(s) = 0, &
        \end{array}
    \right. 
\\
 \text{Boundary constraint} &\left\{
    \begin{array}{ll}
        \Bcal u_t = \c_t,  &  
    \end{array}
\right. \\
 \text{Initial conditions} &\left\{
    \begin{array}{ll}
        u_0(s) = \gamma_s, \quad X_0 = x,  &  
    \end{array}
\right. \\
\end{align}
where $t,s \in [0,T]\times[-d,0]$, $\Mcal u := u(-d)$, $\Bcal u := u(0)$ and $\gamma$ is the initial value of the control over $[-d,0]$. Consequently, any delayed controlled problem where the delay appears in the control variable can be recast as a boundary control problem whose geometry is parametrized by the delay $d$, see Figure \ref{fig:domain}.\\ 

\noindent\textbf{Main contributions.} Our goal is to shed some lights on the difficulty related to delayed control on the volatility and to provide a practical and simple tool for designing a numerical scheme practitioners can play with.  In this paper, we study the most simple linear-quadratic control problem with delayed control both in drift and volatility. The optimal feedback control and the value function are given in terms of Riccati partial differential equations and the extended state $(x,u) \in H=\R \times L^2([0,T], \R)$, where $x$ denotes the position and $s \in [-d,0] \mapsto u(s)$ the memory of the control. The existence and uniqueness of these latter are proven under a condition, emerging from the delay feature, involving the drift $b$, the volatility $\sigma$, the delay $d$ and the horizon $T$. Finally, we adopt a deep learning approach in the spirit of the papers by \cite{raissi2019physics} (Physics Informed Neural Network) and \cite{sirignano2018dgm} (Deep Galerkin) to  propose a numerical scheme. Our results are illustrated on the celebrated Markowitz portfolio allocation problem where we take into account execution delay. We believe the semi-explicit resolution of infinite dimensional control problem by means of deep learning method will open the door to several interesting applications such as quick simulations of richer models, precise benchmarking of \textit{reinforcement learning} algorithms,  etc.\\

\noindent\textbf{Outline of the paper.} The rest of the paper is organized as follows: In Section \ref{section:heuri} we formulate the stochastic delayed control problem and derive an heuristic approach through a lifting in an infinite dimensional space, namely the \textit{extended state space} in the spirit of \cite{ichikawa1982quadratic}, but without the use of semi-group theory. We state in Section \ref{section:verification_simplified} a verification theorem and prove existence and uniqueness results for the Riccati PDEs.
A deep learning based numerical scheme with two applications on Markowitz portfolio allocation is given in  Section \ref{section:nn}, with a detailed analysis of the effect of the delay feature on the allocation strategy. \\

\noindent\textbf{Notations.}~~\\
Given a probability space $(\Omega, \Fcal, \mathbb{P})$, a filtration $\mathbb{F} = (\Fcal_{ t})_{t\geq  0}$ satisfying the
usual conditions and $a<b$  two real numbers, we denote by
\bes{
    L^2([a,b], \R) &= \left\{ Y : [a,b] \mapsto \R, \text{ s.t. } \int_a^b |Y_t|^2 dt < \infty \right\}, \\
     L_{\mathbb{F}}^2([a,b], \R) &= \left\{ Y : \Omega \times [a,b] \mapsto \R, \mathbb{F}-\text{prog. measurable s.t. } \E\left[\int_a^b |Y_t|^2 dt \right] < \infty \right\}. \\
}
Here $|\cdot|$ denotes the Euclidean norm on $\R$ or $\R^d$, and $H=\R \times L^2([0,T], \R)$ denotes the \textit{extended state space} endowed with the scalar product $\langle x, y \rangle_H = x_0 y_0 + \int_{-d}^0 x_1 (s) y_1 (s) ds$. For any $z=(x,u) \in H$, we use the notation $z_0=x$ and $z_1=u$.

\section{Formulation of the problem and heuristic approach}
\label{section:heuri}
Let $\left( \Omega, \mathcal{F}, \F := (\mathcal{F}_t)_{t \leq 0}, \P \right)$ be a complete filtered probability  space on which a real-valued Brownian motion $(W_t)_{t\leq 0}$ is defined and consider the simple system defined on $[0,T]$ by the following dynamics 
\bec{
    \label{eq:dynamic_heuri}
    &d X_t^{\c} = \c_{t-d}\left( b dt + \sigma dW_t\right),  \qquad 0\leq t \leq T,\\
    &X_0 = x, \quad \c_s = \gamma(s), \qquad s \in [-d, 0],
}
endowed with the cost functional
\bes{
    \label{eq:cost_functional_heuri}
    J(\c) = \E\left[(X^{\c}_T)^2 \right],
}
where $\gamma \in L^2([-d,0], \R)$ and $\c$ models the control chosen in the set of admissible strategies $\Acal$:
\bes{
    \mathcal{A}= \left\{\c \in L_{\F}^{2}([0,T], \R) \text{ such that } \eqref{eq:dynamic_heuri} \text{ has a solution satisfying } \E\left[\sup_{t\leq T}|X_t^\c|^2\right]< \infty\right\}.
}
For any $0 \leq a<b \leq T$, we also define the set $\mathcal{A}_{a,b}$ as the restriction of $\Acal$ to  $L_{\F}^{2}([a,b], \R)$.

\begin{remark}
    At this point, {we may expect a priori that} the optimization problem \eqref{eq:dynamic_heuri}-\eqref{eq:cost_functional_heuri} admits an optimizer provided $\sigma \neq 0$, even if the control is not directly penalized. {The intuition behind this a priori belief is that}, the more $\c$ is aggressive in bringing $X$ to 0, the more the variance of $X$ increases due to the diffusion term. It is the case in the classical LQ stochastic optimization problem with controlled volatility such as
    \bes{
        dX_t^\c &= \c_t(b dt + \sigma dW_t), \qquad t \leq T,\\
        X_0&=x,\\
        J(\c) &= \E[(X^\c_T)^2],
    }
    where the optimal control reads $\c^*_t = -\frac{b}{\sigma^2}X_t^{\c^*}$ and the value function $V_t = e^{(t-T)\frac{b^2}{\sigma^2}}$. A surprising finding in our paper is the necessity for a more restricting {condition} on the diffusion coefficient due to the delay feature, see Proposition \ref{prop:ricatti}.
\end{remark}

\begin{remark}
    In the rest of the paper we focus on the one dimensional case with delayed control both in drift and volatility which features the main difficulties related to the presence of the delay. Although Proposition \ref{prop:ricatti} {concerning the} existence and uniqueness of a Riccati-PDE system does not directly extend to the multidimensional case, the verification Theorem can easily be adapted to the multidimensional case with delayed state and control. 
\end{remark}

The first step consists in lifting the dynamics in the infinite dimensional Hilbert space $H=\R \times L^2([0,T], \R)$, where the system is naturally Markovian. To do so, denote $u_t(s) = \alpha_{t+s}$ for any $t\leq T$ and $s\in [-d, 0]$, a transport of the control. The dynamics \eqref{eq:dynamic_heuri} then reads 
\bec{
    \label{eq:infinite}
    &d Z^\c_t = A Z^\c_t dt + BZ^\c_t d W_t + C d\c_t, \qquad  0\leq t \leq T,\\
    & Z_0 = (x, \gamma),
}
where $Z^\c$ is defined as the $H = \R \times L^2([0,T], \R)$-valued random process $Z^\c_t = (X_t^\c, u_t(\cdot))$ and 
\begin{align}
    A = \begin{pmatrix}
        0 && b\delta_{-d} \\
        0 && \partial_s 
        \end{pmatrix},
        &&
    B = \begin{pmatrix}
        0 && \sigma\delta_{-d} \\
        0 && 0
        \end{pmatrix},
        &&
    C = \begin{pmatrix}
        0 \\
        1_{0}(\cdot)
        \end{pmatrix}.
\end{align}
Let $V$ be the value function 
\bes{
    V(t, z) = V(t, (x,u)) = \inf_{\substack{\c\in \Acal_{t,T} }} \E[ (Z_T^\c)_0^2] =\inf_{\substack{\c\in \Acal_{t,T} }} \E[ (X_T^\c)^2], \qquad z \in H,
}
where $Z^\c$ denotes the solution to \eqref{eq:infinite} starting from $z=(x,u)$ at time $t$. Then, assuming $V\in C^{1,2}\left([0,T]  \times L^2\left([-d, 0]\right), \R \right)$, the dynamic programming principle reads 
\bes{
     V(t, z) &=  \inf_{\substack{\c\in \Acal_{t,t+h} }} \E[ V(t+h, Z^{\c}_{t+h})]\\
             &=  \inf_{\substack{\c \in \Acal_{t,t+h} }}  \E \Bigg[  V(t, z) + \int_t^{t+h} \partial_t V(s,Z_s^{\c}) ds + \int_t^{t+h} \partial_z V(s,Z_s^{\c}) dZ_s^{\c} \\
             & \; \; \; \; + \frac{1}{2} \int_t^{t+h} \partial_z^2 V(s,Z_s^{\c}) d \langle Z^{\c} \rangle_s  \Bigg].\\
             &= \inf_{\substack{\c\in \Acal_{t,t+h} }}  \E \Bigg[  V(t, z) + \int_t^{t+h} \partial_t V(s,Z_s^{\c}) ds + \int_t^{t+h} \partial_z V(s,Z_s^{\c}) (AZ^{\c}_s ds + C d\c_s) \\
             & \; \; \; \; + \frac{1}{2} \int_t^{t+h} \partial_z^2 V(s,Z_s^{\c}) d \langle Z^{\c} \rangle_s  \Bigg],\\
}
Note that $1_{0}(\cdot) = 0_{L^2}$. As a result, simplifying by $V(t, z)$, dividing by $h$ and letting $h \to 0$ yields (informally) the Hamilton-Jacobi equation
\bes{
    \label{eq:hjb}
    \partial_t V + \inf_{\c \in \R} \{ \partial_z V Az + \partial_z^2 V  (Bz \otimes Bz)\} &=0, \qquad t \leq  T, \quad z \in L^2([-d, 0], \R), \\
    V(T, z) &= z_0^2.
}
Recall that in equation \eqref{eq:hjb}, we have $z_1(0) = u(0)=\c$. Let us now assume that the value function $V$ is of the following form 
\bes{
    V(t, z) = \langle P_t z, z \rangle_{H},
}
where $P \in C([0,T], \Lcal(H,H))$ is a self-adjoint bounded positive operator valued function of the form 
\bes{
    P_t : (x,\gamma(\cdot))\mapsto \begin{pmatrix}
            P_{11}(t)x +  \int_{-d}^0 P_{12}(t,s)  \gamma(s)  ds \\
            P_{12}(t,\cdot) x + P_{\hat{22}}(t, \cdot) \gamma(\cdot) + \int_{-d}^0 P_{22}(t,\cdot,s) \gamma(s)ds
        \end{pmatrix}.
}
Thus, for any $z= (x,u) \in H$ such that $u(0)=\c$ and $t\leq T$, equation \eqref{eq:hjb} reduces to
\bes{
    \label{eq:hjb_bis}
    \langle \dot{P}_t z,z \rangle_H + \inf_{\c \in \R} \{ \langle P_t A z, z \rangle_H +  \langle P_t z,A z \rangle_H + \langle P_t Bz,B z \rangle_H \} = 0.
}
Furthermore, using the boundary condition $u(0)=\c$ together with integration by part, we have
\bes{
    \label{eq:paz}
    \langle P_t z,A z \rangle_H &= (P_t z)_0 (Az)_0 + \int_{-d}^0 (P_t z)_1(s) (Az)_1(s) ds \\
    & = bu(-d) \left(P_{11}(t)x + \int_{-d}^0 P_{12}(t,s)u(s) ds \right) + \c x P_{12}(t,0) - u(-d) x P_{12}(t,-d) \\  
    & \;\;\;\; - x \int_{-d}^0 \partial_s P_{12}(t,s)  u(s) ds + \c \int_{-d}^0 P_{22}(t,0,s)u(s)ds   \\
    & \;\;\;\; - u(-d) \int_{-d}^0 P_{22}(t,-d, s)u(s)ds  - \int_{-d}^0 \int_{-d}^0 \partial_s P_{22}(t,s,r) u(s)u(r) ds dr  \\
    & \;\;\;\; + \c^2 P_{\hat{22}}(t,0) -u(-d)^2 P_{\hat{22}}(t,-d) - \int_{-d}^0 \partial_s P_{\hat{22}}(t,s)ds, \\
}
and
\bes{
    \label{eq:pbz}
    \langle P_t B z,B z \rangle_H &= \sigma^2 P_{11}(t) u(-d)^2.
}
\begin{remark}
    In \eqref{eq:paz}, along with the integration by part, formulas such as $u \partial_s u = \partial_s u^2$ were (formally) used. However, as it appears in the verification Theorem \ref{T:verif}, the feedback optimal control obtained is as regular as the  controlled process $X^\c$ and thus as regular as the Brownian motion $W$. This is why our approach is only heuristic and justifies the need for the verification Theorem $\ref{T:verif}$.
\end{remark}
As a consequence, the minimizer of the Hamiltonian in \eqref{eq:hjb_bis} reads
\bes{
    \label{eq:optim_c}
    \c^*(t,z) = -\frac{1}{P_{\hat{22}}(t,0)} \left( x P_{12}(t,0) + \int_{-d}^0 P_{22}(t,0,s)u(s)ds  \right).
}
\begin{remark}
    Note that when $d \to 0$, then $\alpha^*(t,z) \to -\frac{b}{\sigma^2}x$ which agrees with the optimal strategy in the undelayed case.
\end{remark}

Combining \eqref{eq:hjb_bis}, \eqref{eq:paz} and \eqref{eq:pbz} yields the set of Riccati partial differential equations
\begin{align}
    \label{eq:a}
	     &\dot{P}_{11}(t)  = \frac{P_{12}(t,0)^2}{P_{\hat{22}}(t,0)}, &&
	     (\partial_t-\partial_s)(P_{12})(t,s) = \frac{P_{12}(t,0)P_{22}(t,s, 0)}{P_{\hat{22}}(t,0)},\\
	     &(\partial_t - \partial_s)(P_{\hat{22}})(t,s)=0, &&
	    (\partial_t - \partial_s-\partial_r)(P_{22})(t,s,r) = \frac{P_{22}(t,s,0)P_{22}(t,0,r)}{P_{\hat{22}}(t,0)},
\end{align}
accompanied by the boundary conditions 
\begin{align}
    \label{eq:b}
	      &P_{12}(t,-d) = b P_{11}(t), &&
	      P_{\hat{22}}(t,-d) = \sigma^2 P_{11}(t), \\
	      &P_{22}(t,s, -d)= b P_{12}(t,s), && P_{22}(t,-d, r)= b P_{12}(t,r),
\end{align}
and the final conditions
\begin{align}
    \label{eq:c}
	      &P_{11}(T) = 1, &&
	        P_{12}(T,s) = P_{\hat{22}}(T,s) = P_{22}(T,s,r)=0,
\end{align}
for almost every $s,r \in [-d,0]$. 
\begin{remark}
    Looking at the expression \eqref{eq:optim_c}, we can already guess some effects of the existence of a delay on the optimal strategy. Indeed, from \eqref{eq:b} one notes that $P_{12} \approx b$, $P_{\hat{22}} \approx \sigma^2$, ${P}_{22} \approx b^2$, and we may write 
    \bes{
        \c \approx \frac{-1}{\sigma^2} (bx + d b^2 \alpha) \approx \frac{-bx}{\sigma^2(1+d (b/\sigma)^2)}.
    }
    In Section \ref{section:nn}, we illustrate numerically the various effects of the delayed control through two examples of Markowitz portfolio allocation with execution delay.
\end{remark}
Note that due to the existence of the delay, the value function is independent of the control chosen after $T-d$, so that $P_{12}(t,s) = P_{\hat{22}}(t,s) =P_{22}(t,s,r)=0$ whenever $t+s \geq T-d$ or  $t+r \geq T-d$. Similarly, the optimal control defined in \eqref{eq:optim_c} is ill defined on $[T-d,T]$ so we decide to set to zero the control after time $T-d$ and rewrite 
\bes{
    \label{eq:optim_c_}
    \c^*(t,z) = -\frac{1_{t\leq T-d}}{P_{\hat{22}}(t,0)} \left( x P_{12}(t,0) + \int_{-d}^0 P_{22}(t,0,s)u(s)ds  \right).
}

Thus, to make sense of the set of Ricatti partial differential equations \eqref{eq:a}-\eqref{eq:b}-\eqref{eq:c} and the optimal control \eqref{eq:optim_c_}, we adopt the convention $0^2/0 = 0$ and define the concept of solution as follows 

\begin{definition}
\label{def:sol_E_i_simplified}
 A 4-uplets $P = (P_{11}, P_{12}, P_{\hat{22}}, P_{22})$ is said to be a solution to \eqref{eq:a}-\eqref{eq:b}-\eqref{eq:c} if $P_{11} : [0,T] \mapsto \R$, $P_{22},  P_{\hat{22}} : [0,T]\times [-d, 0] \mapsto \R$ and $P_{22} : [0,T]\times [-d, 0]^2 \mapsto \R$  are piecewise absolutely continuous functions satisfying \eqref{eq:a}-\eqref{eq:b}-\eqref{eq:c} with $P_{\hat{22}}(t) >0$ for any $t <  T-d$.
\end{definition}

The reason we chose \textit{piecewise} absolutely continuous functions as our set  of functions is because we expect the kernel $P$ to be discontinuous. To illustrate this consideration, cut the domain $\mathcal{D}$ into three pieces $\mathcal{D} = [0,T] \times [-d, 0]^2 = \Dcal_a \cup \Dcal_b \cup \Dcal_c$ as represented in Figure \ref{fig:domain}, with
 \bes{
 \mathcal{D}_{a} &= [0,T-d] \times [-d, 0]^2, \\
 \mathcal{D}_{b} &= \{(t,s,r) \in \Dcal \quad \text{s.t.} \quad t > T-d , \quad t + s \vee r < T-d \},  \\
 {\mathcal{D}_c} &= \{(t,s,r) \in \Dcal \quad \text{s.t.} \quad t > T-d , \quad t + s \vee r \geq T -d \}  \\
 }
 and note that, necessarily, $P_{12}, P_{\hat{22}}$ and $P_{22}$ are null on $\Dcal_c$ but not on the remaining domain, see also the numerical simulations in  Figure \ref{eq:kernels}.
\newcommand\pgfmathsinandcos[3]{% 
  \pgfmathsetmacro#1{sin(#3)}% 
  \pgfmathsetmacro#2{cos(#3)}% 
}
\begin{figure}[!h]
\begin{minipage}{0.48\textwidth}
     \centering
     \begin{tikzpicture}[scale=0.62]
        \draw[thick,-stealth,black, ->] (0.,-1.0) -- (0.,10.) node[above] {$t$}; 
        \draw[thick,-stealth,black, ->] (1.,0.) -- (-5.,0) node[below] {$s$};
        \draw (0., 0.) node[below left] {$0$};
    
        \draw (-3., -0.1) node[below] {\footnotesize $-d$};
        \draw (0., 6.) node[right] {\footnotesize $T-d$};
        \filldraw[color=black, fill=black!9, thick] (0,0) rectangle (-3.,6.);
        \filldraw[color=blue, fill=blue!9, thick] (0.,6.) -- (-3.,8.) -- (-3.,6.) -- cycle;
        \filldraw[color=red, fill=red!9, thick] (-3., 8.) -- (0.,8.) -- (0.,6.) -- cycle;
    
        \draw (0., 8.) node[right] {\footnotesize $T$};
        \draw (-2., 6.67) node[] {\footnotesize $\mathcal{D}_b$};
        \draw (-1., 7.33) node[] {\footnotesize ${\mathcal{D}_c}$};
        \draw (-1.5, 3.) node[] {\footnotesize $\mathcal{D}_{a}$};
        
        \draw [color=black, dashed] (0.,4.) -- (-3.,4.);
        \draw (0., 4.) node[right] {\footnotesize $T-2d$};
    \end{tikzpicture}
   \end{minipage}\hfill
   \begin{minipage}{0.48\textwidth}
     \centering
     \begin{tikzpicture}[scale=0.58] 

        \pgfmathsetmacro\AngleFuite{135}
        \pgfmathsetmacro\coeffReduc{.8}
        \pgfmathsetmacro\clen{2}
        \pgfmathsinandcos\sint\cost{\AngleFuite}
        
        \begin{scope} [x     = {(\coeffReduc*\cost,-\coeffReduc*\sint)},
                       y     = {(1cm,0cm)}, 
                       z     = {(0cm,1cm)}]

        \draw[thick,-stealth,black]  (0,0,0)  -- (0,0,9) node[above] {$t$};
        \draw[thick,-stealth,black]   (0,0,0)  -- (3,0,0) node[below left] {$s$};
        \draw[thick,-stealth,black] (0,0,0)  -- (0,3,0) node[right] {$r$};
        \coordinate (O) at (0,0,0); % origine
        \coordinate (A) at (0,2,0); % fond droite bas 
        \coordinate (B) at (2,2,0); % devant droite bas 
        \coordinate (C) at (2,0,0); % devant gauche bas
        
        % T
        \coordinate (H) at (0,0,8); 
        \coordinate (I) at (2,0,8); 
        \coordinate (J) at (2,2,8); 
        \coordinate (K) at (0,2,8); 
        
        % T-d
        \coordinate (D) at (0,0,5.2); % fond gauche haut
        \coordinate (E) at (2,0,5.2); % devant gauche haut
        \coordinate (F) at (2,2,5.2); % devant droite haut
        \coordinate (G) at (0,2,5.2); % fond droite haut
        
        % T-2d
        \coordinate (L) at (0,0,3); 
        \coordinate (M) at (2,0,3); 
        \coordinate (N) at (2,2,3); 
        \coordinate (P) at (0,2,3);

        \draw[black,fill=black!9,opacity=0.8, thick] (D) -- (E) -- (F) -- (G) -- cycle;
        
        \draw[black,fill=black!9,opacity=0.8, thick] (C) -- (B) -- (F) -- (E) -- cycle;
        \draw[black,fill=black!9,opacity=0.8, thick] (B) -- (A) -- (G) -- (F) -- cycle;
        \draw[color=black, dashed] (L) -- (M) -- (N) -- (P) -- cycle;
        
        \draw[blue, fill=blue!9, opacity=0.8, thick] (J) -- (E) -- (F);
        \draw[blue, fill=blue!9, opacity=0.8, thick] (J) -- (F) -- (G);
        
        \draw[red,  fill=red!9, opacity=0.8, thick] (H) -- (K) -- (J) -- (I) -- cycle;
        \draw[red, fill=red!9 ,opacity=0.8] (I) -- (E) -- (J) -- cycle;
        \draw[red, opacity=0.8] (D) -- (E) -- (J) -- cycle;
        \draw[red, opacity=0.8] (G) -- (D) -- (J) -- cycle;
        \draw[red, opacity=0.8, thick] (E) -- (J);
        \draw[red, opacity=0.8, thick] (G) -- (J);
        \draw[red, fill=red!9 ] (J) -- (G) -- (K) -- cycle;
        
        \draw[red, thick] (I) -- (J);
        \draw[red, thick] (J) -- (K);
        \draw[black, thick] (E) -- (F);
        \draw[black, thick] (F) -- (G);
        \draw[red, thick] (I) -- (E);
        \draw[red, thick] (K) -- (G);
        \draw[red] (J) -- (D);
        \draw[blue, thick] (J) -- (F);
        \draw[blue] (J) -- (D);
        \draw[blue] (E) -- (D);
        \draw[blue] (G) -- (D);

        \draw (C) node[above left] {\footnotesize $-d$};
        \draw (A) node[above right] {\footnotesize $-d$};
        \draw (O) node[above left] {$0$};
        \draw (I) node[left] {\footnotesize $T$};
        \draw (E) node[left] {\footnotesize $T-d$};
        \draw (M) node[left] {\footnotesize $T-2d$};
        % Domains
        \draw[black] (1.5, 0.78, 2) node[] {\footnotesize $\mathcal{D}_{a}$};
        \draw[black] (0., 0.5, 7.5) node[] {\footnotesize ${\mathcal{D}_c}$};
        \draw[black] (1.5, 2.2, 6.5) node[] {\footnotesize $\mathcal{D}_b$};
      \end{scope}  
    \end{tikzpicture}
  \end{minipage}
  \caption{Left: Cross section of $\Dcal$ along $r=0$. Right: full domain $\Dcal = \Dcal_a \cup \Dcal_b \cup \Dcal_c $. } \label{fig:domain}
\end{figure}
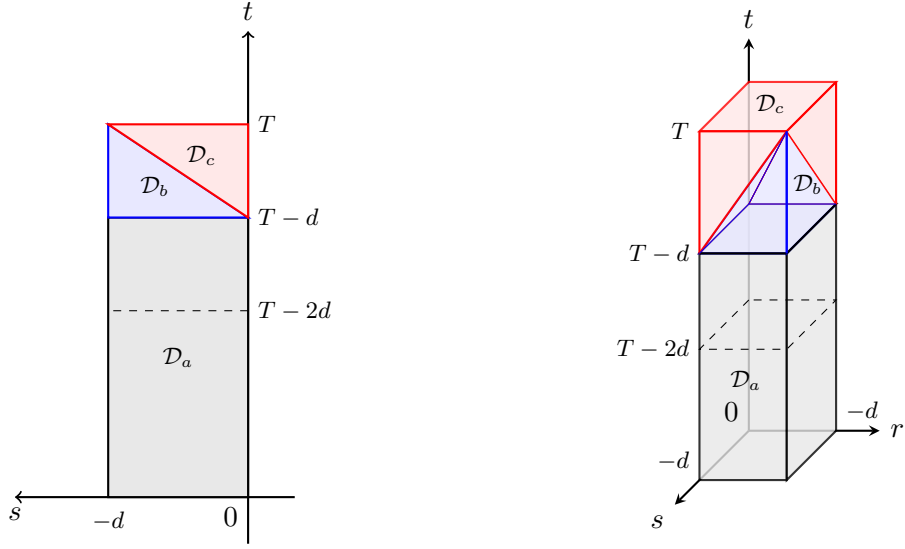

In the next section, we provide a proof of the existence and uniqueness of system \eqref{eq:a}-\eqref{eq:b}-\eqref{eq:c}, and a verification theorem yielding rigorously the optimal control and value of \eqref{eq:dynamic_heuri}-\eqref{eq:cost_functional_heuri}.

\section{Verification {and existence} results}
\label{section:verification_simplified}
In this section, we establish a verification result for the optimization problem \eqref{eq:dynamic_heuri}-\eqref{eq:cost_functional_heuri}.

\begin{theorem}[Verification Theorem]
\label{T:verif}
Assume that 
\begin{enumerate}
    \item \label{T:verif:i_simplified}  There exists a solution $P$ to \eqref{eq:a}-\eqref{eq:b}-\eqref{eq:c} in the sense of Definition  \ref{def:sol_E_i_simplified}, 
    \item \label{T:verif:ii_simplified} The control strategy defined as 
    \bes{
    \label{eq:optimal_control_final_simplified}
    \c^*_t =&\frac{-1_{t \leq T-d}  }{P_{\hat{22}}(t,0)} \bigg\{ X^{\c^*}_t P_{11}(t,0) + \int_{t-d}^t  P_{22}(t, 0, s-t) \c^*_s ds \bigg\}.
}
where $X^{\c^*}$ denotes the controlled state is an admissible control.
\end{enumerate}
Then the optimization problem \eqref{eq:dynamic_heuri}-\eqref{eq:cost_functional_heuri} admits \eqref{eq:optimal_control_final_simplified} as an optimal feedback control. Furthermore, for $z=(x,\gamma) \in H$, the value is given by \bes{
    \label{eq:value}
    V(z) &= P_{11}(0) x^2 +  2 x  \int_{-d}^0 P_{12}(0, s)  \gamma_s ds   + \int_{-d}^0  P_{\hat{22}}(0, s) \gamma^2_s ds \\
    &\;\;\;\;+\int_{[-d, 0]^2} \gamma_s\gamma_u P_{22}(0, s, r)  ds dr
    \\
    &= \langle P_0 z, z \rangle_H.
}
\end{theorem}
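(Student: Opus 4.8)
We sketch the argument; it is a classical verification scheme, but carried out directly on the explicit kernel form of $P$ rather than through an abstract Hilbert-space It\^o formula (which would require semigroup machinery for the unbounded generator $A$). Fix an admissible control $\alpha\in\Acal$ with state $X^\alpha$ started from $z=(x,\gamma)$ at time $0$, recall $u_t(s)=\alpha_{t+s}$, and set $Y_t:=\langle P_tZ^\alpha_t,Z^\alpha_t\rangle_H$, i.e.
\begin{equation*}
\begin{split}
Y_t &= P_{11}(t)(X_t^\alpha)^2 + 2X_t^\alpha\int_{t-d}^t P_{12}(t,s-t)\alpha_s\,ds \\
&\quad + \int_{t-d}^t P_{\hat{22}}(t,s-t)\alpha_s^2\,ds + \int_{[t-d,t]^2} P_{22}(t,s-t,r-t)\alpha_s\alpha_r\,ds\,dr .
\end{split}
\end{equation*}
First I would compute $dY_t$. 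The terms carrying the factor $X_t^\alpha$ are handled by It\^o's formula applied to the continuous semimartingale $dX_t^\alpha=\alpha_{t-d}(b\,dt+\sigma\,dW_t)$; the three ``memory'' integrals are of finite variation (continuous) in $t$, hence have no Brownian part and no cross-variation with $X^\alpha$, so their $t$-differentials come from the Leibniz rule for integrals over the moving windows $[t-d,t]$ and $[t-d,t]^2$. The Leibniz rule produces precisely (i) the transport derivatives $(\partial_t-\partial_s)P_{12}$, $(\partial_t-\partial_s)P_{\hat{22}}$, $(\partial_t-\partial_s-\partial_r)P_{22}$ of the kernels, which I substitute using the Riccati equations \eqref{eq:a}, and (ii) boundary evaluations at the window endpoints, namely the kernels with a second/third argument equal to $0$ or $-d$; the endpoint terms at $-d$, which carry factors $\alpha_{t-d}$ and $\alpha_{t-d}^2$, are then matched with the It\^o drift and quadratic-variation terms via the boundary conditions \eqref{eq:b}.

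After collecting all contributions, the $dt$-part of $Y$ splits into the quantity that the Riccati system was designed to annihilate (which therefore vanishes) plus a nonnegative remainder, so that
\[
dY_t = 1_{t\le T-d}\,P_{\hat{22}}(t,0)\,\bigl(\alpha_t-\alpha^\star_t\bigr)^2\,dt + dM_t ,
\]
where $\alpha^\star_t:=\alpha^*(t,Z_t^\alpha)$ is the feedback map \eqref{eq:optim_c_} evaluated at the current extended state and $M$ is the stochastic integral $2\int_0^{\cdot}\bigl(P_{11}(s)X_s^\alpha+\int_{s-d}^s P_{12}(s,r-s)\alpha_r\,dr\bigr)\sigma\alpha_{s-d}\,dW_s$. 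Square-integrability of this integrand, hence that $M$ is a true martingale, follows from admissibility ($\E[\sup_{t\le T}|X_t^\alpha|^2]<\infty$, $\alpha\in L^2_\F$) together with boundedness of the kernels on the compact domain (they are piecewise absolutely continuous, vanish on $\Dcal_c$, and satisfy $P_{\hat{22}}(\cdot,0)>0$ on $[0,T-d)$). The final conditions \eqref{eq:c} give $Y_T=(X_T^\alpha)^2$ and $Y_0=\langle P_0z,z\rangle_H$, so integrating on $[0,T]$ and taking expectations yields
\[
J(\alpha)=\E[(X_T^\alpha)^2]=\langle P_0z,z\rangle_H+\E\Bigl[\int_0^{T-d} P_{\hat{22}}(t,0)\,(\alpha_t-\alpha^\star_t)^2\,dt\Bigr]\ \ge\ \langle P_0z,z\rangle_H ,
\]
with equality iff $\alpha_t=\alpha^*(t,Z_t^\alpha)$ for a.e.\ $t\le T-d$. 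By assumption~\ref{T:verif:ii_simplified} the control $\alpha^*$ of \eqref{eq:optimal_control_final_simplified} is admissible and, by construction, a fixed point of this relation, hence optimal; thus $V(z)=J(\alpha^*)=\langle P_0z,z\rangle_H$, and expanding the inner product against the kernel form of $P_0$ gives \eqref{eq:value}. (The same computation starting from an arbitrary $(t,z)$ shows $V(t,z)=\langle P_tz,z\rangle_H$, so the feedback is optimal for every initial time.)

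The step I expect to be the real obstacle is the rigorous justification of the $dY_t$ computation: one differentiates a functional that couples a continuous It\^o process with integrals of the merely $L^2$ control over windows with moving endpoints, against kernels that are only \emph{piecewise} absolutely continuous. The Leibniz rule must therefore be applied piece by piece over $\Dcal_a,\Dcal_b,\Dcal_c$, and one must check that the jumps of the kernels across the interfaces contribute nothing (they sit on sets that are null for the relevant window integrals, or the kernels agree from both sides there); a density argument is also needed to pass from smooth $\alpha$ to general $\alpha\in L^2_\F$. A secondary difficulty is the behaviour near $t=T-d$, where $P_{\hat{22}}(t,0)$ may degenerate and $\alpha^*$ may blow up: this is exactly why assumption~\ref{T:verif:ii_simplified} is imposed, and the cutoff $1_{t\le T-d}$ lets $[0,T-d]$ and $(T-d,T]$ be treated separately so the degeneracy never enters the inequality.
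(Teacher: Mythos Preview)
Your approach is essentially the paper's: define $Y_t=\langle P_tZ^\alpha_t,Z^\alpha_t\rangle_H$, differentiate via It\^o on the $X^\alpha$-part and Leibniz on the moving-window integrals, substitute the Riccati relations \eqref{eq:a}--\eqref{eq:b}, complete the square, and integrate. The paper records the nine resulting coefficient terms $\textbf{1}_t,\dots,\textbf{9}_t$ explicitly, but the content of the computation is exactly what you describe.

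There is, however, one technical step where your argument is too quick. You assert that $M$ is a \emph{true} martingale because the integrand is square-integrable, and cite $\E[\sup_t|X_t^\alpha|^2]<\infty$, $\alpha\in L^2_\F$, and boundedness of the kernels. But the integrand is of the form $\bigl(c_1X_s^\alpha+c_2\!\int_{s-d}^s\alpha_r\,dr\bigr)\alpha_{s-d}$, so its square contains $|X_s^\alpha|^2\,\alpha_{s-d}^2$; the hypotheses give only that $\sup_s|X_s^\alpha|^2$ and $\int_0^T\alpha_s^2\,ds$ are each in $L^1(\Omega)$, which does \emph{not} imply that their product has finite expectation. The paper does not claim the stochastic integral is a true martingale: it treats $\int_0^\cdot Z_s^\alpha\,dW_s$ as a local martingale, introduces a localizing sequence $(\tau_k)_k$, takes expectations at time $T\wedge\tau_k$, and then passes $k\to\infty$ by dominated convergence on $\E[V^\alpha_{T\wedge\tau_k}]$ (using $\E[\sup_t|X_t^\alpha|^2]<\infty$ and $\alpha\in L^2_\F$ to dominate) and by monotone convergence on the $\int_0^{T\wedge\tau_k}P_{\hat{22}}(s,0)(\alpha_s-\Tcal(\alpha)_s)^2\,ds$ term (using $P_{\hat{22}}\ge0$). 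You should replace your ``true martingale'' step by this localization/DCT--MCT argument; the rest of your write-up is fine.
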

\begin{proof}
    The proof is a basic application of the martingale optimality principle, see \cite{el1981aspects}.    
   Let $\c \in \Acal$ and define 
   \bes{
   \label{eq:value_}
   V^\c_t &= P_{11}(t) (X_t^\c)^2 +  2 x  \int_{t-d}^t P_{12}(t, s-t)  \c_s ds   + \int_{t-d}^t  P_{\hat{22}}(t, s-t) \c^2_s ds \\
    &\;\;\;\;+\int_{[t-d, t]^2}  P_{22}(t, s-t, r-t) \c_s\c_r ds dr\\
    &=\langle P_t Z^\c_t, Z_t^\c \rangle_H.
   }
    An application of Itô's formula to \eqref{eq:value_} combined with differentiation under the integral symbol, authorized by the assumed boundedness of $P$ and its derivatives, yield
   \bes{
	d V_t^\c =& \Bigg\{\textbf{1}_t (X^\c_t)^2 +2 \bigg( \textbf{2}_t X_t^\c\c_t+ \textbf{3}_t X_t^\c\c_\td  +  X_t^\c \intd \textbf{4}_t(s) \c_s ds \bigg) \\
	&+ \textbf{5}_t \c_\td^2  +  \c_\td \textbf{6}_t(s) \intd \c_s ds + \c_t \intd \textbf{7}_t(s) \c_s ds \\
	&+   \intd \textbf{8}_t(s) \c^2_s ds + \intdd \textbf{9}_t(s,u) \c_s \c_u dsdu\Bigg\}dt + Z^\c_t dW_t,
	}
	%\bes{
	%d V_t =& \Bigg\{\textbf{1}_t X^2_t  + \textbf{2}_t X_\td^2 +  \intdd \textbf{3}_t(s,u) X_s X_u ds du + \intd \textbf{4}_t(s) X_s^2 ds  \\
	%&+ X_\td \intd \textbf{5}_t(s) X_s ds +  X_t \intd \textbf{6}_t(s) X_s ds + \textbf{7}_t X_t X_\td \\
	%&+2 \bigg( \textbf{8}_t X_t\c_t+ \textbf{9}_t X_t\c_\td+ \textbf{10}_t X_\td \c_{t} + \textbf{11}_t X_\td\c_\td \\
	%&+ \c_t \textbf{12}_t(s) \intd X_s ds + \intd\textbf{13}_t(s) X_s \c_s ds +  \intdd \textbf{14}_t(s,u) X_s \c_u ds du\\
	%&   +  X_t \intd \textbf{15}_t(s) \c_s ds +  \c_\td \intd \textbf{16}_t(s) X_s ds +  X_\td \intd \textbf{17}_t(s) \c_s ds \bigg) \\
	%&+  \textbf{18}_t  \c_t^2 + \textbf{19}_t \c_t \c_\td + \textbf{20}_t \c_\td^2  +  \c_\td \textbf{21}_t(s) \intd \c_s ds + \c_t \intd \textbf{22}_t(s) \c_s ds \\
	%&+   \intd \textbf{23}_t(s) \c^2_s ds + \intdd \textbf{24}_t(s,u) \c_s \c_u dsdu\Bigg\}dt + Z^\c_t dW_t,
	%}
	where we have set 
	\begin{align}
	    &\textbf{1}_t = \dot{P}_{11}(t) && \textbf{2}_t = P_{12}(t,0) 
	    \\
	    &\textbf{3}_t = 2b P_{11}(t) - P_{12}(t,-d) && \textbf{4}_t(s) =(\partial_t-\partial_s)(E_2)(t,s-t) 
	    \\
	    &\textbf{5}_t = \sigma^2 P_{11}(t) - P_{\hat{22}}(t,-d) && \textbf{6}_t(s) =\left( b P_{12}(t,\cdot) - P_{22}(t,\cdot, -d) \right) (s-t)
	    \\
	    &\textbf{7}_t(s) = P_{22}(t,0, s-t)  && \textbf{8}_t(s) = (\partial_t-\partial_s)(P_{\hat{22}})(t,s-t) 
	    \\
	    &\textbf{9}_t(s,r) = (\partial_t-\partial_s-\partial_r)(P_{22})(t,s-t, r-t) ,
	\end{align}
	and 
	\bes{
	    Z^\c_t = 2 \sigma {\c}_{t-d} \left(X_t^{\c} P_{11}(t) +\intd \c_s P_{12}(t,s-t) ds  \right).
	}
	Then, using the set of constraints \eqref{eq:a}-\eqref{eq:b}-\eqref{eq:c} together with \eqref{eq:optimal_control_final_simplified} and a completion of the square in $\c$ yield
	\bes{
	    dV^\c_t = \left(P_{\hat{22}}(t, 0)\left(\c_t - \Tcal(\c)_t\right)^2\right) dt +  Z^\c_t dW_t,
	}
	where $\Tcal(\c)$ is defined as 
	\bes{
	\Tcal(\c)_t =&-\frac{1_{t \leq T-d}}{P_{\hat{22}}(t,0)} \bigg\{ X^{\c}_t P_{12}(t,0) + \int_{t-d}^t \c_s P_{22}(t,0, s-t) ds \bigg\}, \qquad t \leq T.
	}
	Note that since the kernels $P_i$'s are bounded and the control $\c^*$ is assumed to be admissible, $\c^* \in L_{\F}^{2}([0,T], \R)$, $X^\c$ is continuous and the stochastic integral $\int_0^. Z_s^\c dW_s$ is well posed.
	%the stochastic integral $\int_0^. Z_s^\c dW_s$ is well posed since $X^\c$ is continuous  as $\c^*$ is assumed to be admissible, $\c^* \in L_{\F}^{2}([0,T], \R)$ and the $P_i$'s are bounded. 
	Furthermore it is a local martingale. Thus, there exists a localizing increasing sequence of stopping times $\{\tau_k\}_{k\geq 1}$ converging to $T$ such that $\int_0^{.\wedge \tau_k} Z_s^\c dW_s$ is a martingale for every $k \geq 1$. Then, for any $k\geq 1$
	\bes{
	   \E \left[ V^\c_{T \wedge \tau_k} \right] =& V_0^\c + \E \left[ \int_0^{T\wedge \tau_k} P_{\hat{22}}(s, 0)(\c_s - \Tcal(\c)_s)^2ds\right].
	}
	 Note that $t \mapsto V_t^\c$ is continuous since $P$ is bounded and $\c \in L^2_\mathbb{F}([-d, T], \R)$. Thus, an application of the dominated convergence theorem on the left term (recall that $\E\left[\sup_{t\leq T}|X_t^\c|^2\right]< \infty$, $\c \in L_{\mathbb{F}}^{2}([-d,T], \R)$ as $\c \in \mathcal{A}$) combined with the monotone convergence theorem on the right term yields, as $k \to \infty$

	\bes{
	   \E \left[V_{T}^\c \right] = \E[(X_T^\c)^2]  =  V_0^\c + \E \left[ \int_0^{T} P_{\hat{22}}(s, 0)(\c_s - \Tcal(\c)_s)^2ds\right].
	}
	Note that here we used the assumption $P_{\hat{22}}(t, 0) \geq 0$ on [0,T]. Since $P_{\hat{22}}$ is non-negative, we obtain that the optimal strategy is given by $\c^*$ and that the optimal value equals \eqref{eq:value}.
\end{proof}

\begin{proposition}
    \label{prop:admissibility}
    Assume that there exists a bounded 4-uplets $P$ solution to \eqref{eq:a}-\eqref{eq:b}-\eqref{eq:c} in the sense of Definition \ref{def:sol_E_i_simplified}. Then \eqref{eq:optimal_control_final_simplified} defines an admissible control. 
\end{proposition}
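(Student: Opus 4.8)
The plan is to construct the pair $(\c^*,X^{\c^*})$ by the \emph{method of steps} on the successive intervals $[0,d],[d,2d],\dots$, using the delay to decouple, on each step, the state $X^{\c^*}$ (which will be an explicit stochastic integral of already-known data) from the control $\c^*$ (which will solve a linear Volterra equation). Since $\c^*\equiv 0$ on $(T-d,T]$ by the indicator in \eqref{eq:optimal_control_final_simplified}, only $\lceil (T-d)/d\rceil$ steps are needed. For the base case, observe that on $[0,d]$ one has $t-d\in[-d,0]$, so $\c^*_{t-d}=\gamma(t-d)$ and the state is entirely determined by the data,
\[
X^{\c^*}_t = x + \int_0^t \gamma(s-d)\,(b\,ds+\sigma\,dW_s),\qquad t\in[0,d];
\]
by the Burkholder--Davis--Gundy and Cauchy--Schwarz inequalities and $\gamma\in L^2([-d,0],\R)$ this gives $\E[\sup_{t\le d}|X^{\c^*}_t|^2]<\infty$ and $X^{\c^*}\in L^2_{\F}([0,d],\R)$. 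Recall also that, $\c$ being given, \eqref{eq:dynamic_heuri} always \emph{has} the (unique) solution $X_t=x+\int_0^t\c_{s-d}(b\,ds+\sigma\,dW_s)$ as soon as $\c_{\fdot-d}\in L^2_{\F}([0,T],\R)$, so the only genuine requirement for admissibility is $\c^*\in L^2_{\F}([0,T],\R)$ together with the moment bound on $X^{\c^*}$.

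On $[0,d]$, relation \eqref{eq:optimal_control_final_simplified} then reads as a linear stochastic Volterra integral equation of the second kind,
\[
\c^*_t = f_t + \int_0^t \Kcal(t,s)\,\c^*_s\,ds,\qquad \Kcal(t,s)=-\frac{P_{22}(t,0,s-t)}{P_{\hat{22}}(t,0)},
\]
where $f_t$ gathers the (already known, $\F$-adapted, and square-integrable) contributions of $X^{\c^*}_t$ and of $\gamma$ restricted to $[t-d,0]$. Assuming $\Kcal$ bounded and $f\in L^2_{\F}([0,d],\R)$ (the point addressed below), a Picard iteration $\c^{(0)}=f$, $\c^{(n+1)}_t=f_t+\int_0^t\Kcal(t,s)\c^{(n)}_s\,ds$ converges in $L^2_{\F}([0,d],\R)$, every iterate is progressively measurable, hence so is the limit, which is the unique solution; a Gr\"onwall estimate on $\tau\mapsto \E\int_0^\tau|\c^*_s|^2ds$ gives $\|\c^*\|_{L^2_{\F}([0,d])}\le C\|f\|_{L^2_{\F}([0,d])}$. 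One now iterates: on $[kd,(k+1)d]$ the quantity $X^{\c^*}_t$ depends only on $\c^*$ restricted to $[(k-1)d,t-d]\subseteq[(k-1)d,kd]$ and on $X^{\c^*}_{kd}$, all already built, so it is again an explicit It\^o integral of known $L^2_{\F}$ data and inherits the $L^2_{\F}$ and $\E[\sup_t|\cdot|^2]$ bounds (BDG + Cauchy--Schwarz, using $X^{\c^*}_{kd}\in L^2(\Omega)$ from the previous step); and \eqref{eq:optimal_control_final_simplified} is again a Volterra equation for $\c^*$ on $[kd,(k+1)d]$ with the same bounded kernel and an $L^2_{\F}$ source, solved as above. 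After finitely many steps one reaches $T-d$, past which $\c^*\equiv 0$; concatenating gives $\c^*\in L^2_{\F}([0,T],\R)$.

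Finally, gluing the initial datum $\gamma$ on $[-d,0]$ to the constructed $\c^*$ yields $\c^*\in L^2_{\F}([-d,T],\R)$, and one application of BDG and Cauchy--Schwarz to $X^{\c^*}_t=x+\int_0^t\c^*_{s-d}(b\,ds+\sigma\,dW_s)$ gives $\E[\sup_{t\le T}|X^{\c^*}_t|^2]\le C\big(x^2+\E\int_{-d}^{T-d}|\c^*_s|^2ds\big)<\infty$, which is precisely admissibility. The one place where the hypotheses genuinely enter — and the main thing to nail down — is a uniform lower bound $P_{\hat{22}}(t,0)\ge c>0$ on $[0,T-d]$, needed to make $\Kcal$ bounded and $f$ square-integrable. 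I would obtain it as follows: the transport equation $(\partial_t-\partial_s)(P_{\hat{22}})=0$ with the boundary condition $P_{\hat{22}}(t,-d)=\sigma^2 P_{11}(t)$ gives $P_{\hat{22}}(t,0)=\sigma^2 P_{11}(t+d)$ for $t\in[0,T-d]$; the first Riccati equation $\dot P_{11}=P_{12}(t,0)^2/P_{\hat{22}}(t,0)\ge 0$ (and the convention $0^2/0=0$) shows $P_{11}$ is non-decreasing, so on $[0,T-d]$ one has $P_{\hat{22}}(t,0)\ge \sigma^2 P_{11}(d)=P_{\hat{22}}(0,0)$, which is $>0$ by the strict positivity required in Definition \ref{def:sol_E_i_simplified} applied at $t=0<T-d$.
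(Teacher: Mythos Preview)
Your proof is correct and takes a genuinely different route from the paper's. The paper does \emph{not} use the method of steps: it sets up a single global fixed-point argument on $L^2_{\F}([0,T],\R)$, equipped with the weighted norm $\|a\|_{2,\lambda}^2=\E\int_0^T e^{-\lambda s}|a_s|^2 ds$, and shows that the full map
\[
a\mapsto \phi(a)_t=-\frac{1_{t\le T-d}}{P_{\hat{22}}(t,0)}\Big\{P_{12}(t,0)\Big(x+\int_0^t \hat a_{s-d}(b\,ds+\sigma\,dW_s)\Big)+\int_{t-d}^t \hat a_s P_{22}(t,0,s-t)\,ds\Big\}
\]
is a contraction once $\lambda$ is taken large enough (the It\^o part is handled by BDG and the convolution part by Jensen, each producing a factor $\lesssim \lambda^{-1}$). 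Your approach instead exploits the delay structurally: on each window $[kd,(k+1)d]$ the state $X^{\c^*}$ is an explicit It\^o integral of data from the \emph{previous} window, so the feedback relation collapses to an ordinary linear Volterra equation for $\c^*$ with bounded kernel. This is more elementary (no weighted norm, just finitely many classical Volterra steps) and makes the decoupling role of the delay very transparent; the paper's approach is more compact and would adapt more readily to, say, a distributed delay where no clean step decomposition is available. One point you actually handle more carefully than the paper: you justify the uniform lower bound $P_{\hat{22}}(t,0)\ge c>0$ on $[0,T-d]$ via $P_{\hat{22}}(t,0)=\sigma^2 P_{11}(t+d)$ and the monotonicity of $P_{11}$, whereas the paper's proof simply uses $\sup|P_{12}/P_{\hat{22}}|$ and $\sup|P_{22}/P_{\hat{22}}|$ as finite constants without further comment.
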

\begin{proof}
  Let $\gamma \in L^2([-d,0], \R)$. To prove the claim, note that it suffices to show that the equation
  
  \bec{
    \label{eq:alpha_pt_fix}
    \c_t &=\frac{-1_{t \leq T-d}}{P_{\hat{22}}(t,0)} \bigg\{ P_{12}(t,0) \left(x + \int_0^t \c_{s-d}(b ds + \sigma dW_s) \right) + \int_{t-d}^t \c_s P_{22}(t, 0, s-t) ds \bigg\},
    \\
    \c_s &= \gamma_s, \qquad s \in [-d, 0],
  }
  admits a solution in $L_{\F}^{2}([0,T], \R)$ and that the process $X$, then defined as 
  \bes{
  \label{eq:X}
  X_t = x + \int_0^t \c_{s-d}(b ds + \sigma dW_s), \qquad t \leq T,
  }
  satisfies $\E\left[\sup_{t\leq T}|X_t|^2\right]< \infty$. To prove the first point, consider the linear operator $\phi$ on $L_{\F}^{2}([0,T], \R)$ defined as, for any $a \in L_{\F}^{2}([0,T], \R)$
  \bes{
    \phi(a)_t = \frac{-1}{P_{\hat{22}}(t,0)} \bigg\{ P_{12}(t,0) \left(x + \int_0^t \hat a_{s-d}(b ds + \sigma dW_s) \right) + \int_{t-d}^t \hat a_s P_{22}(t, 0, s-t) ds \bigg\},
  }
  where $\hat a_t = 1_{t \leq 0} \gamma_t + 1_{t > 0} a_t$. For $\lambda \leq 0$, we endow $L_{\F}^{2}([0,T], \R)$ with the norm $\|a\|_{2,\lambda} = \sqrt{ \int_0^T e^{-\lambda s} |a_s|^2 ds}$. Then, for any $a,  a' \in L_{\F}^{2}([0,T], \R)$, we have 
  \bes{
  \label{eq:pt_fix}
   \|\phi(a) - \phi( a')\|^2_{2,\lambda} =& \E \left[\int_0^T e^{-\lambda s}|\phi(a)_s - \phi( a')_s|^2 ds \right] 
   \\
   \leq & 2 (\textbf{I} + \textbf{II}).
  }
An application of Jensen’s inequality on the normalised measure $\frac{dr}{s}$ on $[0,s]$, combined with $\frac{1 - e^{-\lambda(T-r)}}{\lambda} \leq (1 \vee T ) (1 \wedge \lambda^{-1})  $, and the Burkholder-Davis-Gundy inequality lead to 
\bes{
    \textbf{I} \leq & \E \left[ \int_0^T e^{-\lambda s} \left| \frac{ P_{12}(s,0) }{P_{\hat{22}}(s,0)} \int_0^s ( \hat a_{r-d}- \hat a_{r-d}')(b dr + \sigma dW_r)  \right|^2  ds \right]
    \\
    \leq & \sup_{s \leq  T} \left| \frac{P_{12}(s,0) }{P_{\hat{22}}(s,0)} \right|^2 \Bigg\{ \E \left[\int_0^T e^{-\lambda s} \left( \int_0^s (\hat a_{r-d}- \hat a'_{r-d}) b dr\right)^2 ds\right] \\
    & + \E \left[\int_0^T e^{-\lambda s} \left( \int_0^s (\hat a_{r-d}- \hat a'_{r-d}) \sigma dWr\right)^2 ds\right] \Bigg\}
    \\
    \leq & c  (1 \wedge \lambda^{-1}) \E \left[ \int_0^T e^{-\lambda r } (\hat a_{r-d}- \hat a'_{r-d})^2  dr \right],
}
where $c >0$ depends only on $b, \sigma, T$ and $\sup_{s \leq T} \left| \frac{P_{12}(s,0) }{P_{\hat{22}}(s,0)} \right|$. Furthermore, we have
\bes{
    \textbf{II} \leq & \E \left[ \int_0^T e^{-\lambda s}  \left|  \frac{1}{P_{\hat{22}}(s,0)}  \int_{s-d}^s (\hat a_r - \hat a'_r) P_{22}(r, 0, s-r) dr  \right|^2  ds  \right] 
    \\
    \leq & \sup_{\substack{s \leq T\\ r \in [-d,0]}} \left| \frac{P_{22}(s,0, r) }{P_{\hat{22}}(s,0)} \right|^2
    \E \left[ \int_0^T e^{-\lambda s} \left|  \int_0^s (\hat a_r - \hat a'_r) dr \right|^2 ds \right] 
    \\
    \leq & \hat c  (1 \wedge \lambda^{-1}) \E \left[ \int_0^T e^{-\lambda r } (a_{r}- \hat a'_{r})^2  dr \right],
}
where $\hat c >0$ depends only on $T$ and $\sup_{\substack{s \leq T \\ r \in [-d,0]} } \left| \frac{P_{22}(s,0, r) }{P_{\hat{22}}(s,0)} \right|$. Consequently, \eqref{eq:pt_fix} reduces to
\bes{
     \|\phi(a) - \phi(a')\|^2_{2,\lambda} \leq  & (c + \hat{c}) (1 \wedge \lambda^{-1}) \| a -  a' \|^2_{2, \lambda}.
}
As a result, for $\lambda$ large enough, $\phi$ is a contraction on the Banach space  $\left( L_{\F}^{2}([0,T], \R), \| . \|_{2,\lambda} \right)$, thus proving the existence of $\c \in L_{\F}^{2}([0,T], \R) $ solution to \eqref{eq:alpha_pt_fix}. Finally, an application of Burkholder-Davis-Gundy's inequality to \eqref{eq:X} yields $\E \left[ \sup_{t \leq T} |X_t|^2 \right]$. The proof is thus complete.
\end{proof}
Next, we give a sufficient condition for the existence of $P = (P_{11}, P_{12}, P_{\hat{22}}, P_{22})$ in terms of $b, \sigma, d$ and $T$. Let $a=(a_n)_{n \geq 1}$ be the sequence defined as 
\bec{
    \label{def:a_n}
    a_0 &= 1, \\
    a_{n+1} &= a_n - \frac{d}{a_n} \left( \frac{b}{\sigma } \right)^2, \qquad n \geq 0.
}
Let us denote $\Ncal : (d, b, \sigma)  \mapsto  \inf\{ n \geq 1 : a_n > 0 \text{ and } a_{n+1} \leq 0 \}$. Clearly, $\Ncal$ is a well defined finite valued function on $\R^3$ whose image is not restricted to $\{0\}$.
\begin{proposition}
\label{prop:ricatti}
Assume $\Ncal(d, b, \sigma) \geq 2$ and $T < \Ncal(d, b, \sigma)d$ . Then \eqref{eq:a}-\eqref{eq:b}-\eqref{eq:c} has a unique solution in the sense of definition \ref{def:sol_E_i_simplified} on $[0,T]$ with $0<a_{\Ncal(d, b, \sigma)} \leq P_{11}(0) < 1$. 
% Furthermore, $E$ satisfies
% \begin{enumerate}
%     \item \label{i_propValue} For all $(t,s) \in \Dcal_a \cup \Dcal_b$, $E_1(t)>0$ and $E_2(t,s)>0$,
%     \item \label{ii_propValue} $E_i$ and its derivative are bounded for $i \in \{1,6,8,9\}$,
%     \item \label{iii_propValue} $t \mapsto V_t^\c$ is continuous for every $\c \in \mathcal{A}$,
% \end{enumerate}
\end{proposition}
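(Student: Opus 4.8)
The plan is to build the solution backward in time on the successive slabs $S_k:=\big[(T-kd)\vee 0,\,T-(k-1)d\big]$, $k=1,2,\dots$, exploiting the causal (transport) structure of \eqref{eq:a}--\eqref{eq:b}--\eqref{eq:c}: the operators $\partial_t-\partial_s$ and $\partial_t-\partial_s-\partial_r$ have characteristics that keep $t+s$ (resp. $t+s$ and $t+r$) constant, and every nonlinear right-hand side in \eqref{eq:a} involves only the traces $P_{12}(t,0)$, $P_{22}(t,\cdot,0)$, $P_{22}(t,0,\cdot)$ and the coefficient $P_{\hat{22}}(t,0)$. Integrating $(\partial_t-\partial_s)P_{\hat{22}}=0$ along the characteristic through $(t,s)$ and using $P_{\hat{22}}(t,-d)=\sigma^2P_{11}(t)$ and the terminal condition gives at once the closed form
\[
P_{\hat{22}}(t,s)=\sigma^2P_{11}(t+s+d)\,\mathbf 1_{\{t+s\le T-d\}},\qquad\text{in particular}\qquad P_{\hat{22}}(t,0)=\sigma^2P_{11}(t+d)\,\mathbf 1_{\{t\le T-d\}}.
\]
Hence on each $S_k$ the whole system is driven by data already constructed on $[T-(k-1)d,T]$. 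On the first slab $S_1$ the kernels vanish on $\Dcal_c$, so $P_{12}(t,0)=P_{\hat{22}}(t,0)=0$, whence $\dot P_{11}\equiv 0$ and $P_{11}\equiv 1$ there, while $P_{12},P_{\hat{22}},P_{22}$ are the explicit piecewise-constant solutions of the homogeneous transport equations matching the prescribed data.

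I would then run a joint induction on $k$. Assume a unique bounded nonnegative solution has been built on $[T-(k-1)d,T]$ with $P_{11}\ge a_{k-1}>0$ there. Writing each kernel through its characteristic representation turns \eqref{eq:a} on $S_k$ into a coupled system of Volterra-type integral equations: for $t+s\le T-d$,
\[
P_{12}(t,s)=bP_{11}(t+s+d)-\int_t^{t+s+d}\frac{P_{12}(t',0)\,P_{22}(t',\,t+s-t',\,0)}{\sigma^2P_{11}(t'+d)}\,dt',
\]
with analogous identities for $P_{22}$ (integrating from the faces $s=-d$ and $r=-d$, using $P_{22}(t,s,-d)=bP_{12}(t,s)$) and $P_{11}(t)=P_{11}(T-(k-1)d)-\int_t^{T-(k-1)d}P_{12}(t',0)^2\big/\big(\sigma^2P_{11}(t'+d)\big)\,dt'$. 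All couplings are one-sided, over a window of length $\le d$, and the denominators are bounded below by $\inf_{[T-(k-1)d,T]}P_{11}>0$, so the map sending a tuple of kernels to the right-hand sides of these identities is a contraction of the relevant Banach space in the exponentially weighted norm $\|f\|_{2,\lambda}^2=\int e^{-\lambda t}|f|^2$ for $\lambda$ large -- exactly the argument of Proposition \ref{prop:admissibility}. This produces the unique solution on $S_k$. Simultaneously the a priori bounds propagate: $\dot P_{11}=P_{12}(\cdot,0)^2/P_{\hat{22}}(\cdot,0)\ge 0$ makes $P_{11}$ nondecreasing with $P_{11}(T)=1$; the integral identities together with the nonnegativity of $P_{12},P_{22}$ (inherited from $S_1$) force $0\le P_{12}(t,s)\le bP_{11}(t+s+d)\le b$, $0\le P_{\hat{22}}\le\sigma^2$, $0\le P_{22}\le b^2$; hence on $S_{k+1}$ one gets $\dot P_{11}(t)\le b^2/(\sigma^2P_{11}(t+d))\le b^2/(\sigma^2P_{11}(T-kd))$, and integrating over the slab (length $\le d$),
\[
P_{11}\big((T-(k+1)d)\vee 0\big)\ \ge\ P_{11}(T-kd)-\frac{d}{P_{11}(T-kd)}\Big(\tfrac{b}{\sigma}\Big)^2.
\]
Since $x\mapsto x-\tfrac{d}{x}(b/\sigma)^2$ is increasing on $(0,\infty)$ and $P_{11}(T)=1=a_0$, this yields inductively $P_{11}(T-nd)\ge a_n$ as long as $a_n>0$, i.e. for $n\le\Ncal(d,b,\sigma)$; and $T<\Ncal(d,b,\sigma)d$ guarantees the only slabs ever reached have index $k\le\Ncal$, so $P_{11}\ge a_{\Ncal}>0$ on all of $[0,T]$ and $P_{\hat{22}}(\cdot,0)=\sigma^2P_{11}(\cdot+d)>0$ on $\{t<T-d\}$ -- the construction never degenerates and the convention $0^2/0=0$ is invoked only on $[T-d,T]$.

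Chaining the slabs gives a solution of \eqref{eq:a}--\eqref{eq:b}--\eqref{eq:c} in the sense of Definition \ref{def:sol_E_i_simplified} with $a_{\Ncal}\le P_{11}(0)\le 1$; the strict inequality $P_{11}(0)<1$ follows since $P_{11}$ is strictly increasing near $t=T-d$ (there $\dot P_{11}\to(b/\sigma)^2>0$ because $P_{12}(t,0)\to b$). Uniqueness is built in: any solution in the sense of Definition \ref{def:sol_E_i_simplified} must have this layered structure -- the positivity $P_{\hat{22}}>0$ on $\{t<T-d\}$ together with the characteristic identities force the closed form of $P_{\hat{22}}$, the vanishing of the kernels on $\Dcal_c$, and $P_{11}\equiv 1$ on $[T-d,T]$ -- after which the per-slab contraction pins all four kernels down uniquely.

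The main obstacle is this joint bound/contraction step: the sign and size estimates ($P_{12}\ge 0$, $P_{22}\ge 0$, $a_k\le P_{11}\le 1$) must be propagated slab by slab in tandem with the fixed point, since it is precisely they that feed the recursion for $P_{11}$ and thus make the threshold $T<\Ncal d$ appear. A second, purely bookkeeping, difficulty is tracking which of the faces $t=T$, $s=-d$, $r=-d$ a characteristic meets inside each of the three regions $\Dcal_a,\Dcal_b,\Dcal_c$ of Figure \ref{fig:domain}.
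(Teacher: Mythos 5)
Your overall architecture---backward slicing into slabs of length $d$, rewriting \eqref{eq:a}--\eqref{eq:b}--\eqref{eq:c} as Volterra-type integral equations along the characteristics, and propagating the bound $P_{11}(T-nd)\ge a_n$ via the recursion defining $(a_n)$---is exactly the route taken in Appendix \ref{appendix:fixed_point_banach}. However, the two steps where the real work lies are not sound as written. First, the per-slab fixed point: you claim the integral map is a contraction in an exponentially weighted norm, ``exactly the argument of Proposition \ref{prop:admissibility}''. That proposition concerns an equation which is \emph{affine} in the unknown $\c$, hence globally Lipschitz, and this is what makes the weighted-norm contraction work on the whole interval in one shot. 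Here the right-hand sides are \emph{quadratic} in the unknown kernels, so the map is only locally Lipschitz: you must restrict to a ball, and then either the ball is taken in the unweighted sup norm, in which case its invariance over a full slab of length $d$ requires a smallness of $d\,b^2/(\sigma^2 a_{k-1})$ that is not assumed, or it is taken in the weighted norm, in which case the local Lipschitz constant of the quadratic terms grows like $e^{\lambda d}$ and cancels the $1/\lambda$ gain. The paper circumvents this by a contraction on a small sub-interval (Lemma \ref{lemma:existence_first_slice}), an a priori Lipschitz estimate (Lemma \ref{lemma:lipschitz_first_slice}), and a continuation argument across the slab (Lemma \ref{L:slice_2}); some such local-existence-plus-continuation step is needed in your scheme as well.

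Second, the bound $|P_{12}(t,0)|\le |b|$ (your $0\le P_{12}\le bP_{11}(t+s+d)$, which moreover tacitly assumes $b>0$) is precisely what feeds the inequality $P_{11}(T-(n+1)d)\ge P_{11}(T-nd)-\tfrac{d}{a_n}(b/\sigma)^2=a_{n+1}$, i.e.\ it is where the hypothesis $T<\Ncal(d,b,\sigma)\,d$ actually bites. You assert it as nonnegativity ``inherited from $S_1$'' and yourself flag it as the main obstacle, but you give no argument; positivity of the terminal data does not persist automatically through the nonlinear integral equations, since the source terms enter with a minus sign. The paper proves it by showing that along each characteristic the function $x\mapsto P_{22}(x,t-x,0)$ solves a scalar Volterra equation with terminal value $b^2$ admitting a unique positive solution (a separate contraction in $C([t,T-d],\R)$), whence $\mathrm{sign}(P_{12}(\cdot,0))=\mathrm{sign}(b)$ and $|P_{12}(\cdot,0)|\le |b|$. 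Without this (or an equivalent) argument the chain $a_n\le P_{11}$, and with it the conclusion $P_{\hat{22}}>0$ on the next slab, is unsupported: the proposal has the right skeleton but leaves the two decisive estimates unproven.
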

\begin{proof}
   See appendix \ref{appendix:fixed_point_banach}.
\end{proof}
\begin{remark}
    Note that when $d=0$, the sufficient condition above reduces to $\sigma \neq 0$.
\end{remark}

Let us give some intuition as of why the delay feature induces the condition on the coefficients described above to ensure existence. We focus on the first slice $[T-2d, T-d]\times [-d, 0]$ of the domain, where the solution $P$ is not trivial.
%We will explain the criterion $T < \Ncal(d, b, \sigma)d$ of proposition \ref{prop:ricatti} in a graphical and heuristic way.
\begin{center}
\begin{minipage}{0.48\textwidth}
    First, note that since $P_{\hat{22}}$ is a transport of $P_{11}$ which takes the form $P_{\hat{22}}(t,s) = \sigma^2 P_{11}(t+s+d)1_{t+s+d \leq T}= \sigma^2 1_{t+s+d \leq T}$. On this slice, the kernel $P_{11}$ can be expressed in the following integral form
     \begin{equation}
     \label{eq:p11}
         %P_{11}(t)=1-\left(\frac{b}{\sigma}\right)^2 \underbrace{\int_t^{T-d}\left({\blue\mathbf{ \frac{P_{12}(s,0)}{b} }}\right)^2 ds}_{\text{source term}}.
         P_{11}(t)=1-\left(\frac{b}{\sigma}\right)^2 \int_t^{T-d}\left({\blue\mathbf{ \frac{P_{12}(s,0)}{b} }}\right)^2 ds,
     \end{equation}
     for $t\in [T-2d, T-d]$. Looking at $P_{12}(\cdot,0)$, we have 
     \bes{
        P_{12}(t,0) &= b - \sigma^{-2} \int_{t}^{T-d} P_{12}(x,0)\\& \;\;\;\; \times \underbrace{P_{22}(x,t-x,0)}_{\lesssim b^2}dx, 
     }
     see also \eqref{eq:system_integral_form_first_slice}.
     On the right, we represent the value of the normalized kernel $P_{12}/b$ in the different areas of the domain $[T-2d, T]\times [-d, 0]$. If we visualize the evolution of the normalized kernel $P_{12}/b$ in a backward way on the slice $[T-2d, T-d]$, we see that this term is equal to a transport of its value on the boundary $\frac{P_{12}(T-d,s)}{b}=1$, represented by the blue arrows, minus the integral of a positive source term which is independent of $t\in [T-2d, T-d] \mapsto P_{11}(t)$. 
     
   \end{minipage}\hfill
   \begin{minipage}{0.48\textwidth}
        \centering
     \begin{tikzpicture}[scale=1]
        \draw[thick,-stealth,black, ->] (0.,-1.0) -- (0.,10.) node[above] {$t$}; 
        \draw[thick,-stealth,black, ->] (1.,0.) -- (-5.,0) node[below] {$s$};
        \draw (0., 0.) node[below left] {$0$};
    
        \draw (-3., -0.1) node[below] {\footnotesize $-d$};
        \draw (0., 6.) node[right] {\footnotesize $T-d$};
        \filldraw[color=black, fill=black!9] (0,0) rectangle (-3.,6.);
        \filldraw[color=blue, fill=blue!9, thick] (0.,6.) -- (-3.,8.) -- (-3.,6.) -- cycle;
        \filldraw[color=red, fill=red!9, thick] (-3., 8.) -- (0.,8.) -- (0.,6.) -- cycle;
    
        \draw (0., 8.) node[right] {\footnotesize $T$};
        \draw (-2., 6.67) node[] {\footnotesize {\blue $1$}};
        \draw (-1., 7.33) node[] {\footnotesize {\red $0$}};
        \draw (-1.5, 3.) node[] {\footnotesize $\mathcal{D}_{a}$};
        
        \draw [color=black, dashed] (0.,4.) -- (-3.,4.);
        \draw (0., 4.) node[right] {\footnotesize $T-2d$};
        
        %\draw [thick, green] (0.,4.) -- (-3.,4.);
        %draw [thick, green] (0.,4.) -- (-3.,6.);
        %\draw[thick,blue] (-3,6.) -- (-3.,4.);
        %\draw (-3, 5) node[left] {\footnotesize {\blue $\frac{P_{12}(t,-d)}{b}= P_{11}(t)$}};
        \draw[blue, ->] (-3,6) -- (0,4);
        \draw[blue, ->] (-2.5,6) -- (0,4.33);
        \draw[blue, ->] (-2,6) -- (0,4.66);
        \draw[blue, ->] (-1.5,6) -- (0,4.99);
        \draw[blue, ->] (-1,6) -- (0,5.32);
        \draw[blue, ->] (-0.5,6) -- (0,5.65);
        \draw[thick,blue] (0,6.) -- (0,4.);
        %\draw (0.2, 5) node[right] {\footnotesize {\blue $\frac{\mathbf{P_{12}(t,0)}}{b}$}};
       % \draw[blue] (0,5) ellipse (0.2cm and 1cm);
        
        \draw [decorate,decoration={brace,amplitude=4pt},xshift=0.5cm,yshift=0pt]
      (0,5.7) -- (0,4.3) node [midway,right,xshift=.1cm] { {\blue $\frac{\mathbf{P_{12}(t,0)}}{b} \lesssim 1$}};
      
    \end{tikzpicture}
\end{minipage}
\end{center}

Consequently, the delay $d>0$ makes the integral term in \eqref{eq:p11} independent of $t\in [T-2d, T-d] \mapsto P_{11}(t)$ and of the order of  $d\left(\frac{b}{\sigma}\right)^2$. If this quantity is too large, the kernel $P_{11}$ can then reach negative values, thus making $P_{\hat{22}}$ negative on the next slice $[T-3d, T-2d]$ and therefore preventing the system \eqref{eq:a}-\eqref{eq:b}-\eqref{eq:c} from having a solution. Repeating this argument from slice to slice of size $d$ in a backward manner induces the aforementioned sufficient condition. Note that these arguments break down when $d=0$. These arguments are precisely developed in Appendix \ref{appendix:fixed_point_banach}.

\section{Deep learning scheme}
\label{section:nn}
\subsection{A quick reminder of PINNs and Deep Galerkin method for PDEs}

In order to solve \eqref{eq:a}-\eqref{eq:b}-\eqref{eq:c}, we will make use of neural networks in the spirit of the emerging Physics Informed Neural Networks (PINNs) and Deep Galerkin literatures, see \cite{sirignano2018dgm} and \cite{raissi2019physics} to name just a few. We first recall some of the main ideas. Assume we have a nonlinear partial differential equation of the form 
\bes{
\label{eq:pde}
    \partial_t u + \Ncal(u) &= 0, \qquad \text{on } \Omega, \\
    u &= g, \qquad \text{on } \partial \Omega,
}
where $\Ncal$ is a nonlinear operator, $\Omega$ a bounded open subset and $g$ a function on the boundary of the domain. The  main idea is to approximate the solution $u$ to \eqref{eq:pde} by a deep neural network. Let us call $t \mapsto  u(t, \Theta)$ this network, where $\Theta$ and $t$ denote respectively its parameters and a generic element of $\Omega \cup \partial \Omega$. The goal is to find a $\Theta$ so that $t \mapsto  u(t, \Theta)$ satisfies \eqref{eq:pde}.  To do so, the idea is to proceed by minimizing the mean square error loss 
\bes{
    \Lcal(\Theta,  \Tcal) &= \Lcal_u(\Theta,  \Tcal) + \Lcal_f(\Theta,  \Tcal),
    \\
    \Lcal_u(\Theta,  \Tcal) &= \frac{1}{|\Tcal|} \sum_{t\in \Tcal} |(\partial_t + \Ncal){u}(t, \Theta) |^2 1_{t \in \Omega},
    \\
    \Lcal_f(\Theta,  \Tcal) &= \frac{1}{|\Tcal|} \sum_{t\in \Tcal} |{u}(t, \Theta) - g(t)|^2 1_{t \in \partial \Omega},
}
where $\Lcal_f$ is the loss associated with the initial and boundary constraints on $\partial \Omega$, $\Lcal_u$ the loss associated to the PDE constraint $\partial_t u + \Ncal(u) = 0$ on $\Omega$ and $\Tcal$ a random subset of $\partial \Omega \cup \Omega$.

\subsection{A tailor-made algorithm for Riccati partial differential equations }
Although \eqref{eq:a}-\eqref{eq:b}-\eqref{eq:c} naturally fits the framework of PINNs, we make use of the structure exhibited on the operator $P$ to build a tailor-made algorithm to approximate the system of Riccati transport PDEs \eqref{eq:a}-\eqref{eq:b}-\eqref{eq:c}. 

\textit{Step 1:} We define one neural network for each kernel $P_{11}, P_{12}, P_{\hat{22}}, P_{22}$, as described in Figure \ref{fig:nn_structure}. Note that usually a unique neural network is used as a surrogate to the function that is to be approximated. 
\begin{figure}[h]
\centering
        \includegraphics[width=.8\textwidth]{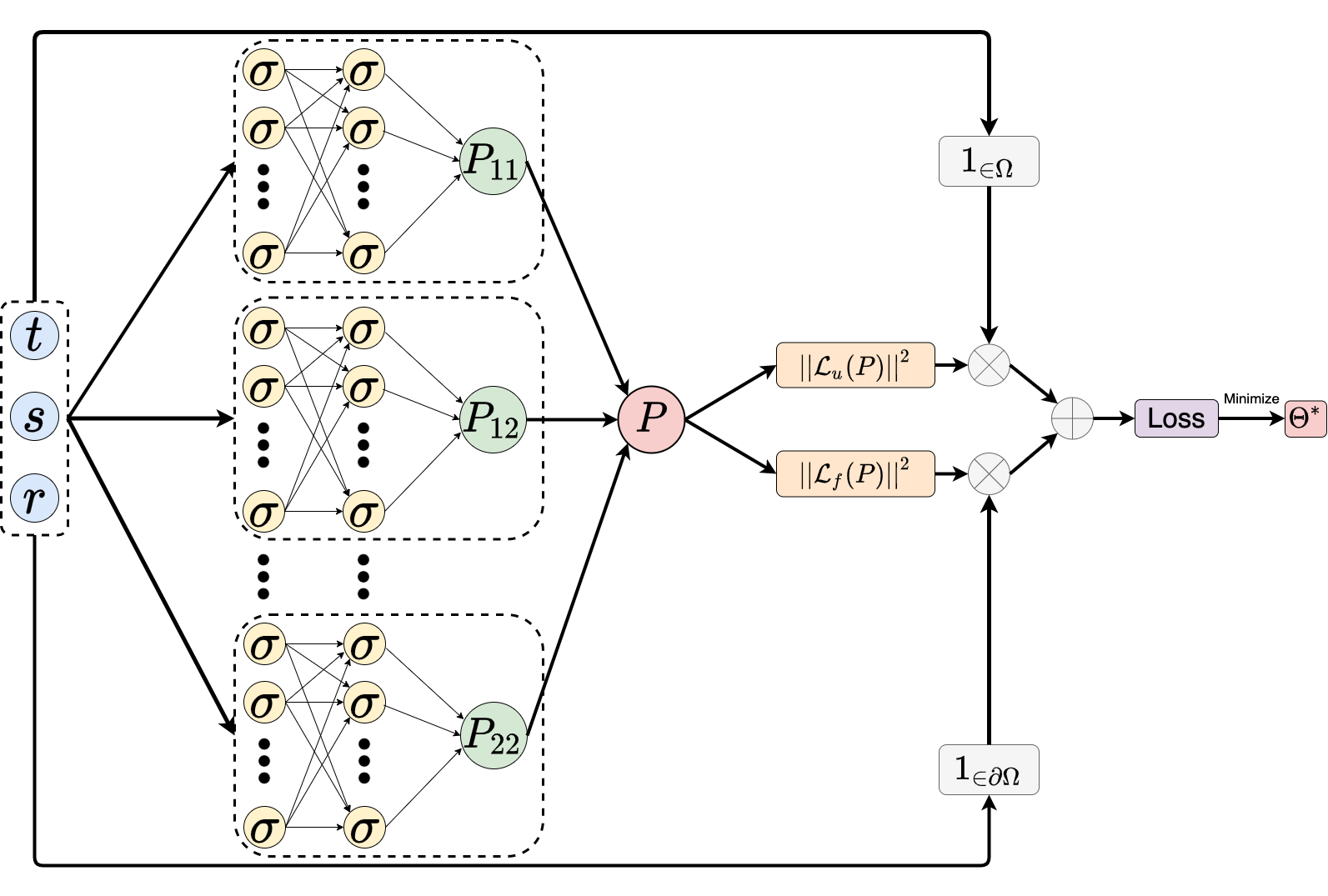}
         \caption{Structure of the model used to solve \eqref{eq:a}-\eqref{eq:b}-\eqref{eq:c}.}
    \label{fig:nn_structure}
\end{figure}

\textit{Step 2}: We build specific loss functionals for each of our neural networks. To illustrate this, consider for instance the constraint imposed on the derivative of  $t \mapsto P_{11}(t)$:
\bes{
\label{eq:constain_E_1}
    \dot{P}_{11}(t) &= \frac{P_{12}(t, 0)^2}{P_{\hat{22}}(t, 0)}, \qquad t \in [0,T].
}
Note here that, as in the  previous sections, we use the convention $0^2/0=0$. As a result \eqref{eq:constain_E_1} can be rewritten as 
\bes{
    \dot{P}_{11}(t) &= \frac{P_{12}(t, 0)^2}{P_{\hat{22}}(t, 0)}, \qquad t \leq T-d,\\
    \dot{P}_{11}(t) &= 0, \qquad t> T-d.
}
Thus, a natural contribution to the total loss function to enforce \eqref{eq:constain_E_1} would be 
\bes{
    \label{eq:sub_loss_1}
    \Lcal_{11}(\Theta_{11}, \Theta_{12}, \Theta_{\hat{22}} , \Tcal ) = \frac{1}{|\Tcal |} \sum_{t \in \Tcal } \left(  \partial_t {P}_{11}(\Theta_{11}, t)- \frac{ P_{12}(\Theta_{12},t, 0)^2}{P_{\hat{22}}(\Theta_{\hat{22}}, t, 0)}  \right)^2, \qquad \Tcal  \subset [0,T],
}
and the natural gradient descent step associated to the constraint \eqref{eq:constain_E_1} would be
\bes{
    \Theta_i \leftarrow& \Theta_i - \eta  \nabla_{\Theta_i} \Lcal_{11}(\Theta_{11}, \Theta_{12}, \Theta_{\hat{22}} , \Tcal), \qquad i \in \{11, 12, \hat{22}\},
}
 Consequently, the constraint \eqref{eq:constain_E_1} a priori entails the updating of $P_{11}, P_{12}$ and $P_{\hat{22}}$. In particular, it requires to compute the gradient of $(\Theta_{12}, \Theta_{\hat{22}}) \mapsto \sum_{t \in \Tcal} \frac{ P_{12}(\Theta_{12},t, 0)^2}{ P_{\hat{22}}(\Theta_{\hat{22}}, t, 0)}$ which is expected to be highly unstable as $t,s \mapsto P_{\hat{22}}(t,s)$ vanishes for $t+s \geq T-d$. To mitigate this issue, the term $t \mapsto \frac{ P_{12}(\Theta_{12},t, 0)^2}{P_{\hat{22}}(\Theta_{\hat{22}}, t, 0)}$ is considered as an exogenous source term for $P_{11}$ which is fixed when we train $P_{11}$. 
\bes{
    \dot{P}_{11}(t) &= \underbrace{\frac{P_{12}(t, 0)^2}{P_{\hat{22}}(t, 0)}}_{\text{Seen as a fixed exogenous source term when $P_{11}$ is trained}} \qquad, t \in [0,T].
}
To implement this idea, a second set of neural networks $(\tilde{P}_k( \tilde \Theta_k))_{k \in \{11,12,\hat{22}, 22\}}$ is initialized with $ \tilde \Theta_k = \Theta_k$ for $k \in \{11,12,\hat{22}, 22\}$ at initialization. These additional networks are then used as surrogates to the right-hand side source terms and will not be used for the computation of the gradients of the losses. They will only be updated at the end of each batch training. Consequently, the gradient descent scheme implemented for each batch $\Tcal$ is the following:
\bes{
\label{gradients}
&\text{Step 1}  \left \{
    \begin{array}{ll}
       \Theta_{11} &\leftarrow \Theta_{11} - \eta  \nabla_{\Theta_{11}} \left( \Lcal_{11}^r(\Theta_{11}, \tilde \Theta_{12}, \tilde \Theta_{\hat{22}}, \Tcal) + \Lcal^{f}_1(\Theta_{11}, \Tcal)  \right) \\
    \Theta_{12} &\leftarrow \Theta_{12} - \eta \nabla_{\Theta_{12}} \left( \Lcal^r_{12}(\Theta_{12}, \tilde \Theta_{\hat{22}} ,\tilde \Theta_{22} , \Tcal) + \Lcal^b_{12}(\tilde \Theta_{11}, \Theta_{12}, \Tcal)  + \Lcal^f_{12}(\Theta_{12}, \Tcal)  \right) \\
    \Theta_{\hat{22}} &\leftarrow \Theta_{\hat{22}} - \eta \nabla_{\Theta_{\hat{22}}} \left( \Lcal^r_{{\hat{22}}}(\Theta_{\hat{22}} , \Tcal) + \Lcal^b_{\hat{22}}(\tilde \Theta_{11}, \Theta_{\hat{22}} , \Tcal) + \Lcal^f_{\hat{22}}(\Theta_{\hat{22}}, \Tcal) \right) \\
    \Theta_{22} &\leftarrow \Theta_{22} - \eta \nabla_{\Theta_{22}} \left( \Lcal_4^r(\Theta_{\hat{22}}, \Theta_{22}, \tilde \Theta_{22}, \Tcal) + \Lcal^b_{22}(\Theta_{12}, \Theta_{22}, \tilde \Theta_{22}, \Tcal) +\Lcal_{22}^f(\Theta_{22}, \Tcal) \right), \\
    \end{array}
\right. \\
&\text{Step 2} \left \{ \tilde \Theta_k \leftarrow \Theta_k, \qquad k \in \{11,12,\hat{22},22 \},\right.
}
where the $\Lcal_i^r$'s stand for the \textit{residual loss}, the $\Lcal_i^b$'s stand for the \textit{boundary loss} and the $\Lcal_i^f$'s stand for the \textit{final loss}. The precise definitions of the losses are given below.

For each neural network $ P_k(\Theta_k)$, $k\in \{11,12,\hat{22},22\}$, a \textit{follower network} is initialized $\tilde P_k( \tilde \Theta_k)$, $k\in \{11,12,\hat{22},22\}$. These follower networks serve as surrogate for the source terms in \eqref{eq:a}-\eqref{eq:b}-\eqref{eq:c}. They condition the loss functionals that are used to train the $P_k$'s and are updated at the end of each batch as described in Algorithm \ref{algo:nn_training}. 

\noindent\textbf{Losses of $P_{11}$:}
\bes{
\label{loss_1}
    \Lcal^r_{11}(\Theta_{11}, \tilde \Theta_{12}, \tilde \Theta_{\hat{22}}) &= \frac{1}{|\Tcal|} \sum_{t \in \Tcal }\left(\partial_t P_{11}(t, \Theta_{11})  - \frac{\tilde P_{12}(t,0, \tilde \Theta_{12})^2}{\tilde P_{\hat{22}}(t,0, \tilde \Theta_{\hat{22}})} \right)^2,
    \\
    \Lcal^{f}_{11}(\Theta_{11}, \Tcal) &= \frac{1}{|\Tcal|} \sum_{t \in \Tcal } \left( \left(P_{11}(t, \Theta_{11})-1 \right) ^2 1_{t=T} \right) .
    \\
}
\noindent\textbf{Losses of $P_{12}$:}
\bes{
\label{loss_2}
    \Lcal^r_{12}(\Theta_{12}, \tilde \Theta_{\hat{22}}, \tilde \Theta_{22}, \Tcal) &= \frac{1}{|\Tcal|} \sum_{t,s \in \Tcal }\left((\partial_t - \partial_s) P_{12}(t,s, \Theta_{12})  - \frac{\tilde P_{12}(t,0, \tilde \Theta_{12}) \tilde P_{22}(t, s,0, \tilde\Theta_{22})}{\tilde P_{\hat{22}}(t,0, \tilde \Theta_{\hat{22}})} \right)^2, 
    \\
     \Lcal^b_{12}(\tilde \Theta_{11}, \Theta_{12}, \Tcal) &= \frac{1}{|\Tcal|} \sum_{t,s \in \Tcal }\left( P_{12}(t,s, \Theta_{12})  -  b\tilde P_{11}(t, \tilde \Theta_{11}) \right)^2 1_{s=0, t\neq 0}  ,
    \\
    \Lcal^{f}_{12}(\Theta_{12}, \Tcal) &= \frac{1}{|\Tcal|} \sum_{t,s \in \Tcal } \left( P_{12}(t, s, \Theta_{12}) ^2 1_{t=T} \right) .
    \\
}
\noindent\textbf{Losses of $P_{\hat{22}}$:}
\bes{
\label{loss_3}
 \Lcal^r_{\hat{22}}(\Theta_{\hat{22}}, \Tcal) &= \frac{1}{|\Tcal|} \sum_{t,s \in \Tcal }\left((\partial_t - \partial_s) P_{\hat{22}}(t,s, \Theta_{\hat{22}}) \right)^2 ,
    \\
    \Lcal^b_{\hat{22}}(\tilde \Theta_{11}, \Theta_{\hat{22}}, \Tcal) &= \frac{1}{|\Tcal|} \sum_{t,s \in \Tcal }\left( P_{\hat{22}}(t,s, \Theta_{\hat{22}})  - \sigma^2\tilde P_{11}(t, \tilde \Theta_{11}) \right)^2 1_{s=0,  t \neq T}  ,
    \\
    \Lcal^{f}_{\hat{22}}(\Theta_{\hat{22}}, \Tcal) &= \frac{1}{|\Tcal|} \sum_{t,s \in \Tcal } \left( P_{\hat{22}}(t, s, \Theta_{\hat{22}}) ^2 1_{t=T} \right) .
    \\
}
\noindent\textbf{Losses of $P_{22}$:}
\bes{
\label{loss_4}
     \Lcal^r_{22}(\Theta_{22}, \tilde \Theta_{22},  \Tcal) &= \frac{1}{|\Tcal|} \sum_{t,s, r \in \Tcal }\left((\partial_t - \partial_s - \partial_r) P_4(t,s,r, \Theta_{22}) - \frac{\tilde P_{22}(t, 0, r,  \tilde \Theta_{22}) \tilde P_{22}(t, s, 0,  \tilde \Theta_{22})}{\tilde P_{\hat{22}}(t,0, \tilde \Theta_{\hat{22}})} \right)^2 ,
    \\
    \Lcal^b_{22}(\tilde \Theta_{12}, \Theta_{22}, \Tcal) &= \frac{1}{|\Tcal|} \sum_{t,s, r \in \Tcal }\left( P_{22}(t,s,r, \Theta_{22})  - b\tilde P_{22}(t,s, \tilde \Theta_{12}) \right)^2 1_{r=0,  t \neq T}  \\
    & \;\;\;\;\; +  \frac{1}{|\Tcal|} \sum_{t,s, r \in \Tcal }\left( P_{22}(t,s,r, \Theta_{22})  - b\tilde P_{22}(t,r, \tilde \Theta_{12}) \right)^2 1_{s=0,  t \neq T} ,
    \\
    \Lcal^{f}_{22}(\Theta_{22}, \Tcal) &= \frac{1}{|\Tcal|} \sum_{t,s,r \in \Tcal } \left( P_{22}(t, s,r,  \Theta_{22}) ^2 1_{t=T} \right).
    \\
}

\begin{algorithm}
\SetAlgoLined
\KwResult{A set of optimized parameters $\Theta^*=(\Theta_k^*)_{k \in \{11,12, \hat{22}, 22\}}$;}
 Initialize the learning rate $\eta$, the neural networks ${P}(\Theta) = (P_k(\Theta_k))_{k }$ and $\tilde{P}(\tilde \Theta) = (\tilde{P}_k(\tilde \Theta_k))_{k}$ \;
 Copy the weights $\tilde \Theta_k \leftarrow \Theta_k$, $k \in \{11,12, \hat{22}, 22\} $ \;
 \For{each batch}{
    Randomly sample $\Tcal \subset [0,T]\times  [-d, 0]^2$ \;
    Compute the gradient $\nabla_\Theta \mathcal{L}(\Theta,\tilde \Theta,\mathcal{T})$ as in \eqref{gradients} \;
    Update $\Theta \leftarrow \Theta - \eta \nabla_\Theta \mathcal{L}(\Theta, \tilde \Theta,\mathcal{T})$\;
    Update $\tilde \Theta \leftarrow \Theta$;
}
\textbf{Return:} The set of  optimized parameters  $\Theta^*$.
\caption{Deep learning scheme to solve \eqref{eq:a}-\eqref{eq:b}-\eqref{eq:c}}
\label{algo:nn_training}
\end{algorithm}

\section{Applications to mean-variance portfolio selection with execution delay}
\subsection{One asset with delay}
We now aim at solving the celebrated example of mean-variance portfolio selection, see \cite{markowitz1952portfolio}, with execution delay in the spirit of the problem of hedging of European options with execution delay presented in \cite{fabbri2014infinite}. We present here the settings. Let us consider a standard Black-Scholes financial market, composed of a risk-less asset with zero interest rate
\bes{
    \label{eq:bond_asset}
     S^0_t = 1, \quad t \in [0,T],
}
and a risky asset with dynamics
\bes{
    \label{eqrisky_asset}
    d S_t = S_t \left\{ \left( \sigma \lambda \right) dt  + \sigma dW_t  \right\},     \quad t \in [0,T],          
}
where $ \lambda$ and $ \sigma$ are constants representing respectively the risk premium and the volatility of the risky asset. At every time $t \in [0,T]$ the investor chooses, based on the information $\mathcal{F}_t$, to allocate the amount of money $\c_t \in \R$ into the risky asset. However, due to execution delays this order will be executed at time $t + d$. Set $N_t^\c$ (respectively $N^0_t$) the number of risky (respectively risk-less) shares held at time $t$.
Then, given an investment strategy $\c \in \mathcal{A}$, the value $\left(X_t^\c \right)_{t \in [0,T]}$ of the portfolio, that we suppose self-financing, follows the dynamics
\bes{
    d X_t^\c &= N_t^\c dS_t + \underbrace{(dN_t^\c)S_t + (d N^0_t)S^0_t}_{=0 \text{ , self-financing}} \\
            &= \underbrace{N_t S_t}_{\c_{t-d}} \left\{ ( \sigma \lambda) dt  + \sigma dW_t  \right\}.
}
Consequently, the controlled state equation of the portfolio's value is of the form
\bec{
    &dX_t^\c = \c_{t-d} \left(\left(\sigma \lambda\right)  dt +\sigma  dW_t\right), \qquad t \in [0,T],\\
    &X_0 = x_0, \quad \c_s = \gamma_s, \qquad  \forall s \in [-d, 0],
}
with $x_0 > 0$ and $\gamma \in L^2([-d, 0], \R)$. The Mean-Variance portfolio selection problem in continuous-time consists in solving the following constrained problem
\bec{
\label{optimization_problem_mv}
    &\min_{\substack{\c \in \Acal}} \V (X_T^\c) \\
    &\text{s.t. } \E[X_T^\c] = c . \\
}
It is well-known that problem \eqref{optimization_problem_mv} is equivalent to the following \textbf{max-min problem}, see \citet[Section 6.6.2]{pham2009continuous}
\bes{
    \label{outer_inner_optimization_pb}
    &\max_{\eta \in \R} \min_{\substack{\c \in \Acal}} \E\left[\left(X_T^\c - (c-\eta) \right)^2\right] - \eta^2. \\
}
Thus, solving problem \eqref{optimization_problem_mv} involves two steps. First, the internal minimization problem in terms of the Lagrange multiplier $\eta$ has to be solved. Second, the optimal value of $\eta$ for the external maximization problem has to be determined. Thus, with $\xi = c-\eta$, we first define the \textbf{Inner optimization problem:}

\bes{
    \label{eq:inner_mv}
    \min_{\alpha \in \mathcal A}\E\left[ \left(X^{\alpha}_T-\xi\right)^2\right].
}
Note that, by setting $\tilde{X}^\c = X^\c - \xi$, the inner problem \eqref{eq:inner_mv} fits into the delayed LQ control problem analysed in Section \ref{section:verification_simplified}. We first solve the inner optimization problem \eqref{eq:inner_mv} in the following lemma.
% The absence of delay in the state variable $X$ leads us to consider the following reduced system of equations where we have, for almost any $(t,s,u) \in [0,T] \times [-d, 0]^2$ 

% \begin{align}
% \label{eq:E_i_MV}
%      &\dot{E}_1(t) 
%      =\frac{\left( E_2(t, -d)\right)^2}{E_3(t, -d)},  &&   (\partial_t + \partial_s)E_2(t,s) = \frac{E_2(t, -d)E_4(-d, s)}{E_3(t, -d)},\\
%     &(\partial_t + \partial_s) E_3(t, s) =0,   &&   (\partial_t + \partial_s + \partial_u) E_4(t, s,u) = \frac{E_4(t, s, -d)E_4(t, -d,u)}{E_3(t, -d)}.\\
% \end{align}
% With the following boundary conditions for almost any $t, s\in [0,T) \times [-d, 0]$
% \begin{align}
%     \label{eq:E_i_boudary_cond_MV}
%     &(\sigma \lambda)E_1(t) = E_2(t, 0),   &&   \sigma^2 E_1(t) = E_3(t, 0), \\
%     &   (\sigma \lambda) E_2(t,s)= E_4(t,s, 0),  &&   E_4(t,s,u)  = E_4(t,u,s),
% \end{align}
% and combined the terminal conditions $E_1(T) = 1$ and $E_i(T)=0$ for $i \in \{2,3,4\}$. 
\begin{lemma}
\label{L:verification_mv}
Fix $\eta \in \R$ and $\xi = c-\eta$. Assume $T < d \Ncal(d, (\sigma \lambda), \sigma)$ and define $\c^*(\xi)$ as the investment strategy
\bes{
    \label{eq:optimal_control}
     \c^*_t(\xi) =&\frac{-1_{t \leq T-d}}{P_{\hat{22}}(t,0)} \bigg\{ (X^{\c^*}_t-\xi) P_{12}(t,0) + \int_{t-d}^t \c^*_s(\xi) P_{22}(t, 0, s-t) ds \bigg\},
}
where $P$ denotes the solution to \eqref{eq:a}-\eqref{eq:b}-\eqref{eq:c} in the sense of Definition \ref{def:sol_E_i_simplified}. Then, the  inner minimization problem \eqref{eq:inner_mv} admits $\c^*(\xi)$ as an admissible optimal feedback strategy and the optimal value is 
\bes{
\label{eq:value_mv}
    V_0(\xi) =& P_{11}(0) (x_0-\xi)^2  + R({x}_0-\xi, \gamma),\\
}
where $R(\gamma)$ denotes the cost associated to the initial investment strategy $\gamma$ on $[-d, 0]$
\bes{
    R(x, \gamma) =&  2  x \int_{-d}^0 \gamma_s P_{12}(0,s) ds + \int_{-d}^0 \gamma^2_s P_{\hat{22}}(0,s) ds + \int_{[-d,0]^2} \gamma_s\gamma_u P_{22}(0,s, r)  ds dr.
}
\end{lemma}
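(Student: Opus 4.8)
The plan is to obtain the Lemma as a direct corollary of Section~\ref{section:verification_simplified} applied after a shift of the state. First I would set $\tilde X^\c_t := X^\c_t - \xi$. Since $\xi$ is a deterministic constant, $\tilde X^\c$ solves
\[
d\tilde X^\c_t = \c_{t-d}\big((\sigma\lambda)\,dt + \sigma\,dW_t\big),\qquad \tilde X^\c_0 = x_0-\xi,\qquad \c_s=\gamma_s,\quad s\in[-d,0],
\]
which is precisely the controlled system \eqref{eq:dynamic_heuri} with drift coefficient $b=\sigma\lambda$ and initial position $x_0-\xi$, while the inner cost becomes $\E[(\tilde X^\c_T)^2]$, i.e.\ the functional \eqref{eq:cost_functional_heuri}. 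I would also observe that the admissible set is untouched by this change of variable: $\c\in\Acal$ requires $\E[\sup_{t\le T}|X^\c_t|^2]<\infty$, and since $X^\c$ and $\tilde X^\c$ differ by the constant $\xi$ this is equivalent to $\E[\sup_{t\le T}|\tilde X^\c_t|^2]<\infty$; likewise the prescribed initial control segment $\gamma$ on $[-d,0]$ is unaffected. Hence \eqref{eq:inner_mv} is literally the problem \eqref{eq:dynamic_heuri}--\eqref{eq:cost_functional_heuri} with the dictionary $b\leftrightarrow\sigma\lambda$, $x\leftrightarrow x_0-\xi$.

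Next I would run the existence/admissibility step. Under the hypothesis $T< d\,\Ncal(d,(\sigma\lambda),\sigma)$, together with $\Ncal(d,(\sigma\lambda),\sigma)\ge 2$ as required by Proposition~\ref{prop:ricatti}, that proposition yields a unique bounded solution $P=(P_{11},P_{12},P_{\hat{22}},P_{22})$ of \eqref{eq:a}--\eqref{eq:b}--\eqref{eq:c} built with $b=\sigma\lambda$, in the sense of Definition~\ref{def:sol_E_i_simplified}, in particular with $P_{\hat{22}}(t,0)>0$ for $t<T-d$. Proposition~\ref{prop:admissibility} then applies to this $P$: the fixed-point equation \eqref{eq:alpha_pt_fix} written for the shifted system is exactly \eqref{eq:optimal_control} after the substitution $\tilde X^{\c^*}_t=X^{\c^*}_t-\xi$ and $x=x_0-\xi$ (using that $x+\int_0^t\c_{s-d}((\sigma\lambda)\,ds+\sigma\,dW_s)$ is the reconstructed state \eqref{eq:X}), so it has a unique solution $\c^*(\xi)\in L^2_\F([0,T],\R)$, and the associated state satisfies the required integrability; thus $\c^*(\xi)\in\Acal$.

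Having checked assumptions~\ref{T:verif:i_simplified} and~\ref{T:verif:ii_simplified} of Theorem~\ref{T:verif} for the shifted problem, I would invoke that verification theorem verbatim: $\c^*(\xi)$ is an optimal feedback control for $\min_{\c\in\Acal}\E[(\tilde X^\c_T)^2]=\min_{\c\in\Acal}\E[(X^\c_T-\xi)^2]$, and the optimal value is $\langle P_0\tilde z,\tilde z\rangle_H$ with $\tilde z=(x_0-\xi,\gamma)\in H$. Expanding this scalar product through the block form of $P_0$ used in \eqref{eq:value},
\bes{
\langle P_0\tilde z,\tilde z\rangle_H &= P_{11}(0)(x_0-\xi)^2 + 2(x_0-\xi)\int_{-d}^0 P_{12}(0,s)\gamma_s\,ds \\
&\quad + \int_{-d}^0 P_{\hat{22}}(0,s)\gamma_s^2\,ds + \int_{[-d,0]^2} \gamma_s\gamma_r P_{22}(0,s,r)\,ds\,dr,
}
which is exactly $P_{11}(0)(x_0-\xi)^2 + R(x_0-\xi,\gamma)$, giving \eqref{eq:value_mv}. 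There is no genuine obstacle here; the one point I would be careful to spell out is the invariance of $\Acal$ (and of the data $\gamma$) under the deterministic translation by $\xi$, since this is what makes $\tilde X^\c = X^\c-\xi$ an exact reduction of \eqref{eq:inner_mv} to the setting of Section~\ref{section:verification_simplified}. Everything else is a transcription of Propositions~\ref{prop:ricatti} and~\ref{prop:admissibility} and Theorem~\ref{T:verif} under the substitution $b=\sigma\lambda$.
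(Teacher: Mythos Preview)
Your proposal is correct and follows essentially the same route as the paper: shift the state by the constant $\xi$ to reduce \eqref{eq:inner_mv} to the model problem \eqref{eq:dynamic_heuri}--\eqref{eq:cost_functional_heuri} with $b=\sigma\lambda$ and initial point $x_0-\xi$, invoke Proposition~\ref{prop:ricatti} for the existence of $P$, Proposition~\ref{prop:admissibility} for the admissibility of $\c^*(\xi)$, and then Theorem~\ref{T:verif} for optimality and the value formula. Your write-up is in fact slightly more careful than the paper's, since you spell out the invariance of $\Acal$ under the deterministic translation and flag the auxiliary hypothesis $\Ncal(d,(\sigma\lambda),\sigma)\ge 2$ needed to apply Proposition~\ref{prop:ricatti}.
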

\begin{proof}
    First, note that Proposition \ref{prop:ricatti} yields the existence and uniqueness of a solution  $P$ to \eqref{eq:a}-\eqref{eq:b}-\eqref{eq:c}. Furthermore the admissibility of $\c^*(\xi)$ results from Proposition \ref{prop:admissibility}. For any $\c \in \mathcal{A}$, define $\Tilde{X}^{\alpha}_t = X_t^{\alpha} - \xi$. Then, by It\^o's formula we have  
\bec{
    &d\Tilde{X}_t^\c = \c_{t-d} \left( (\sigma \lambda)  dt +\sigma  dW_t \right), \qquad t \in [0,T],\\
    &\Tilde{X}_0 = x_0 - \xi , \quad \c_s = \gamma_s, \qquad  \forall s \in [-d, 0].
}
As a result, $\Tilde{X}^{\c}$ and $X^{\c}$ have the same dynamics and $\Tilde{X}^{\c}_T=X^{\c}_T-\xi$ so that problem  \eqref{eq:inner_mv} can be alternatively written as
\bes{
\min_{\alpha \in \mathcal A}\E\left[ \left(\Tilde{X}^{\alpha}_T\right)^2\right].
}
Thus, the optimality of $\c^*(\xi)$ and the value \eqref{eq:value_mv} are immediately given by the verification theorem \ref{T:verif}. 
\end{proof}

\begin{theorem}
\label{T:verif_}
Assume $T < d \Ncal(d, (\sigma \lambda), \sigma)$. Then, the optimal investment strategy for the maximization problem \eqref{optimization_problem_mv} is  given by $a^*(\xi^*)$ defined in \eqref{eq:optimal_control}
with $\xi^* = c-\eta^*$ and 
% \bes{
% \label{eq:eta_star}
%      \eta^* = \frac{e^{-\int_0^T r_s ds}\left(K(\gamma) + (X_0 - ce^{-\int_0^T r_s ds})\right)}{1-E^0_1 e^{-2 \int_0^T r_s ds}},
% } 
\bes{
\label{eq:eta_star}
     \eta^* &= \frac{K(\gamma) + P_{11}(0)(x_0 - c)}{1-P_{11}(0)},
     \\
     K(\gamma) &= \int_{-d}^0 \gamma_s P_{12}(0,s) ds.
} 
Furthermore, the value of \eqref{optimization_problem_mv} is 
% \bes{
%      \label{eq:value_final}
%      \V (X_T^*) = \frac{E^0_1 \left(X_0 - c e^{-\int_0^T r_s ds} + K(\gamma) \right)\left( X_0 - c e^{-\int_0^T r_s ds} + K(\gamma)e^{-2\int_0^T r_s ds}\right)}{1 - E^0_1 e^{-2\int_0^T r_s ds}}.
% }
\bes{
     \label{eq:value_final}
     \V (X_T^{\c^*}) &= \frac{P_{11}(0)}{1 - P_{11}(0)} \left(x_0 - c + K(\gamma) \right)^2 
     \\
     & \quad + \int_{-d}^0 \gamma^2_s P_{\hat{22}}(0,s) ds + \int_{[-d,0]^2} \gamma_s\gamma_u P_{22}(0,s, r)  ds dr.
}
\end{theorem}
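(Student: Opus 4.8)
The plan is to dispatch the inner minimization with Lemma \ref{L:verification_mv} and thereby collapse the max-min problem \eqref{outer_inner_optimization_pb} to a one-dimensional concave optimization over the Lagrange multiplier. Recall from \citet[Section 6.6.2]{pham2009continuous} that the constrained problem \eqref{optimization_problem_mv} is equivalent to \eqref{outer_inner_optimization_pb}, so that $\V(X_T^{\c^*}) = \max_{\eta \in \R} g(\eta)$ with $g(\eta) := V_0(c-\eta) - \eta^2$, where $V_0(\xi)$ is the value of the inner problem \eqref{eq:inner_mv} computed in Lemma \ref{L:verification_mv}. For each fixed $\eta$ that inner infimum is attained by the admissible feedback $\c^*(c-\eta)$ of \eqref{eq:optimal_control}; hence, once the outer maximizer $\eta^*$ is identified, the optimal strategy for \eqref{optimization_problem_mv} is simply $\c^*(\xi^*)$ with $\xi^* = c - \eta^*$, and the bulk of the work is computing $\eta^*$ and $g(\eta^*)$.

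First I would substitute the explicit form \eqref{eq:value_mv}, writing $R(x_0-\xi,\gamma) = 2(x_0-\xi)K(\gamma) + Q(\gamma)$, where $Q(\gamma) := \int_{-d}^0 \gamma_s^2 P_{\hat{22}}(0,s)\,ds + \int_{[-d,0]^2}\gamma_s\gamma_r P_{22}(0,s,r)\,ds\,dr$ collects the terms independent of $\xi$. Inserting $\xi = c-\eta$ turns $g$ into an explicit polynomial of degree two in $\eta$ with leading coefficient $P_{11}(0) - 1$. The crucial structural input here is Proposition \ref{prop:ricatti}: under the standing assumption $T < d\,\Ncal(d,(\sigma\lambda),\sigma)$ it provides existence and uniqueness of the solution $P$ to \eqref{eq:a}-\eqref{eq:b}-\eqref{eq:c} and, decisively, the strict bound $P_{11}(0) < 1$. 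Consequently the leading coefficient is strictly negative, $g$ is strictly concave and tends to $-\infty$ at infinity, so it has a unique maximizer characterized by $g'(\eta)=0$; solving this linear equation gives $\eta^*$ as in \eqref{eq:eta_star}.

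It then remains to plug $\eta^*$ back into $g$ and simplify. Exploiting the first-order relation $P_{11}(0)(x_0-c) + K(\gamma) = (1-P_{11}(0))\,\eta^*$ to eliminate the cross terms reduces $g(\eta^*)$ to the closed form \eqref{eq:value_final}; this is a routine rearrangement of the $\gamma$-dependent quadratic form that I would not expand here. As a final consistency check one should verify that $\c^*(\xi^*)$ actually meets the constraint $\E[X_T^{\c^*(\xi^*)}] = c$, so that the number delivered by duality is genuinely a variance: by an envelope identity, $\partial_\xi V_0(\xi) = -2(\E[X_T^{\c^*(\xi)}] - \xi)$, whence $g'(\eta) = 2(\E[X_T^{\c^*(c-\eta)}] - c)$ and $g'(\eta^*)=0$ yields the constraint.

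The main obstacle, insofar as there is one, is not the scalar optimization but securing its two inputs: that Lemma \ref{L:verification_mv} (via Theorem \ref{T:verif}) correctly identifies the inner value and optimizer, and that Proposition \ref{prop:ricatti} guarantees the strict inequality $P_{11}(0) < 1$, without which the outer maximization would have no finite solution. Beyond that, the proof is essentially bookkeeping of the quadratic form $R$ in $\eta$.
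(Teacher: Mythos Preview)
Your proposal is correct and follows essentially the same route as the paper: invoke Lemma \ref{L:verification_mv} to reduce \eqref{outer_inner_optimization_pb} to the scalar concave maximization $\max_\eta\{V_0(c-\eta)-\eta^2\}$, use Proposition \ref{prop:ricatti} to secure $P_{11}(0)<1$ and hence strict concavity, solve the first-order condition for $\eta^*$, and substitute back. Your envelope-based verification that $\E[X_T^{\c^*(\xi^*)}]=c$ is a nice addendum not present in the paper's proof.
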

\begin{proof}
     As $T < d \Ncal(d, (\sigma \lambda), \sigma)$, Proposition \ref{prop:ricatti} ensures the existence and uniqueness of a solution  $P$ to \eqref{eq:a}-\eqref{eq:b}-\eqref{eq:c}. From Lemma \ref{L:verification_mv} and \eqref{outer_inner_optimization_pb}, we have that the max-min problem \eqref{outer_inner_optimization_pb}, which is equivalent to \eqref{optimization_problem_mv}, reduces to 
    \bes{
    \label{outer_inner_optimization_pb_}
    &\max_{\eta \in \R} \Big\{ V_0(c-\eta) -\eta^2  \Big\} \\
    &=\max_{\eta \in \R} \Big\{ P_{11}(0) \left(x_0 - (c-\eta) \right)^2 + R({x}_0-(c-\eta), \gamma)  - \eta^2 \Big\}. \\
}
Furthermore, since $T < d \Ncal(d, (\sigma \lambda), \sigma)$, Proposition \ref{prop:ricatti} ensures $0<P_{11}(0)<1$ so that the maximization  problem is strictly concave. Consequently, $\eta^*$ given by \eqref{eq:eta_star} is the optimal parameter. Setting $\xi^*=c-\eta^*$ in  \eqref{eq:optimal_control} and \eqref{eq:value_mv} results in the optimality of $\alpha^*(\xi^*)$, and the optimal value \eqref{eq:value_final} for the mean-variance problem \eqref{optimization_problem_mv}.
\end{proof}
\begin{remark}
    In the absence of pre-investment strategy, $\gamma =0$, we recover the usual form of the \textit{efficient frontier formula}
    \bes{
    \V (X_T^{\c^*}) = \frac{P_{11}(0) }{1 - P_{11}(0)} \left(x_0 - c \right)^2.
    }
\end{remark} 

\vspace{5mm}
Our observations from the simulations are the following.\\

\textbf{Efficient frontier:} In Figure \ref{fig:efficient_frontier}, we plot the efficient frontier for different delays $d$. Note that the greater the delay, the greater is the variance. This could have been foreseen by observing that, when the initial control is set to 0, i.e. $\gamma = 0_{L^2}$, the value function takes the form $V(x,0_{L^2}) = P_{11}(0)x^2$, see \eqref{eq:value}. As the value function is clearly an increasing function of the delay, the terminal variance of the portfolio $\V (X_T^{\c^*}) = \frac{P_{11}(0) }{1 - P_{11}(0)} \left(x_0 - c \right)^2$ is also an increasing function of the delay.\\

\textbf{Destabilization effect :} In Figure \ref{fig:X_a_1}, we plot different scenarios of portfolio allocation. We observe a {destabilization} effect and a supplement of volatility induced by the delay feature. We also note the tendency to invest more aggressively for greater values of the delay, as the investor has less time to ensure that the promised yield is achieved. We propose the following interpretation: In the classical setting, where $d=0$, if $Y^*$ denotes the optimal portfolio value process, the optimal investment strategy is of the form $\c^*_t = -\frac{b}{\sigma^2}(Y^{\c^*}-\mu^*)$ for a certain constant $\mu^* > c$. It can then easily be shown that $Y^* \leq \mu^*$. Thus, the optimal strategy consists in aiming from below at a fixed target $\mu^*$. When $d>0$, the optimal control is composed of an additional \textit{inertial term}
\bes{
     \c^*_t(\xi^*) =&\frac{-1_{t \leq T-d}}{P_{\hat{22}}(t,0)} \bigg\{ \underbrace{(X^{\c^*}_t-\xi^*) P_{12}(t,0)}_{\text{Usual mean-reverting term} }  + \underbrace{\int_{t-d}^t \c^*_s(\xi) P_{22}(t, 0, s-t) ds}_{\text{New inertial term}}  \bigg\},
}
so that, contrary to the case where $d=0$, the optimal control does not cancel when the target $\xi^*$ is attained. Also, note that at every time $t$, the agent doesn't have any control on the near future from $t$ to $t+d$.\\

\textbf{Kernel $P$ :} In Figure \ref{eq:kernels}, we plot the kernels $P_{11}, P_{12}, P_{\hat{22}}, $ and $P_{22}$. Note the discontinuity between $\Dcal_b$ and $\Dcal_c$ also described in Figure \ref{fig:domain}.

\begin{figure}[h!]
\centering
{
    \includegraphics[width=70mm]{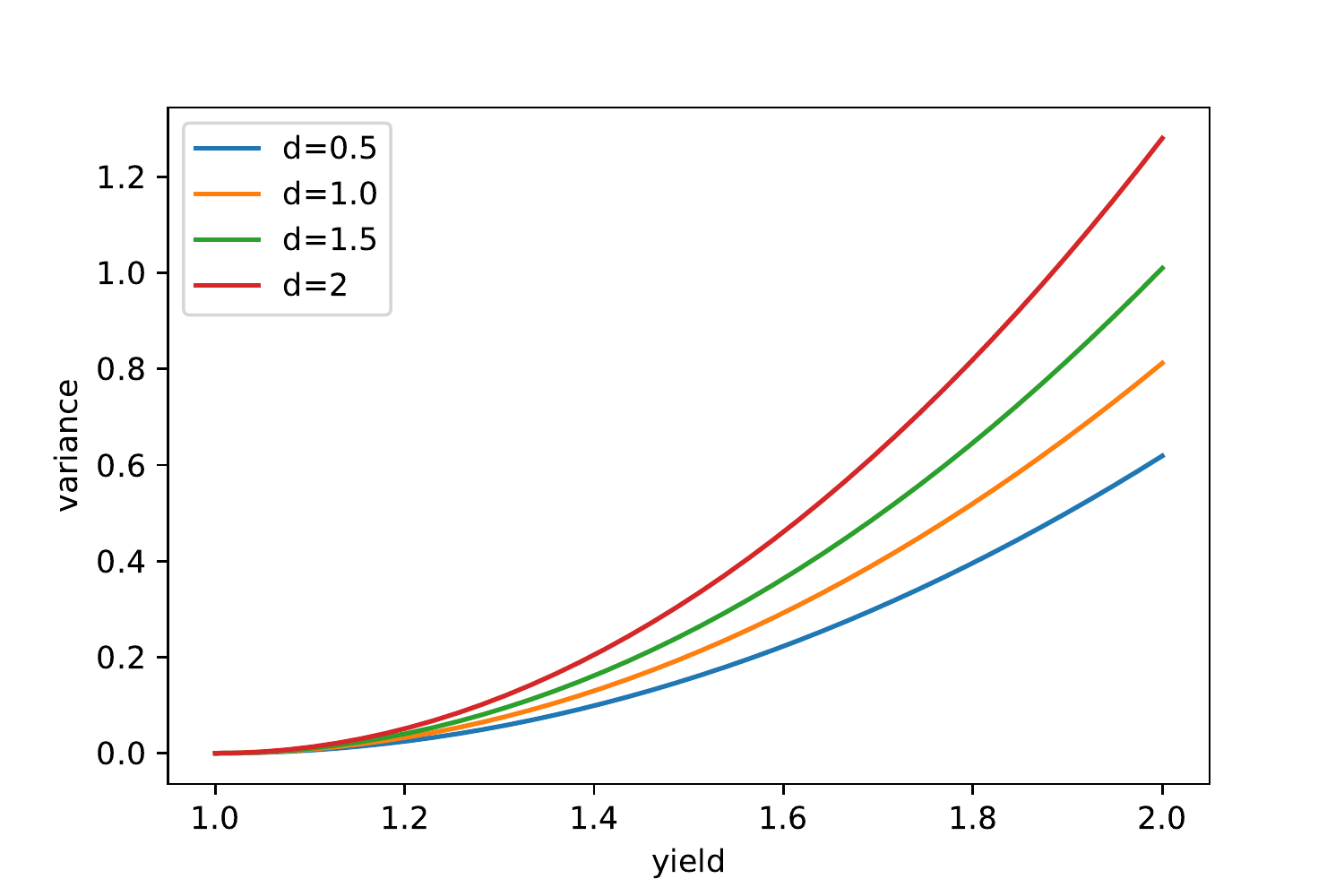}
    }
\caption{Efficient frontier with $\sigma=1$, $\lambda=0.5$, $T=5$ and $\gamma \equiv 0$.} 
\label{fig:efficient_frontier}
\end{figure}

\begin{figure}[h!]
\centering
\subfloat[]{
    \includegraphics[width=70mm]{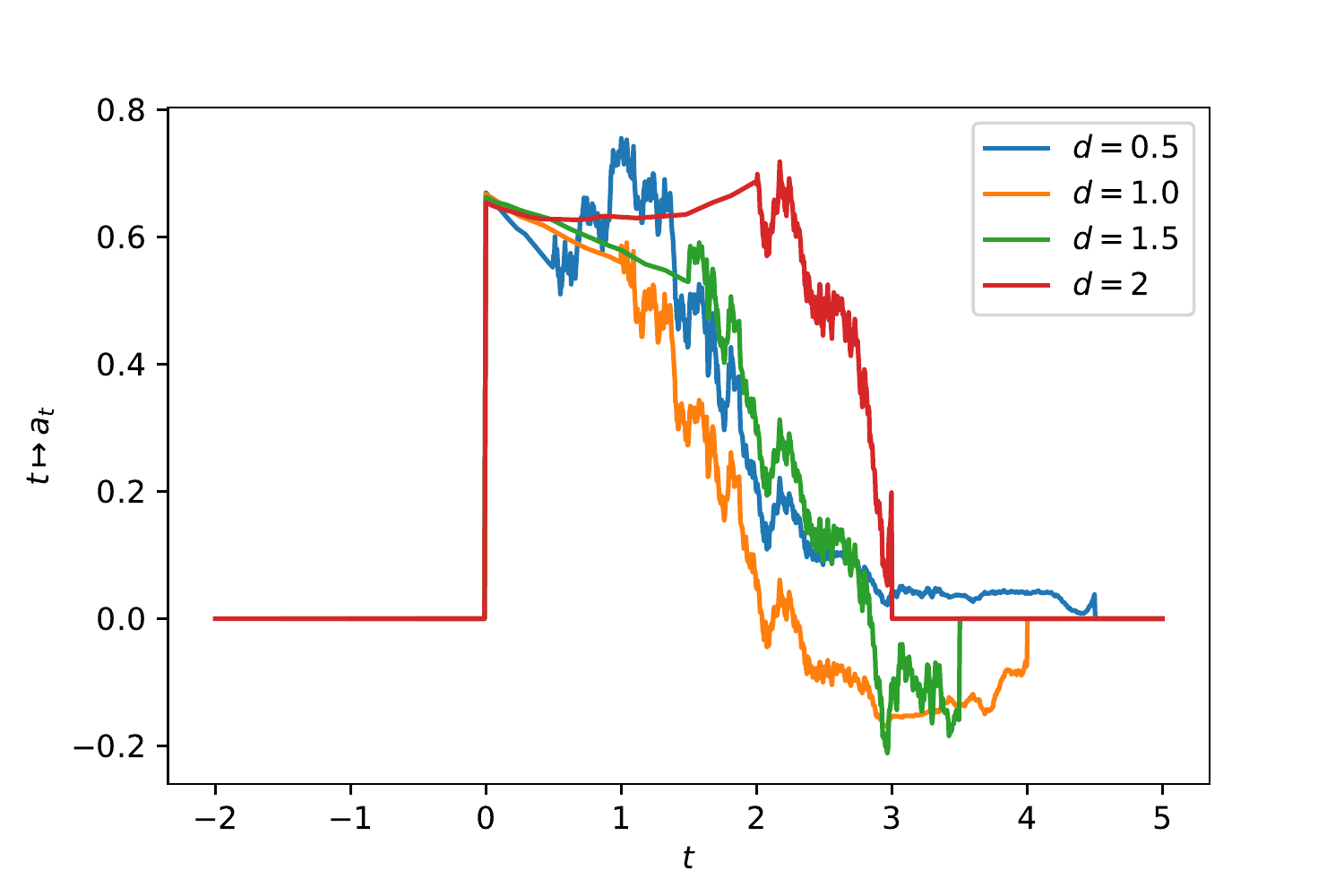}
    \includegraphics[width=70mm]{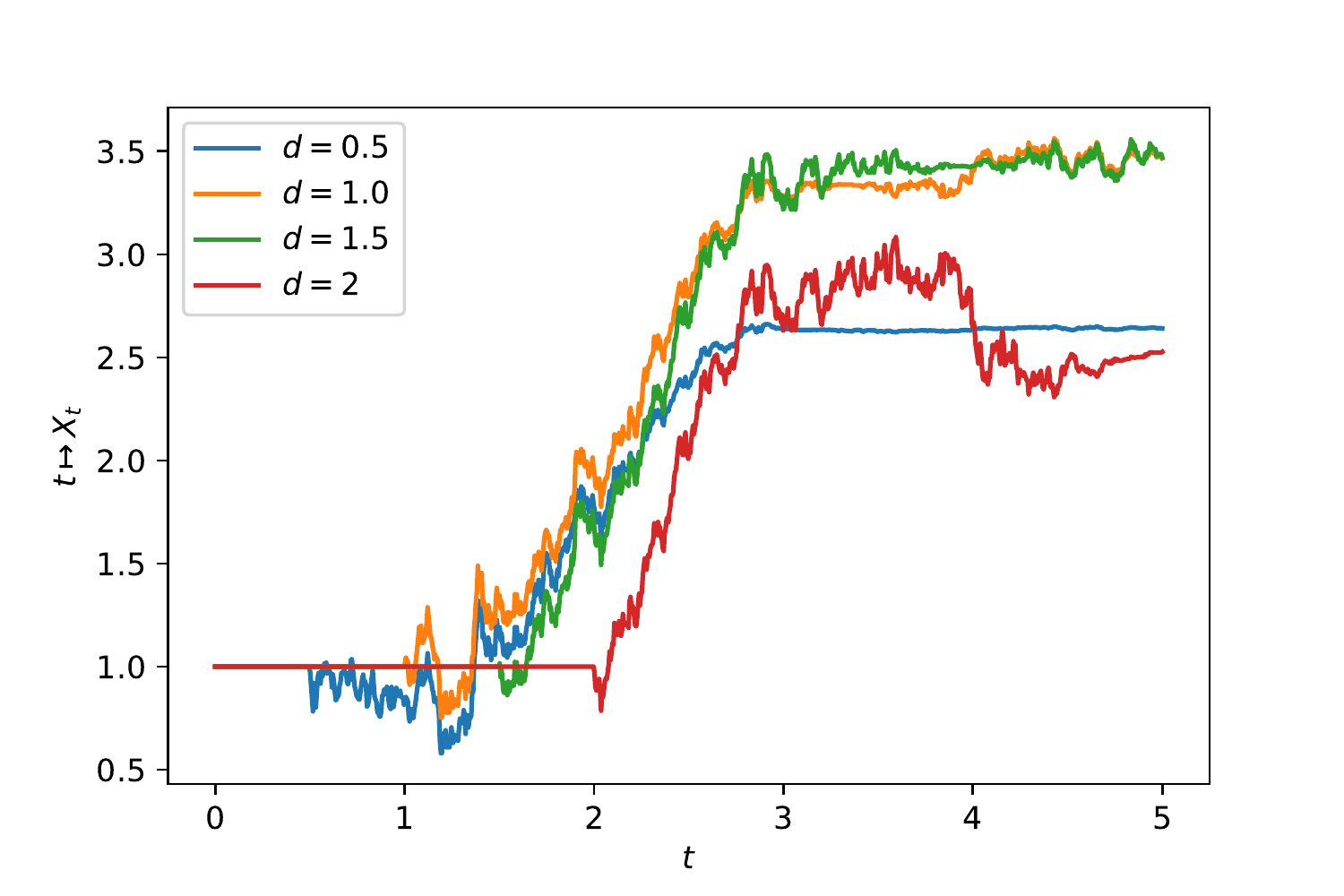}
    }
\vspace{-0.3cm}
\subfloat[]{
    \includegraphics[width=70mm]{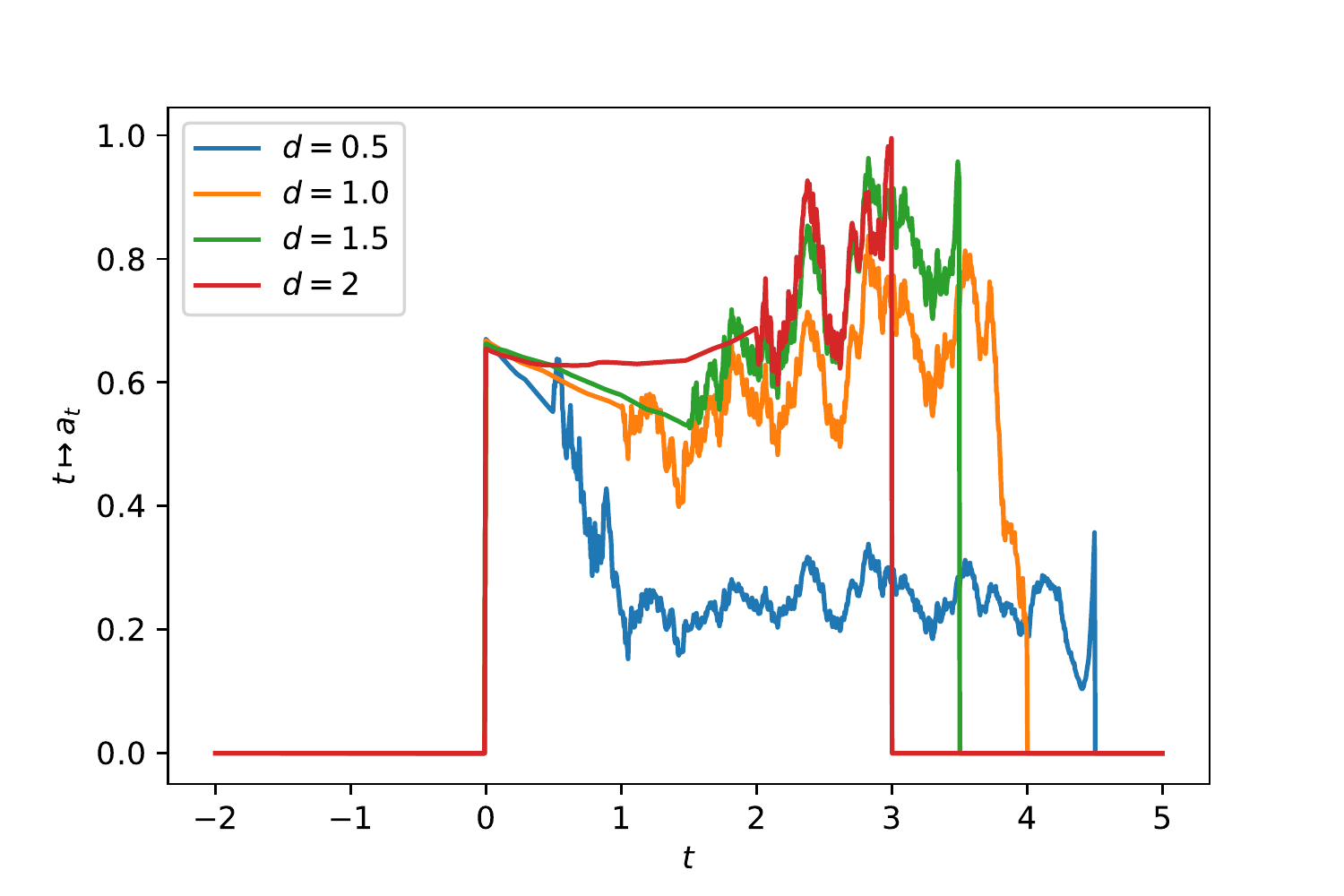}  
    \includegraphics[width=70mm]{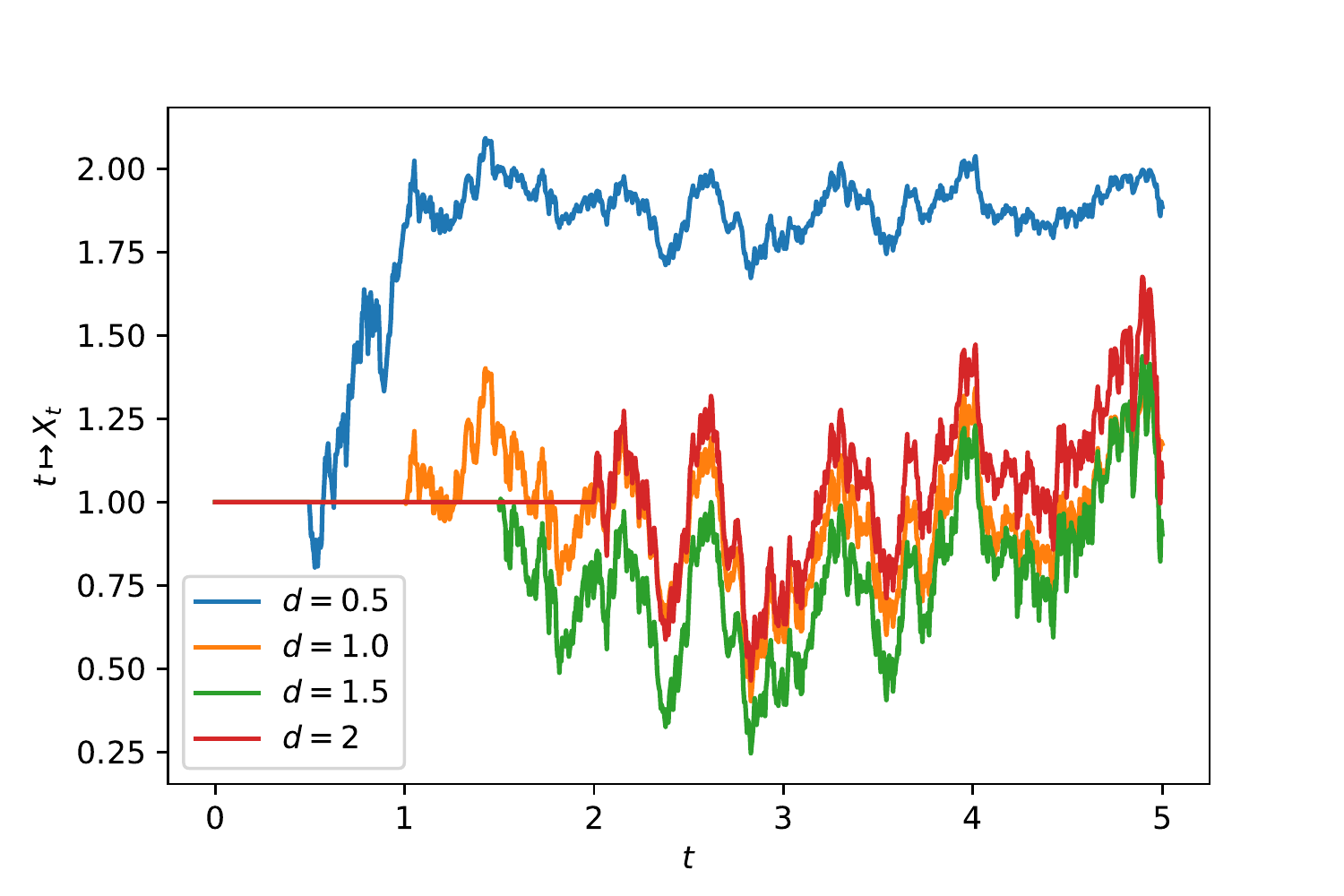}
    }
\vspace{-0.3cm}
\subfloat[]{
    \includegraphics[width=70mm]{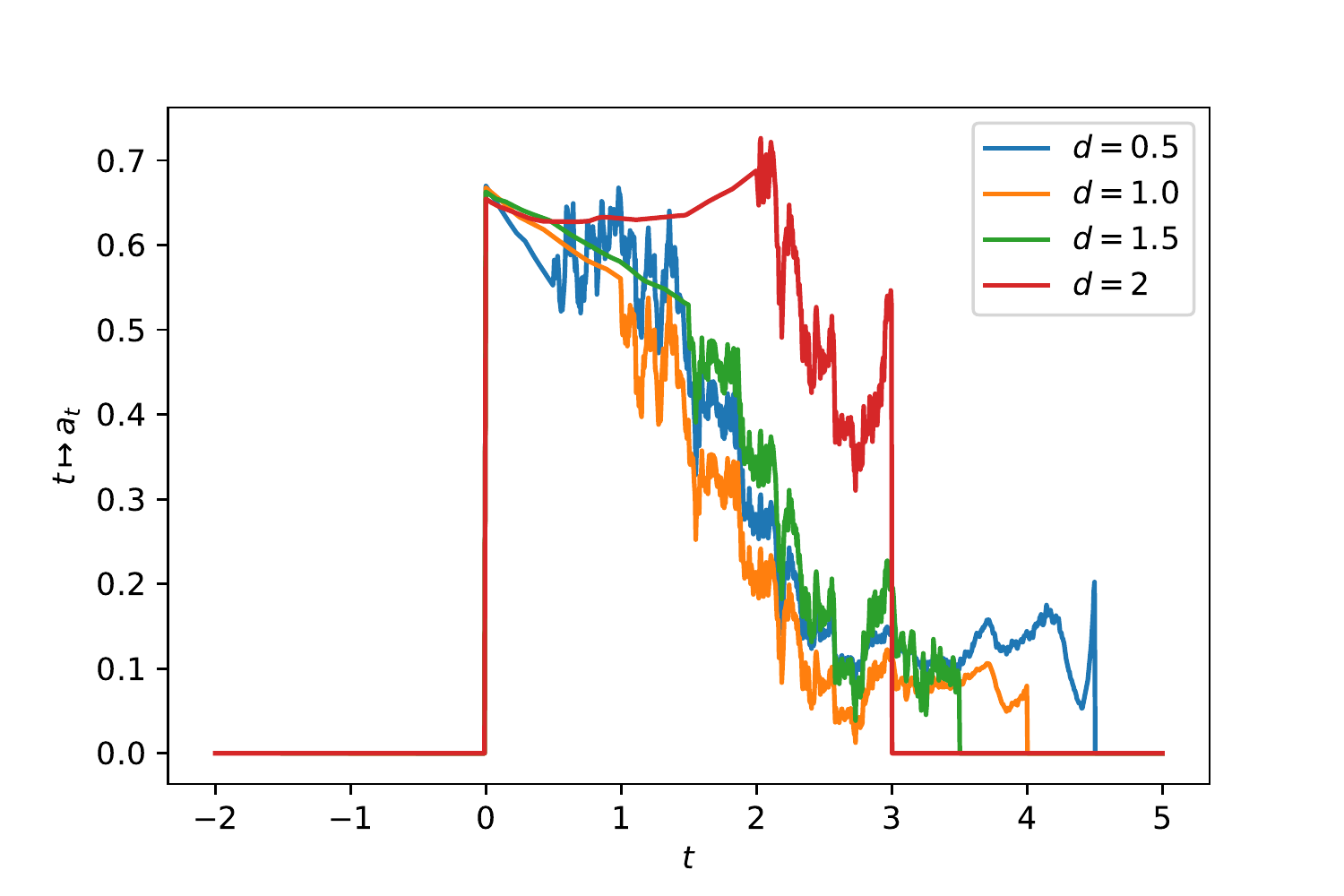}  
    \includegraphics[width=70mm]{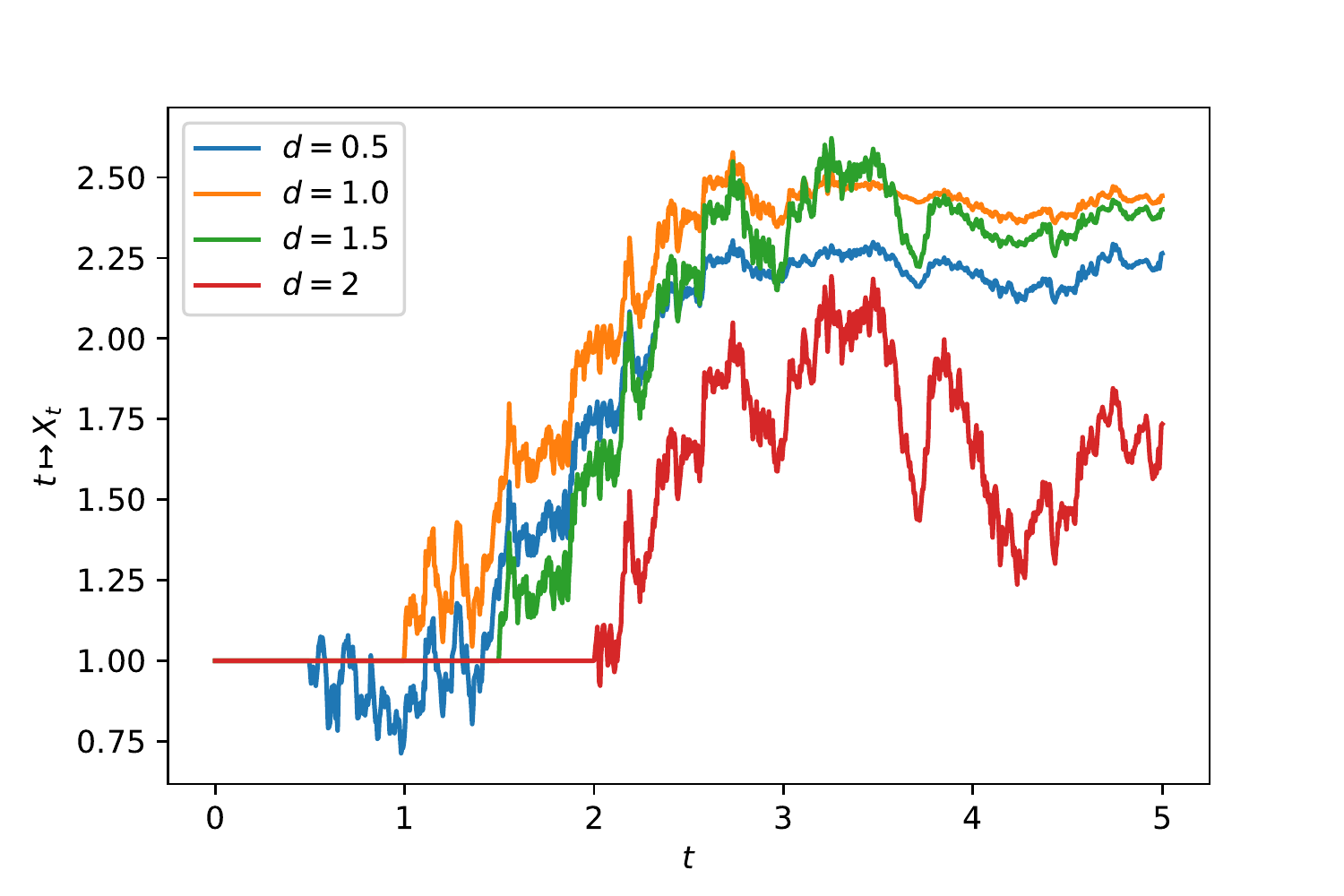}
    }
\caption{Different scenarios of the optimal portfolio with $c=1.6$, $\sigma=1$, $\lambda=0.5$, and $T=5$. Left: $t \mapsto \c^*$, right: $t\mapsto X_t^*$. Note the {destabilization} effect and the supplement of volatility induced by the delay feature. Note also the tendency to invest more aggressively the  delayed investor has, as she has less time to ensure the promised yield. $\xi^*(d=0.5)=2.57$, 
$\xi^*(d=1)=2.68$,
$\xi^*(d=1.5)=2.80$, $\xi^*(d=2)=2.97$.} 
\label{fig:X_a_1}
\end{figure}

\begin{figure}[h!]
\centering
\subfloat[Right: $t \mapsto P_{11}(t)$. Left: $t,s \mapsto P_{12}(t,s)$.]{
\label{fig:horizon_T_1}
    \includegraphics[width=70mm]{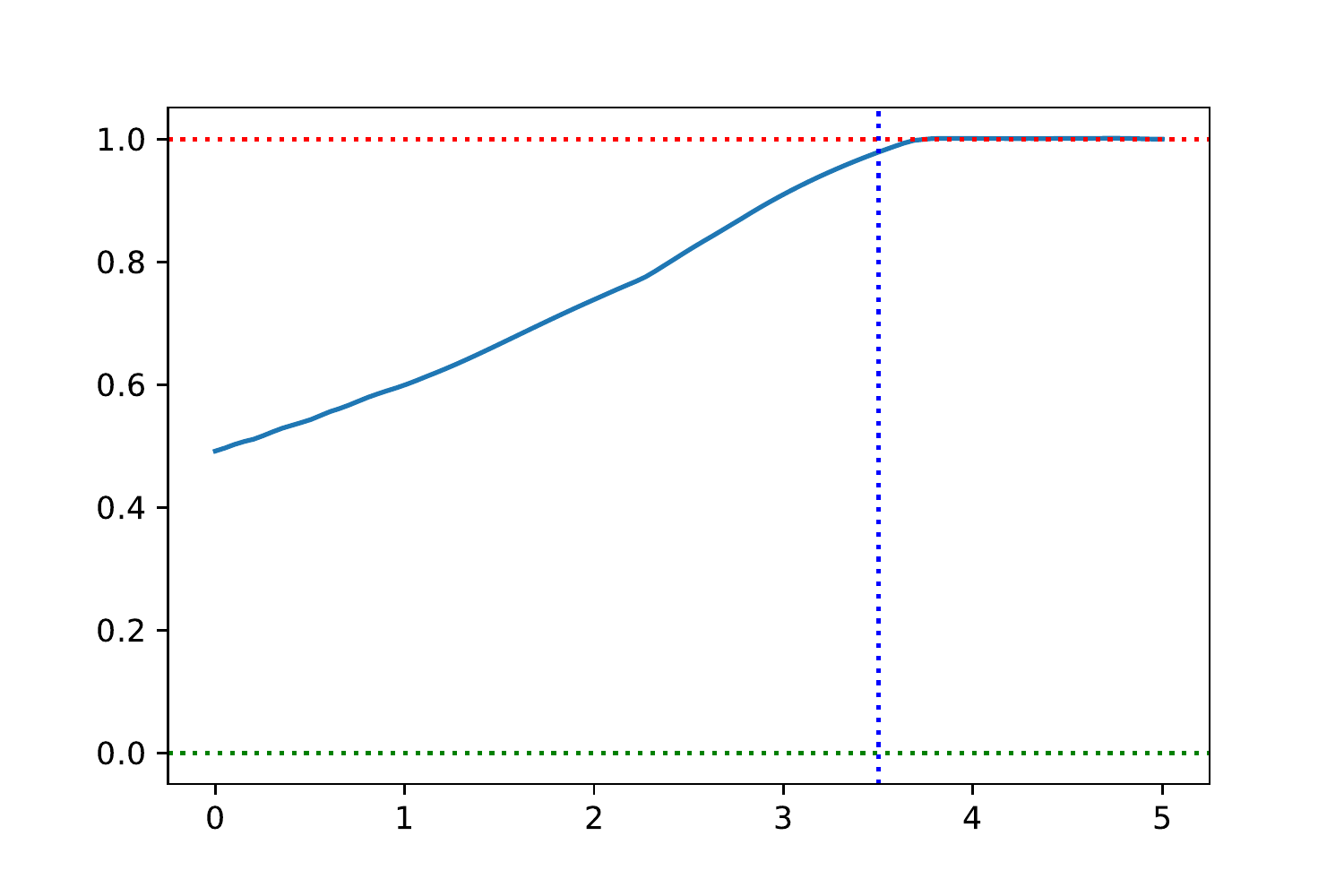}
    \includegraphics[width=70mm]{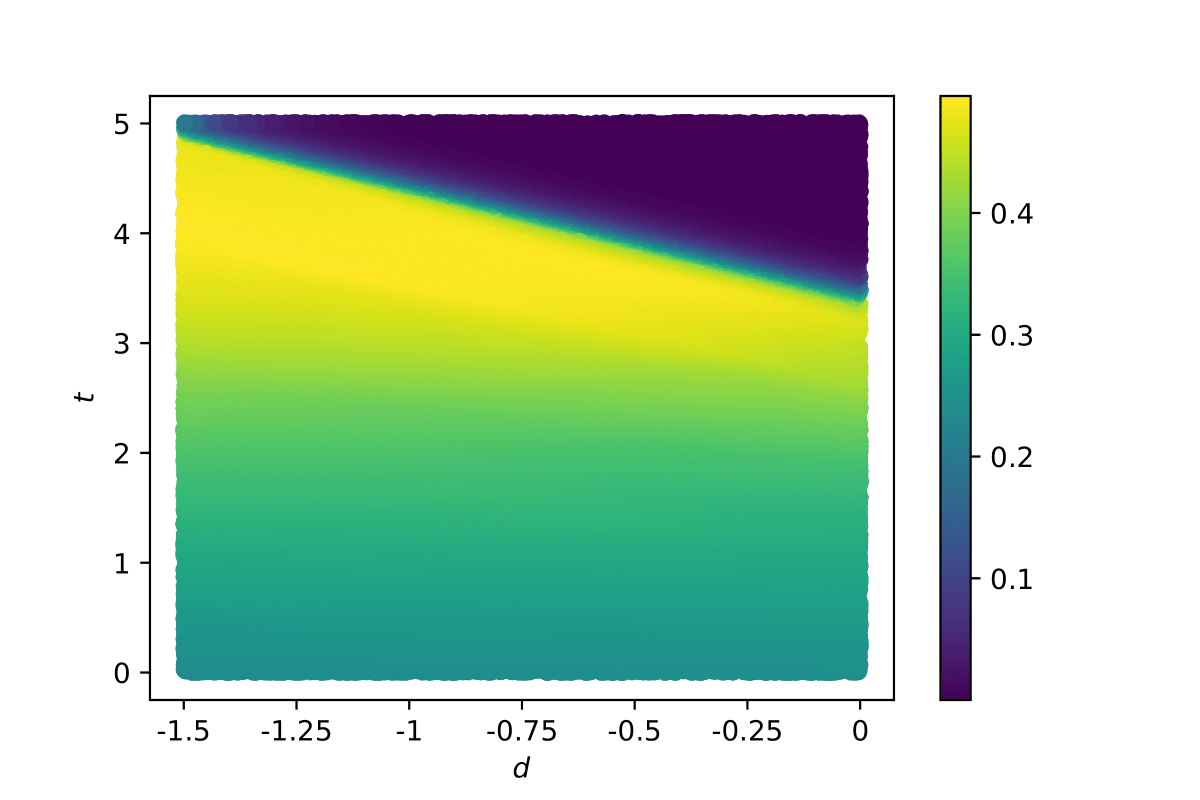}
    }
\vspace{-0.3cm}
\subfloat[Right: $t \mapsto P_{\hat{22}}(t,s)$. Left: $t,s \mapsto P_{22}(t,s,0)$.]{
\label{fig:horizon_T_2}
    \includegraphics[width=70mm]{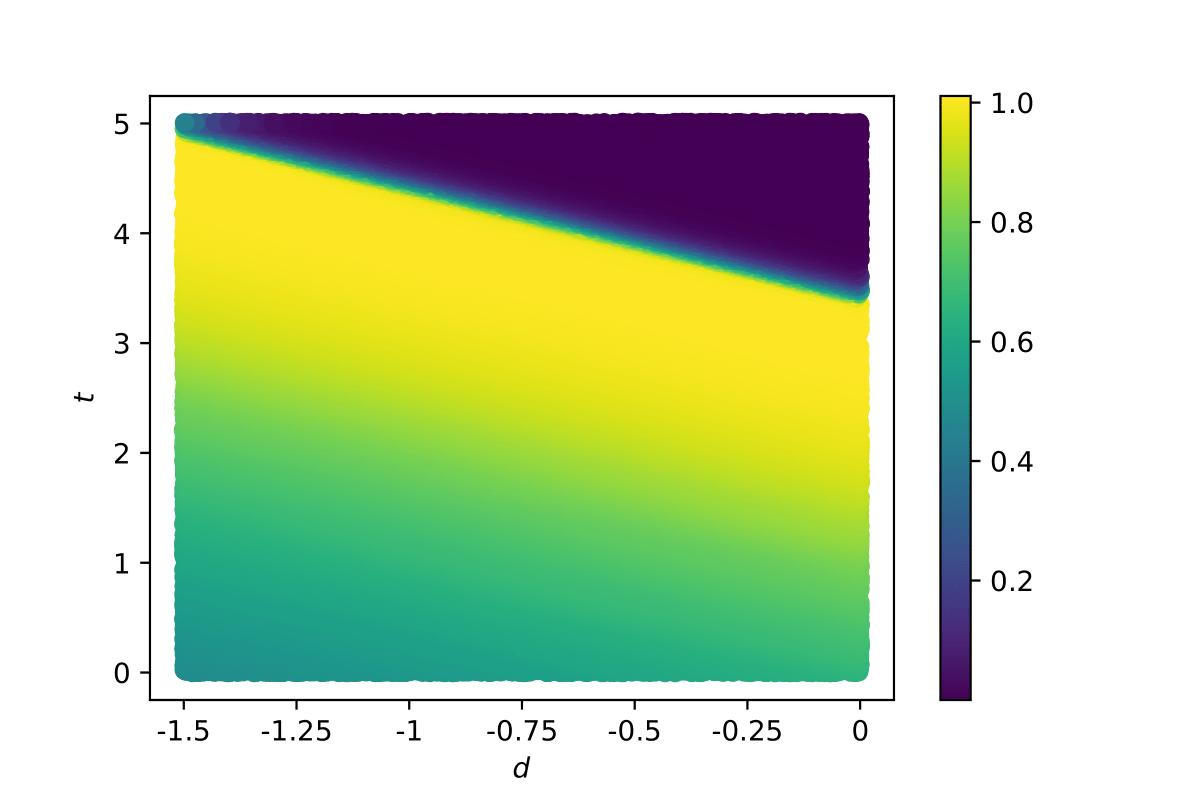}  
    \includegraphics[width=70mm]{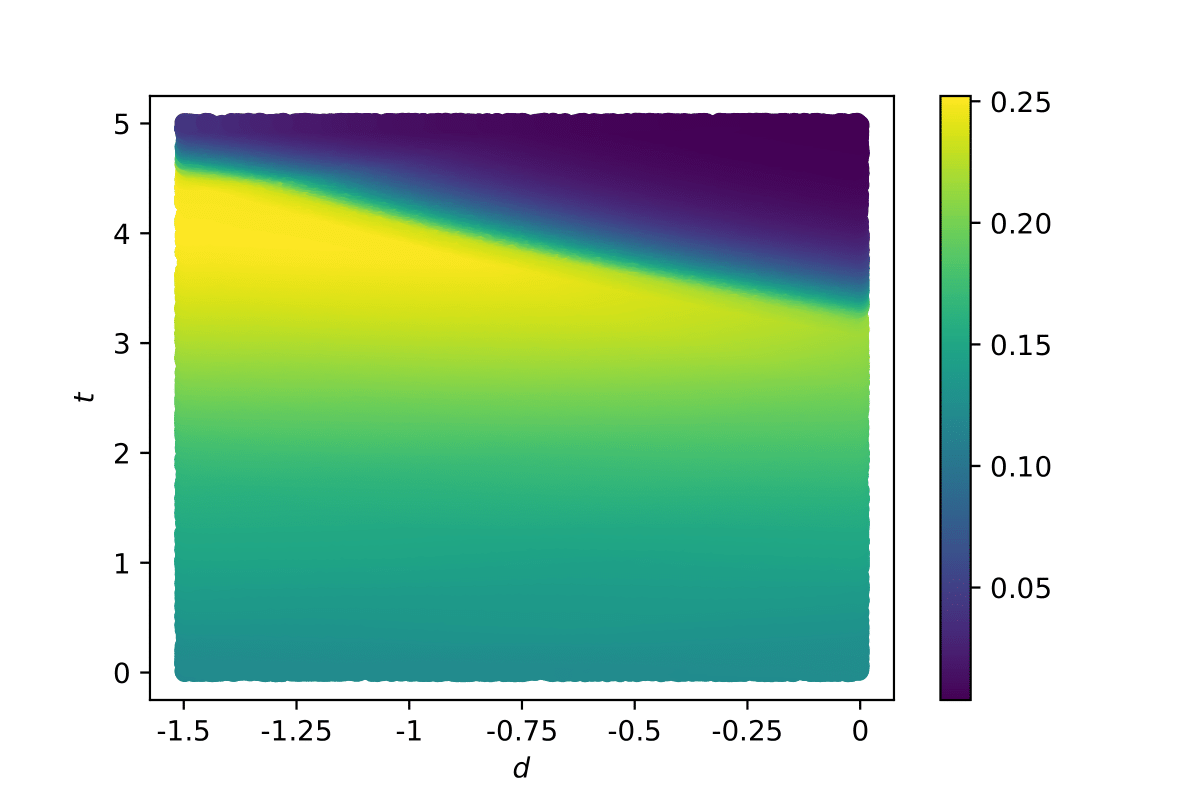}
    }
\caption{Numerical results of Algorithm \ref{algo:nn_training} with $\sigma=1$, $\lambda=0.5$, $d=1.5$, and $T=5$. } 
\label{eq:kernels}
\end{figure}

\subsection{One asset with delay and one without}

To further explore the effect of the delay on the control, we now study a toy example where the investor has two investment opportunities, one with a delayed execution and one without.  More precisely, consider the following portfolio dynamic
\bec{
    & dX_{t}^{\left(\alpha, \beta\right)}=\alpha_{t} \left\{ (\sigma_{1} \lambda_1) dt + \sigma_1 dW^1_t \right\} + \beta_{t-d} \left\{(\sigma_{2}\lambda_2) dt + \sigma_2 dW^2_t\right\},  \quad t\in [0,T], \\
    & X_0 = x_0, \quad \beta_s = \gamma_s, \quad  s\in [-d,0], \\
    & \langle W^1, W^2 \rangle_t = \rho t,
}
where $x_0 > 0$ and $\gamma \in L^2([-d, 0], \R)$, together with the same optimization objective \eqref{optimization_problem_mv} as before. Here, $\alpha_t$ and $\beta_t$ correspond respectively to the amounts of money the investor decides to invest at time $t$ in the undelayed and the delayed risky assets. The constants $ \lambda_i$ and $ \sigma_i$ represent respectively the risk premium and the volatility of the risky asset $i$. Following the heuristic approach of Section \ref{section:heuri}, we define the following set of Riccati-PDEs on  $[0,T]\times [-d,0]^2$
\begin{align}
    \label{eq:mutli_1}
	     &\dot{P}_{11}(t)  = \lambda_1^2 P_{11}(t) + \frac{P_{12}(t,0)^2}{P_{\hat{22}}(t,0)},\\
	     &(\partial_t-\partial_s)(P_{12})(t,s) = \lambda_1^2 P_{12}(t,s) + \frac{P_{12}(t,0)P_{22}(t,s, 0)}{P_{\hat{22}}(t,0)},\\
	     &(\partial_t - \partial_s)(P_{\hat{22}})(t,s)=0, \\
	     &(\partial_t - \partial_s-\partial_r)(P_{22})(t,s,r) = \lambda_1^2 \frac{P_{12}(t,s)P_{12}(t,r)}{P_{11}(t)} + \frac{P_{22}(t,s,0)P_{22}(t,0,r)}{P_{\hat{22}}(t,0)},
\end{align}
accompanied by the boundary conditions, for almost any $t,s \in [0,T]\times [-d,0]$
\begin{align}
\label{eq:mutli_2}
	      &P_{12}(t,-d) = \lambda_2\sigma_2\left( 1 - \rho\frac{\lambda_1}{\lambda_2} \right) P_{11}(t), &&
	      P_{\hat{22}}(t,-d) = \sigma_2^2 \left( 1-\rho^2 \right) P_{11}(t), \\
	      &P_{22}(t,s, -d)= \lambda_2\sigma_2\left( 1 - \rho\frac{\lambda_1}{\lambda_2} \right) P_{12}(t,s), && P_{22}(t,-d, s)= \lambda_2\sigma_2\left( 1 - \rho\frac{\lambda_1}{\lambda_2} \right) P_{12}(t,s),
\end{align}
and the terminal constraints 
\begin{align}
    \label{eq:mutli_3}
	      &P_{11}(T) = 1, &&
	        P_{12}(T,s) = P_{\hat{22}}(T,s) = P_{22}(T,s,r)=0,
\end{align}
for almost every $s,r \in [-d,0]$.  \\
As in the previous section, we first solve the inner optimization problem \ref{eq:inner_mv}.

\begin{lemma}
Fix $\eta\in \R$ and $\xi=c-\eta$. Assume \eqref{eq:mutli_1}-\eqref{eq:mutli_2}-\eqref{eq:mutli_3} admits a piecewise absolutely continuous solution with $P_{\hat{22}}(t)>0$  for any $t \leq T-d$, and define the couple $\left(\alpha^*, \beta^* \right) (\xi)$ as the investment strategies
\begin{align}
    &\alpha_{t}^{*}(\xi)= - \left\{ \frac{\lambda_{1}}{\sigma_{1}}\left(  X_{t}^* - \xi\right) + \rho\frac{\sigma_{2}}{\sigma_{1}} \beta_{t-d}^{*} + \frac{\lambda_{1} }{\sigma_{1}P_{11}(t)}\int_{t-d}^{t}\beta_{s}^{*}(\xi)P_{12}(t,s-t)ds \right\}.
    \\
    &\beta_{t}^{*}(\xi)= \frac{-1_{t \leq T-d}}{P_{\hat{22}}(t,0)}\left\{ P_{12}(t,0) \left(  X_{t}^* - \xi\right) +\int_{t-d}^{t}\beta_{s}^{*}(\xi) P_{22}(t,0,r-t)dr\right\},
\end{align}
where $X^*$ denotes the state process $X^{\left(\c^*, \beta^*\right)}$. Then, the inner minimization problem \eqref{eq:inner_mv} admits $(\alpha^*(\xi), \beta^*(\xi))$ as an optimal feedback strategy and the optimal value is 
\bes{
    V_0(\xi) =& P_{11}(0) (x_0-\xi)^2  + R({x}_0-\xi, \gamma),\\
}
where $R(\gamma)$ denotes the cost associated to the initial investment strategy $\gamma$ on $[-d, 0]$
\bes{
    R(x, \gamma) =&  2  x \int_{-d}^0 \gamma_s P_{12}(0,s) ds + \int_{-d}^0 \gamma^2_s P_{\hat{22}}(0,s) ds + \int_{[-d,0]^2} \gamma_s\gamma_u P_{22}(0,s, r)  ds dr.
}
\end{lemma}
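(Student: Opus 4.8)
The plan is to follow the martingale-optimality route of Theorem~\ref{T:verif}, now carrying two controls, with the correlation absorbed by a preliminary change of variables. First I would reduce the problem: setting $\tilde X^{(\alpha,\beta)}=X^{(\alpha,\beta)}-\xi$ leaves the dynamics unchanged and turns \eqref{eq:inner_mv} into $\min_{(\alpha,\beta)}\E[(\tilde X^{(\alpha,\beta)}_T)^2]$ with initial state $x_0-\xi$ and initial memory $\gamma$ for $\beta$. Admissibility of the candidate pair follows as in Proposition~\ref{prop:admissibility}: the coupled system defining $(\alpha^*,\beta^*,X^*)$ — $\alpha^*$ being given explicitly in terms of $X^*$ and the past of $\beta^*$, while $\beta^*$ solves a delayed linear equation — is recast, after substituting the explicit $\alpha^*$ into the $X^*$--dynamics, as a contraction on $\bigl(L^2_{\F}([0,T],\R)\bigr)^2$ under a weighted norm $\|\cdot\|_{2,\lambda}$ for $\lambda$ large, and Burkholder--Davis--Gundy gives $\E[\sup_{t\le T}|X^*_t|^2]<\infty$. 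The one genuinely new ingredient is the correlation: writing $W^2=\rho W^1+\sqrt{1-\rho^2}\,W^\perp$ with $W^\perp$ independent of $W^1$, and setting $\widehat\alpha_t:=\sigma_1\alpha_t+\rho\sigma_2\beta_{t-d}$, the dynamics of $\tilde X$ become $d\tilde X_t=\bigl(\lambda_1\widehat\alpha_t+\sigma_2(\lambda_2-\rho\lambda_1)\beta_{t-d}\bigr)dt+\widehat\alpha_t\,dW^1_t+\sqrt{1-\rho^2}\,\sigma_2\beta_{t-d}\,dW^\perp_t$. This exhibits the problem as a classical undelayed LQ control in $\widehat\alpha$ with risk premium $\lambda_1$, superposed with a delayed LQ control in $\beta$ with effective coefficients $\sigma_2(\lambda_2-\rho\lambda_1)$ and $\sigma_2^2(1-\rho^2)$, and accounts for the factors $1-\rho^2$ and $1-\rho\lambda_1/\lambda_2$ in \eqref{eq:mutli_2}.

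Next I would introduce the extended state $Z_t=(\tilde X_t,u_t(\cdot))$, $u_t(s)=\beta_{t+s}$ for $s\in[-d,0]$, set $V^{(\alpha,\beta)}_t=\langle P_tZ_t,Z_t\rangle_H$, and apply Itô's formula, differentiating under the integral sign — legitimate by the assumed boundedness of $P$ and its derivatives, and with the $\partial_s,\partial_r$ terms arising from the chain rule on the shifted arguments of the kernels rather than from differentiating $\beta$, exactly as in the proof of Theorem~\ref{T:verif}. This gives $dV^{(\alpha,\beta)}_t=Q_t\,dt+dM_t$ with $M$ a local martingale and $Q_t$ a quadratic form in $\bigl(\tilde X_t,\widehat\alpha_t,\beta_t,\beta_{t-d},\int_{-d}^0P_{12}u_t,\dots\bigr)$. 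The heart of the proof is the completion of two squares in $Q_t$: first in $\widehat\alpha_t$ (equivalently $\alpha_t$), whose coefficient is $P_{11}(t)>0$, which yields the minimizer that rearranges into the stated $\alpha^*_t$ and leaves a residual whose correction $-\lambda_1^2(\,\cdot\,)^2/P_{11}(t)$ is cancelled precisely by the $\lambda_1^2P_{11}$, $\lambda_1^2P_{12}$ and $\lambda_1^2P_{12}P_{12}/P_{11}$ terms on the right-hand sides of \eqref{eq:mutli_1}; then in $\beta_t=u_t(0)$, where — after using the transport equations, integration by parts in $s$ and $r$, and the boundary conditions \eqref{eq:mutli_2} — the remaining residual collapses to $P_{\hat{22}}(t,0)(\beta_t-\beta^*_t)^2$ for $t\le T-d$, with $\beta^*$ the stated delayed feedback, while for $t>T-d$ the kernels $P_{12},P_{\hat{22}},P_{22}$ vanish (convention $0^2/0=0$) and $\alpha^*$ solves the classical LQ problem with $P_{11}(t)=e^{-\lambda_1^2(T-t)}>0$. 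Hence
\[
dV^{(\alpha,\beta)}_t=\bigl(\sigma_1^2P_{11}(t)\,(\alpha_t-\alpha^*_t)^2+P_{\hat{22}}(t,0)\,(\beta_t-\beta^*_t)^2\bigr)\,dt+dM_t,
\]
where $\alpha^*,\beta^*$ denote the feedback maps evaluated along $(\alpha,\beta)$, so the pair in the statement is their fixed point.

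To conclude, I would localize $M$ by stopping times $\tau_k\uparrow T$, take expectations, and let $k\to\infty$, using dominated convergence on the left (thanks to $\E[\sup_{t\le T}|\tilde X_t|^2]<\infty$ for $(\alpha,\beta)\in\Acal$) and monotone convergence on the right, to obtain $\E[(\tilde X^{(\alpha,\beta)}_T)^2]=V^{(\alpha,\beta)}_0+\E\int_0^T(\cdots)\,dt\ge V^{(\alpha,\beta)}_0$, with equality for $(\alpha,\beta)=(\alpha^*(\xi),\beta^*(\xi))$. Since $Z_0=(x_0-\xi,\gamma)$, one has $V^{(\alpha,\beta)}_0=\langle P_0(x_0-\xi,\gamma),(x_0-\xi,\gamma)\rangle_H=P_{11}(0)(x_0-\xi)^2+R(x_0-\xi,\gamma)$, which is the claimed value.

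The step I expect to be the main obstacle is the second completion of the square: once the $\alpha$-dependence has been removed, one must check that the full quadratic form $Q_t$ — including every boundary term produced by the integrations by parts in $s$ and $r$ and every $\delta_{-d}$-type contribution coming from the lifted operators — collapses exactly to $P_{\hat{22}}(t,0)(\beta_t-\beta^*_t)^2$ under \eqref{eq:mutli_1}-\eqref{eq:mutli_2}-\eqref{eq:mutli_3}. The change of variables $\widehat\alpha=\sigma_1\alpha+\rho\sigma_2\beta_{t-d}$ isolates this, reducing it to the one-asset delayed bookkeeping already behind Theorem~\ref{T:verif}; the only other point requiring care is the strict positivity $P_{11}(t)>0$ on $[0,T]$ used to form $\alpha^*_t$, which one reads off the first equation in \eqref{eq:mutli_1} as in Proposition~\ref{prop:ricatti}.
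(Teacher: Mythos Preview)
Your proposal is correct and follows exactly the route the paper intends: the paper's own proof is the single line ``similar to Lemma~\ref{L:verification_mv}'', i.e.\ translate by $\xi$ and rerun the martingale-optimality/completion-of-squares argument of Theorem~\ref{T:verif} in the two-control setting. Your decoupling substitution $\widehat\alpha_t=\sigma_1\alpha_t+\rho\sigma_2\beta_{t-d}$ is a clean way to see why the boundary constants $1-\rho\lambda_1/\lambda_2$ and $1-\rho^2$ in \eqref{eq:mutli_2} and the extra $\lambda_1^2$-terms in \eqref{eq:mutli_1} appear, and it reduces the delicate second completion of the square to the one-asset bookkeeping already carried out behind Theorem~\ref{T:verif}.
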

\begin{proof}
  The proof is similar to the one of Lemma \ref{L:verification_mv}.
\end{proof}
Finally, the parameter $\eta^*$ and efficient frontier $\text{Var}(X_T^*) = f(c)$ are given by the same formulas \eqref{eq:eta_star} and \eqref{eq:value_final} as in in the mono-asset case, $\gamma$ being the pre-investment strategy of the delayed asset.

\begin{remark}
    One surprise that emerges is that the "\textit{buy the good stock sell the bad one}" criterion is unchanged for the delayed asset. Indeed, the sign of the control for this asset is still given by the sign of $1-\rho \frac{\lambda_1}{\lambda_2}$, that fixes the sign of the $P_{12}$ and $P_{22}$, as it would be in the case without delay\footnote{When $d=0$, recall that $\c_t^* = \frac{\lambda_1P_t}{\sigma_1(1-\rho^2)}(1-\rho \frac{\lambda_2}{{\lambda_1}})(\xi^*-X_t^*)$ and $\beta^* = \frac{\lambda_2P_t}{\sigma_2(1-\rho^2)}(1-\rho \frac{\lambda_1}{{\lambda_2}})(\xi^*-X_t^*)$ with $P$ being a positive function and $\xi^* \geq X^*$. Thus, in the classical setting, the buy or sell thresholds are $(1-\rho \frac{\lambda_2}{{\lambda_1}})$ and $(1-\rho \frac{\lambda_1}{{\lambda_2}})$.}, see the boundary conditions \eqref{eq:mutli_2}. But this threshold disappears in the undelayed asset's control as now only the term $\frac{\lambda_1}{\sigma_1}$ remains in the mean-reverting term. 
\end{remark}

\textbf{Numerical simulations: }
To exhibit the effect of the correlation $\rho$, we generate two independent Brownian motions $\left( W_t^1 \right)_{t\in[0,T]}$ and $\left( B_t \right)_{t\in[0,T]}$ and define the Brownian motion $\left( W_t^2 \right)_{t\in[0,T]}$ as
\begin{align*}
    W_t^2 = \rho W_t^1 + \sqrt{1-\rho^2}B_t,\quad t\in [0,T].
\end{align*}
We then compare different scenarios with different values of correlation $\rho$ and delay $d$ while fixing $W^1$ and $B$. The numerical simulations can be found in Figures \ref{fig:matrix}, \ref{fig:rho} and \ref{fig:d}.
As it could have been expected, we see from \eqref{eq:mutli_2} and Figure \ref{fig:matrix}, that the greater $\rho$ is, the more favored the undelayed asset is.

\newpage
\null
\vfill

\begin{figure}[h!]
\begin{tabular}{cccc}
 & $d=1.5$ & $d=1$ & $d=0.5$ \\
  \rotatebox{90}{ $\;\;  \rho=-0.2$} & 
  \includegraphics[width=0.3\linewidth]{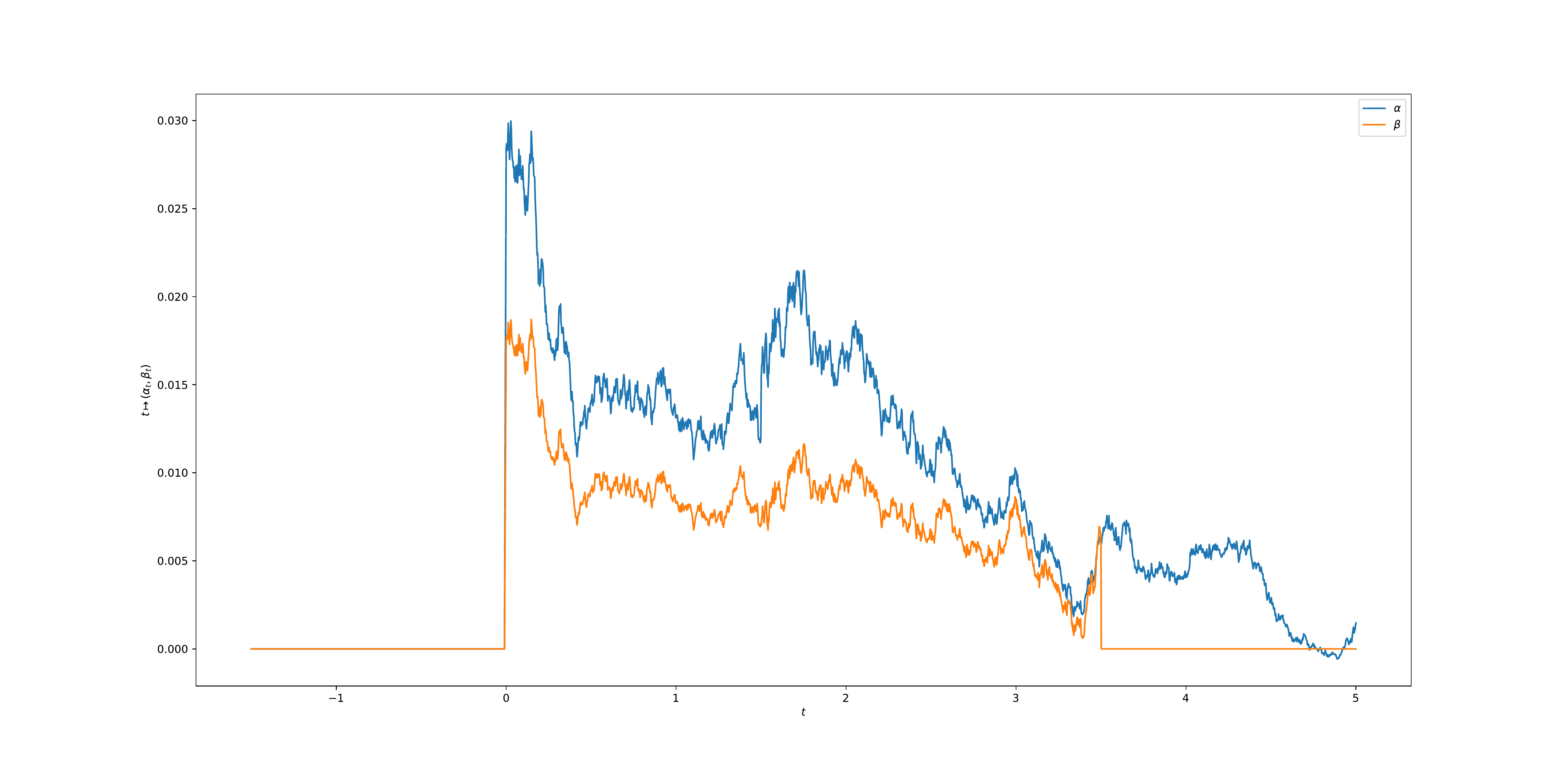} &   \includegraphics[width=0.3\linewidth]{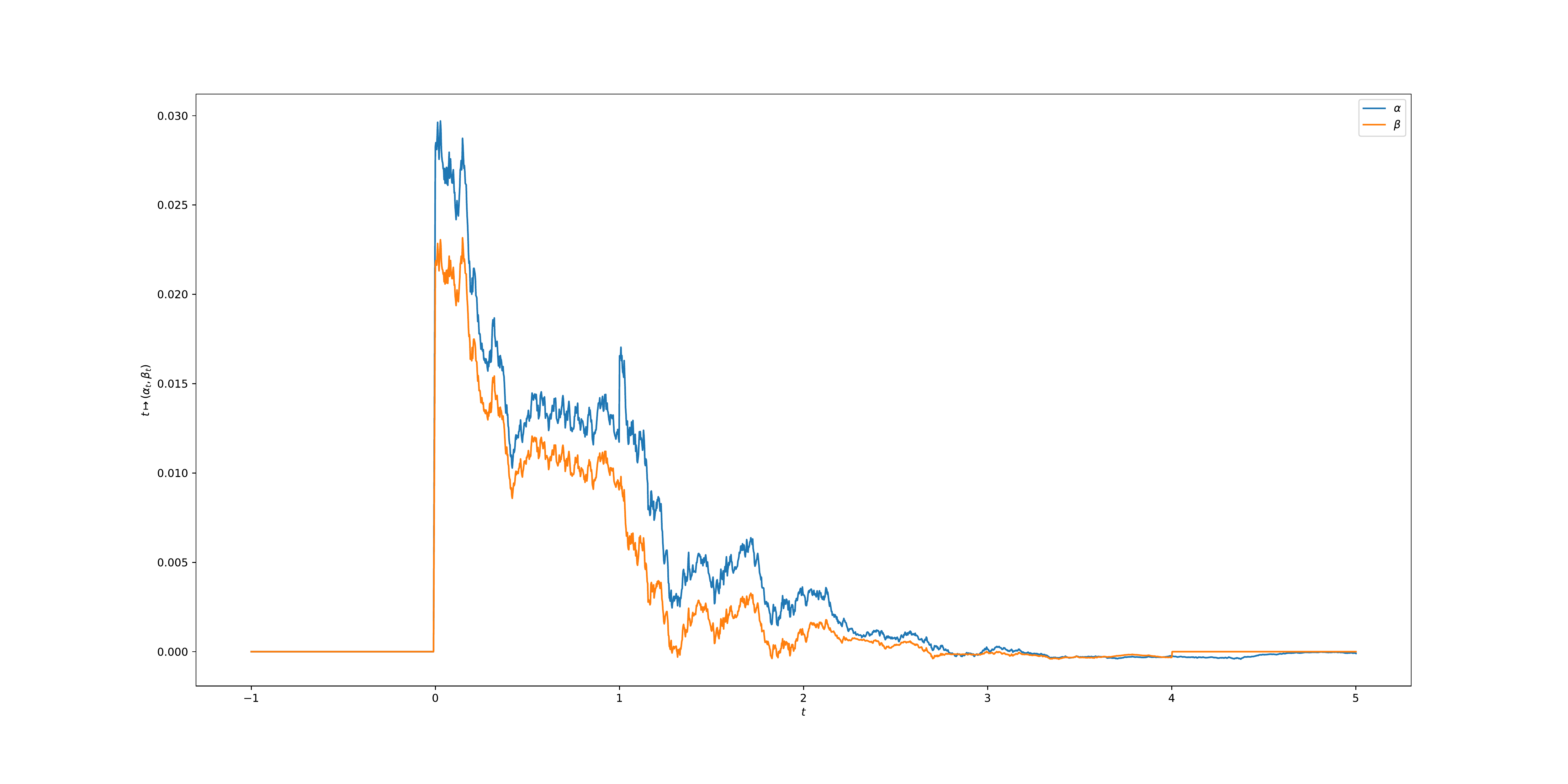} &
  \includegraphics[width=0.3\linewidth]{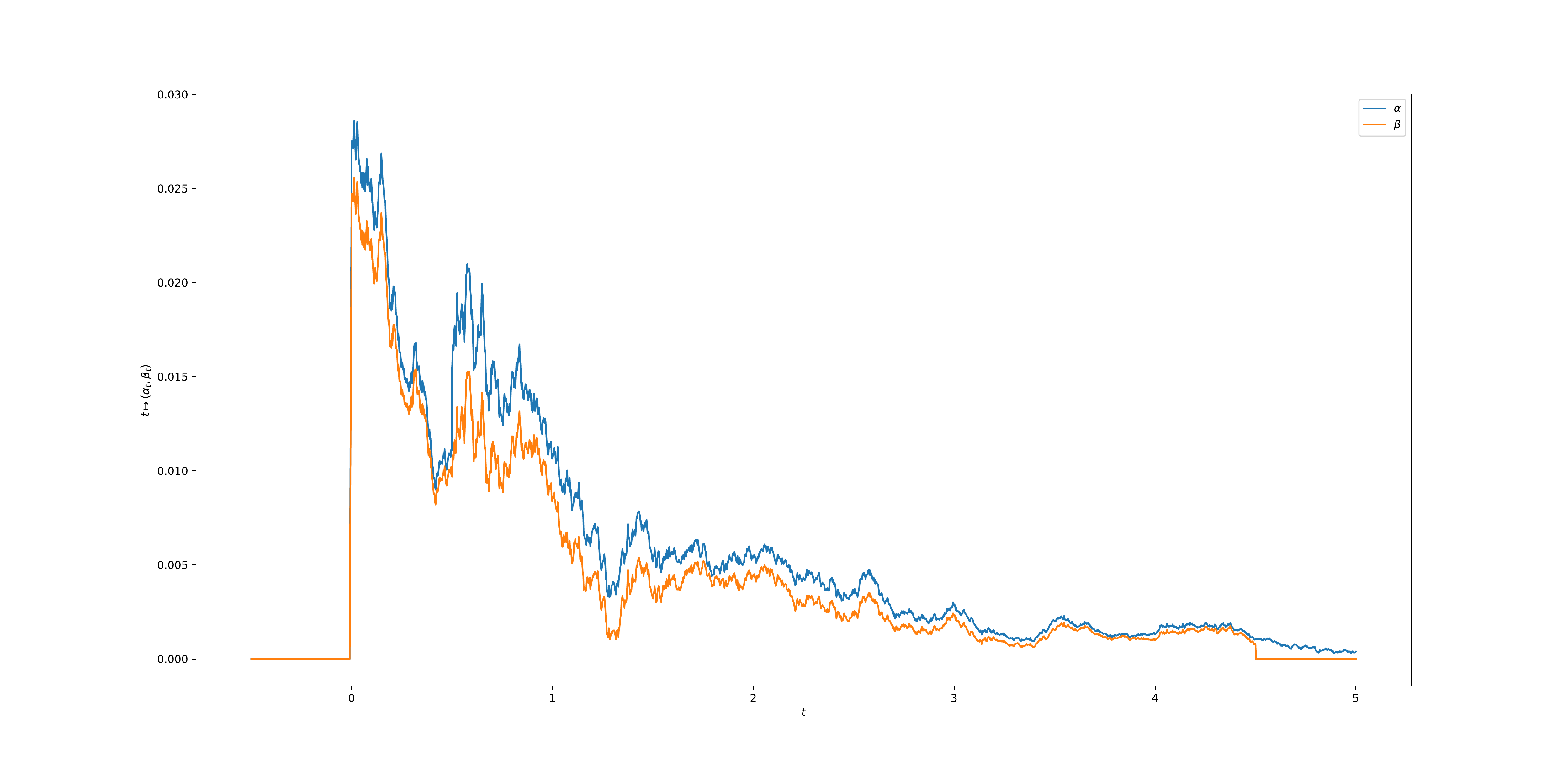}\\
%(a) first & (b) second \\[6pt]
 \rotatebox{90}{ $\;\;\;\; \; \rho=0$} &
 \includegraphics[width=0.3\linewidth]{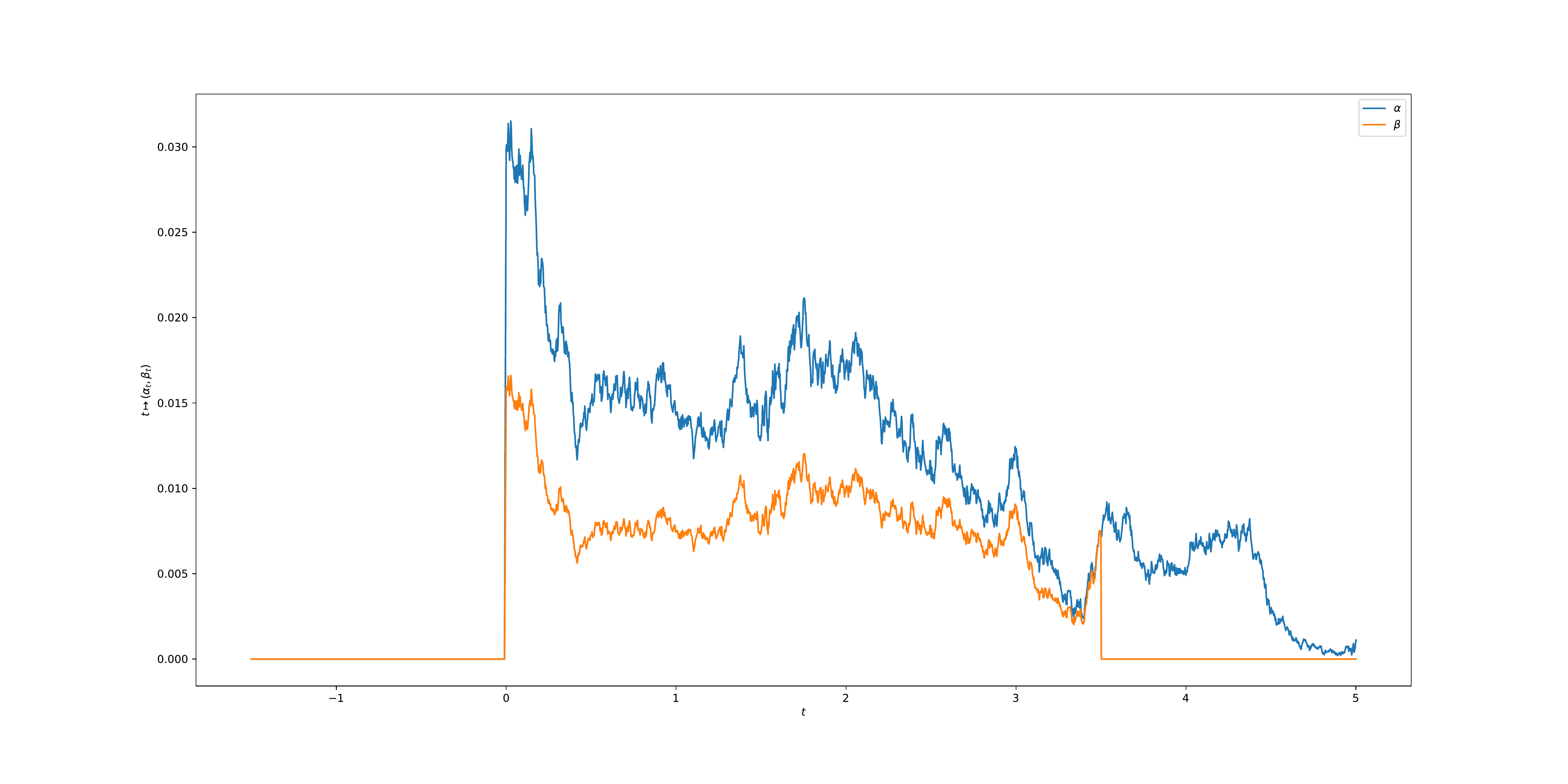} &   \includegraphics[width=0.3\linewidth]{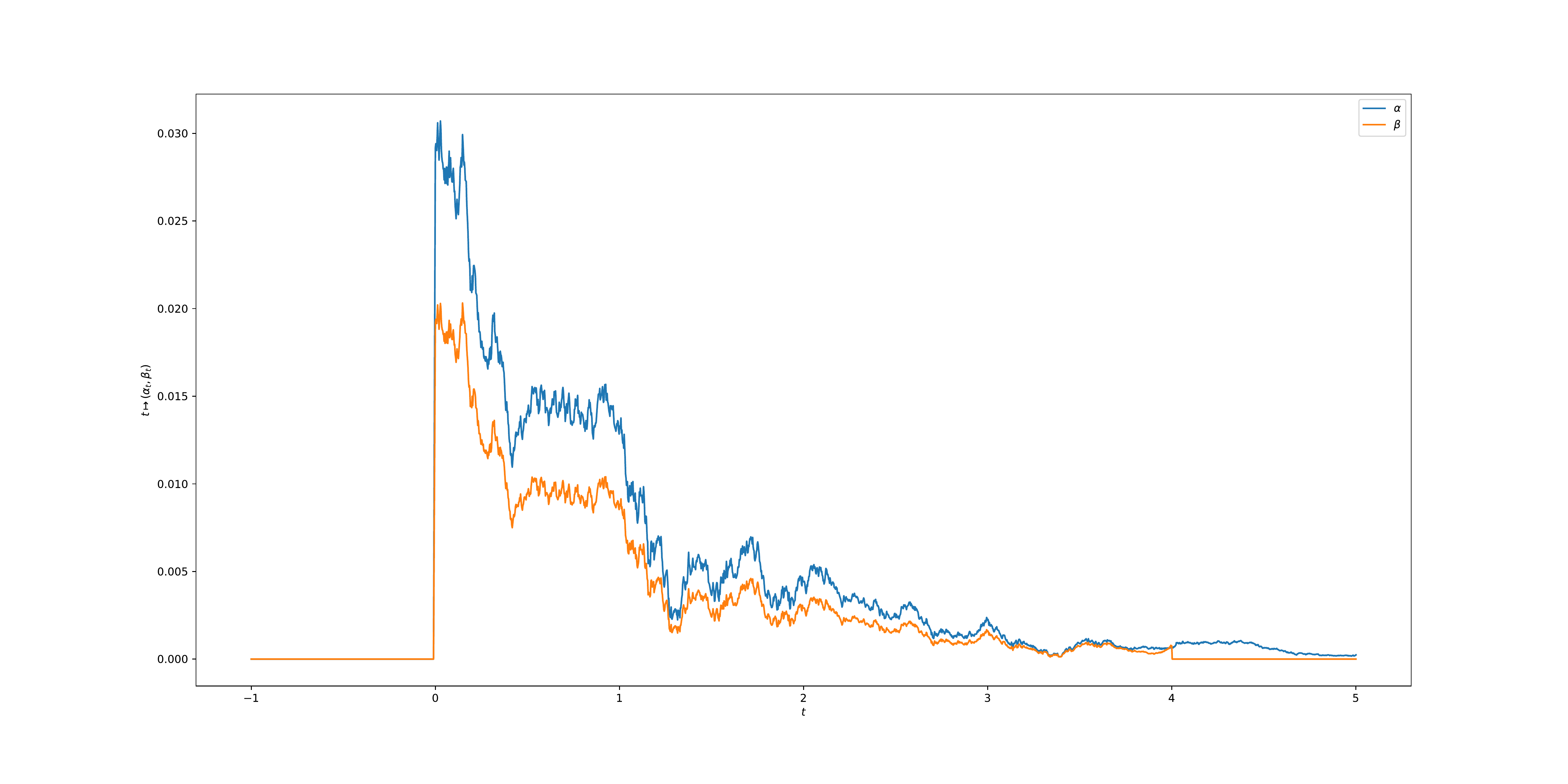} &
 \includegraphics[width=0.3\linewidth]{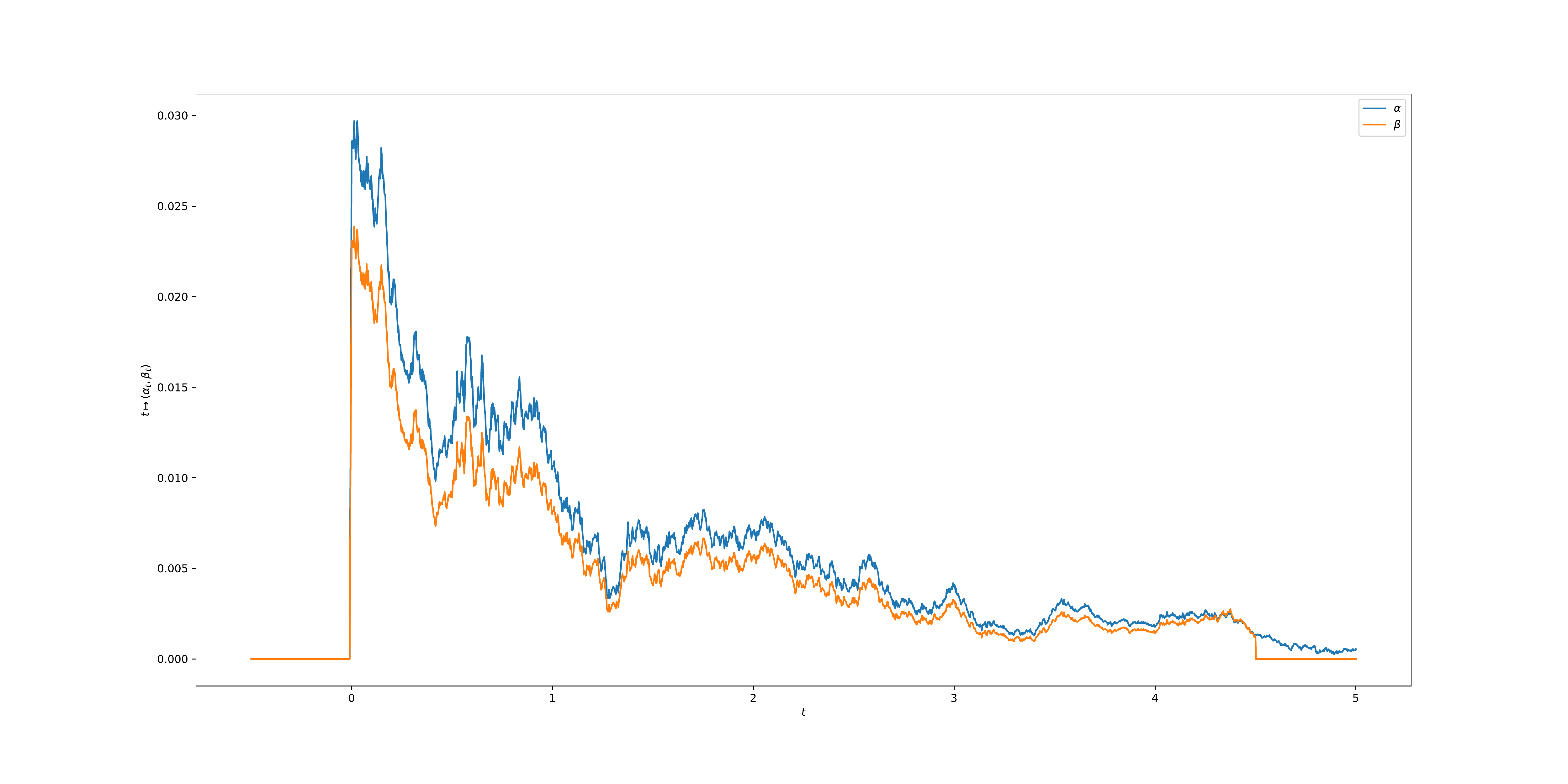}\\
%(c) third & (d) fourth \\[6pt]
 \rotatebox{90}{ $\;\; \;\; \rho=0.7$} &
 \includegraphics[width=0.3\linewidth]{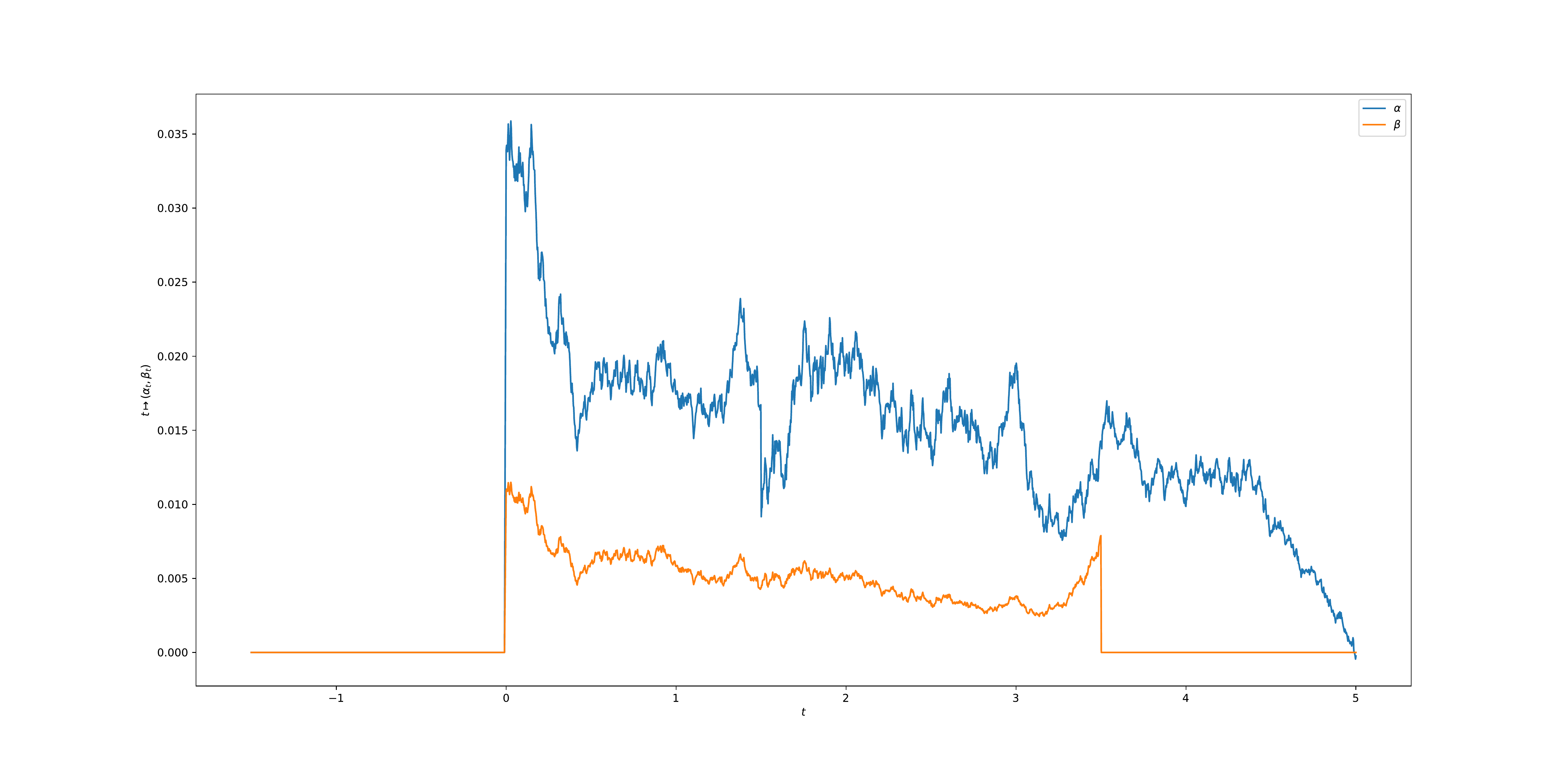} &   \includegraphics[width=0.3\linewidth]{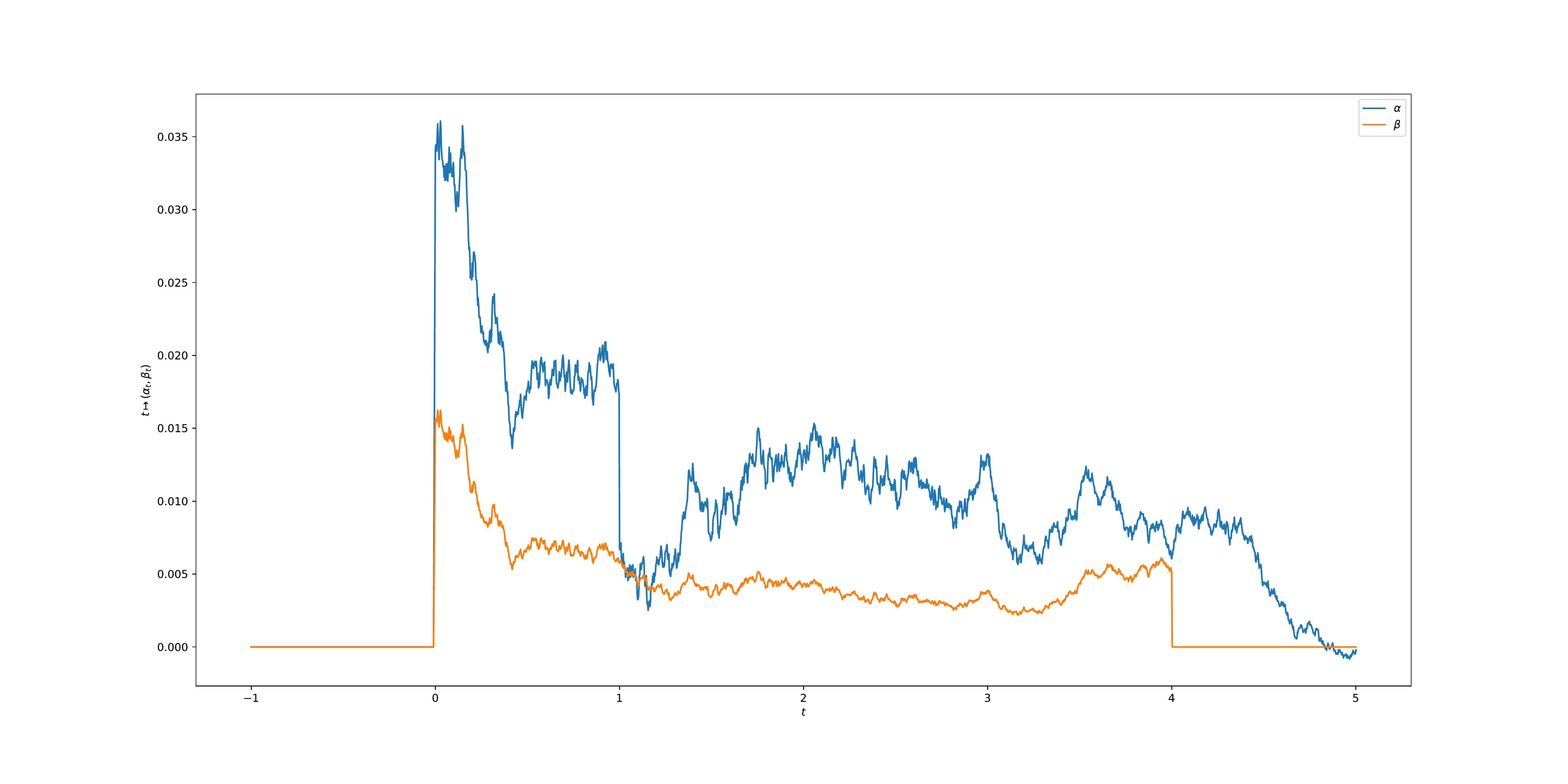} &
 \includegraphics[width=0.3\linewidth]{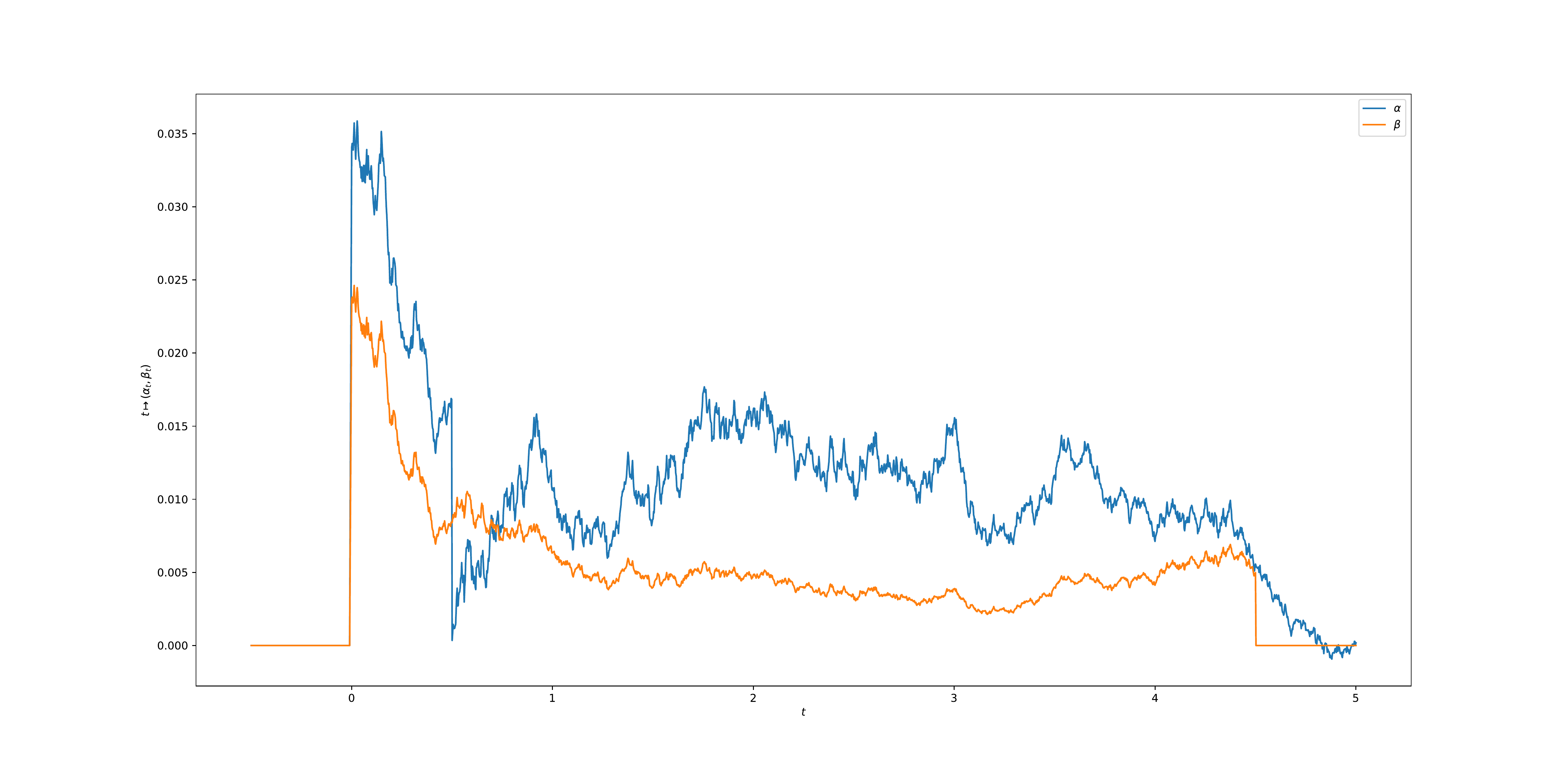}\\
\end{tabular}
\caption{$t \mapsto \left(\c_t^*,\beta_t^*\right)$, with $\sigma_1=\sigma_2=1$, $\lambda_1=\lambda_2=0.5$ and $T=5$. Blue : $\c^*$, orange : $\beta^*$. The same realizations of $W$ and $B$ were used for all experiments. Note that the more positively correlated the assets are, the more favored the undelayed asset is.}
\label{fig:matrix}
\end{figure}
\vfill

\newpage

\begin{figure}[h!]
\centering
{
    \includegraphics[width=0.97\linewidth]{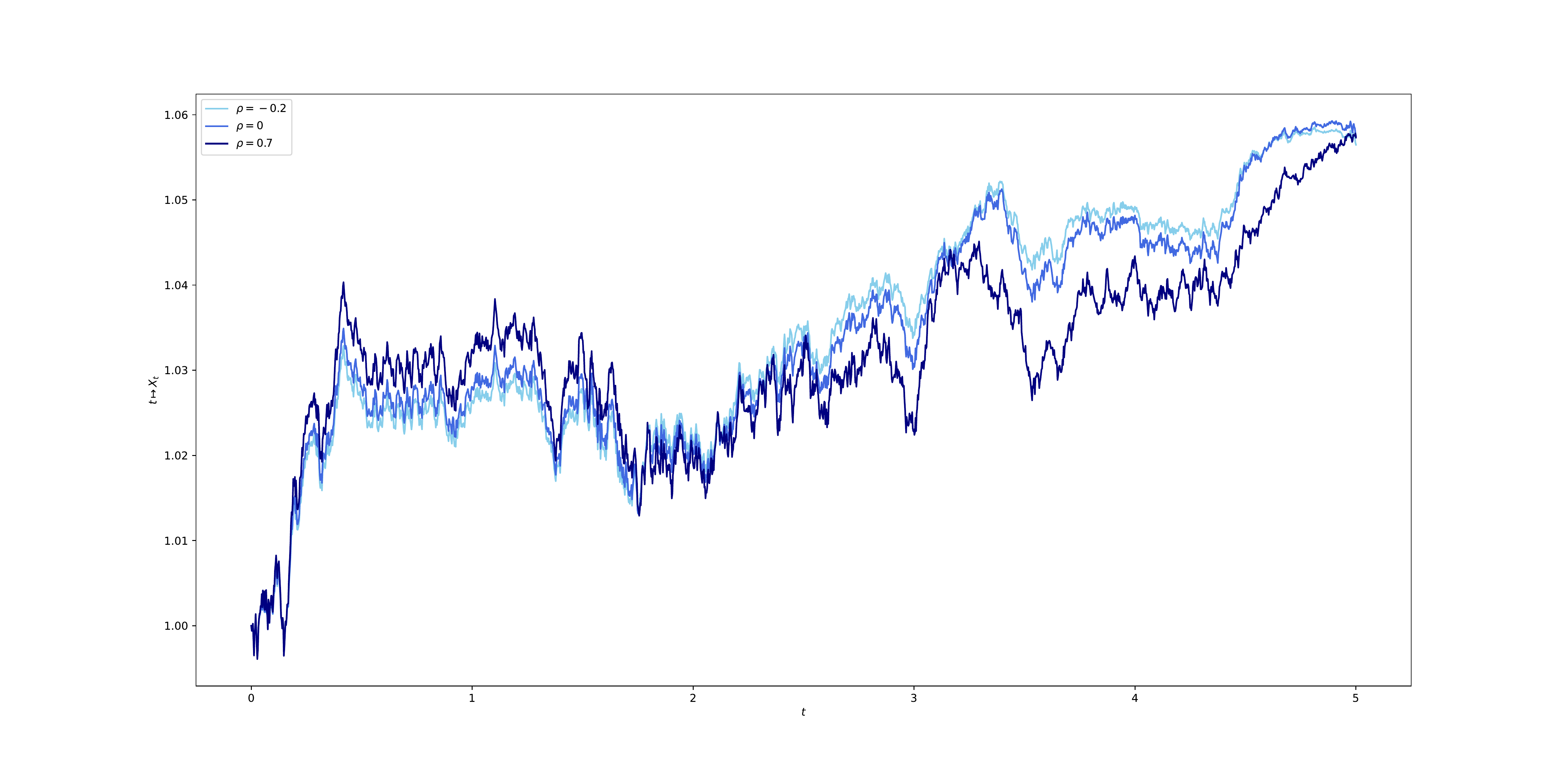}
    }
\caption{$t \mapsto X_t^*$, with $\sigma_1=\sigma_2=1$, $\lambda_1=\lambda_2=0.5$, $T=5$ and $d=1.5$ for $\rho=-0.7$, $0$ and $0.7$. The same realizations of the Brownian motions $W^1$ and $B$ was used for all experiments.}
\label{fig:rho}
\end{figure}

%\newpage

\begin{figure}[h!]
\centering
{
    \includegraphics[width=0.97\linewidth]{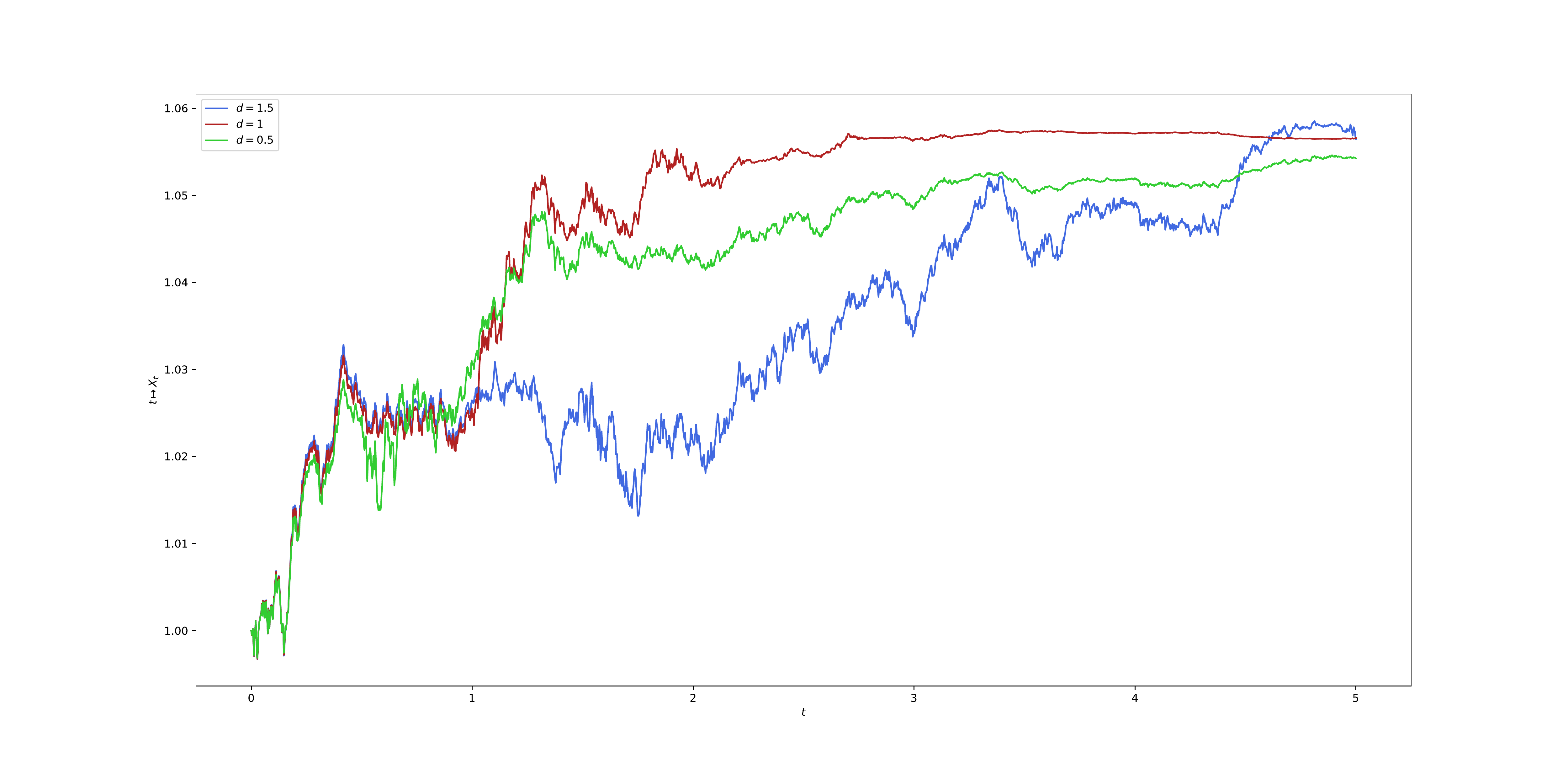}
    }
\caption{$t \mapsto X_t^*$, with $\sigma_1=\sigma_2=1$, $\lambda_1=\lambda_2=0.5$, $T=5$ and $\rho=-0.7$ for $d=1.5$, $1$ and $0.5$. The same realizations of the Brownian motions $W^1$ and $B$ was used for all experiments. }
\label{fig:d}
\end{figure}

\newpage

\bibliographystyle{plainnat}
\bibliography{bibl}

\begin{thebibliography}{27}
\providecommand{\natexlab}[1]{#1}
\providecommand{\url}[1]{\texttt{#1}}
\expandafter\ifx\csname urlstyle\endcsname\relax
  \providecommand{\doi}[1]{doi: #1}\else
  \providecommand{\doi}{doi: \begingroup \urlstyle{rm}\Url}\fi

\bibitem[Alekal et~al.(1971)Alekal, Brunovsky, Chyung, and
  Lee]{alekal1971quadratic}
Yogish Alekal, Pavol Brunovsky, DH~Chyung, and E~Lee.
\newblock The quadratic problem for systems with time delays.
\newblock \emph{IEEE Transactions on Automatic Control}, 16\penalty0
  (6):\penalty0 673--687, 1971.

\bibitem[Asea and Zak(1999)]{asea1999time}
Patrick~K Asea and Paul~J Zak.
\newblock Time-to-build and cycles.
\newblock \emph{Journal of economic dynamics and control}, 23\penalty0
  (8):\penalty0 1155--1175, 1999.

\bibitem[Bambi(2008)]{bambi2008endogenous}
Mauro Bambi.
\newblock Endogenous growth and time-to-build: The ak case.
\newblock \emph{Journal of Economic Dynamics and Control}, 32\penalty0
  (4):\penalty0 1015--1040, 2008.

\bibitem[Bambi et~al.(2012)Bambi, Fabbri, and Gozzi]{bambi2012optimal}
Mauro Bambi, Giorgio Fabbri, and Fausto Gozzi.
\newblock Optimal policy and consumption smoothing effects in the time-to-build
  ak model.
\newblock \emph{Economic Theory}, 50\penalty0 (3):\penalty0 635--669, 2012.

\bibitem[Bensoussan et~al.(2007)Bensoussan, Da~Prato, Delfour, and
  Mitter]{bensoussan2007representation}
Alain Bensoussan, Giuseppe Da~Prato, Michel~C Delfour, and Sanjoy~K Mitter.
\newblock \emph{Representation and control of infinite dimensional systems}.
\newblock Springer Science \& Business Media, 2007.

\bibitem[Carmona et~al.(2018)Carmona, Fouque, Mousavi, and
  Sun]{carmona2018systemic}
Ren{\'e} Carmona, Jean-Pierre Fouque, Seyyed~Mostafa Mousavi, and Li-Hsien Sun.
\newblock Systemic risk and stochastic games with delay.
\newblock \emph{Journal of Optimization Theory and Applications}, 179\penalty0
  (2):\penalty0 366--399, 2018.

\bibitem[d’Albis et~al.(2012)d’Albis, Augeraud-V{\'e}ron, and
  Venditti]{d2012business}
Hippolyte d’Albis, Emmanuelle Augeraud-V{\'e}ron, and Alain Venditti.
\newblock Business cycle fluctuations and learning-by-doing externalities in a
  one-sector model.
\newblock \emph{Journal of Mathematical Economics}, 48\penalty0 (5):\penalty0
  295--308, 2012.

\bibitem[El~Karoui(1981)]{el1981aspects}
Nicole El~Karoui.
\newblock Les aspects probabilistes du contr{\^o}le stochastique.
\newblock In \emph{{\'E}cole d’{\'e}t{\'e} de Probabilit{\'e}s de Saint-Flour
  IX-1979}, pages 73--238. Springer, 1981.

\bibitem[Fabbri and Federico(2014)]{fabbri2014infinite}
Giorgio Fabbri and Salvatore Federico.
\newblock On the infinite-dimensional representation of stochastic controlled
  systems with delayed control in the diffusion term.
\newblock \emph{Mathematical Economics Letters}, 2\penalty0 (3-4):\penalty0
  33--43, 2014.

\bibitem[Fouque and Zhang(2019)]{fouque2019deep}
Jean-Pierre Fouque and Zhaoyu Zhang.
\newblock Deep learning methods for mean field control problems with delay.
\newblock \emph{arXiv preprint arXiv:1905.00358}, 2019.

\bibitem[Gozzi et~al.(2005)Gozzi, di~Roma, and Marinelli]{gozzi200513}
Fausto Gozzi, Sociali di~Roma, and Carlo Marinelli.
\newblock Stochastic optimal control of delay equations arising in advertising
  models.
\newblock \emph{Stochastic Partial Differential Equations and
  Applications-VII}, page 133, 2005.

\bibitem[Gozzi et~al.(2009)Gozzi, Marinelli, and Savin]{gozzi2009controlled}
Fausto Gozzi, Carlo Marinelli, and Sergei Savin.
\newblock On controlled linear diffusions with delay in a model of optimal
  advertising under uncertainty with memory effects.
\newblock \emph{Journal of optimization theory and applications}, 142\penalty0
  (2):\penalty0 291--321, 2009.

\bibitem[Hall et~al.(1977)Hall, Sims, Modigliani, and
  Brainard]{hall1977investment}
Robert~E Hall, Christopher~A Sims, Franco Modigliani, and William Brainard.
\newblock Investment, interest rates, and the effects of stabilization
  policies.
\newblock \emph{Brookings papers on economic activity}, 1977\penalty0
  (1):\penalty0 61--121, 1977.

\bibitem[Han and Hu(2021)]{han2021recurrent}
Jiequn Han and Ruimeng Hu.
\newblock Recurrent neural networks for stochastic control problems with delay.
\newblock \emph{arXiv preprint arXiv:2101.01385}, 2021.

\bibitem[Huzmezan et~al.(2002)Huzmezan, Gough, Dumont, and
  Kovac]{huzmezan2002time}
Mihai Huzmezan, William~A Gough, Guy~A Dumont, and Sava Kovac.
\newblock Time delay integrating systems: a challenge for process control
  industries. a practical solution.
\newblock \emph{Control Engineering Practice}, 10\penalty0 (10):\penalty0
  1153--1161, 2002.

\bibitem[Ichikawa(1982)]{ichikawa1982quadratic}
Akira Ichikawa.
\newblock Quadratic control of evolution equations with delays in control.
\newblock \emph{SIAM Journal on control and optimization}, 20\penalty0
  (5):\penalty0 645--668, 1982.

\bibitem[Jarlebring and Damm(2007)]{jarlebring2007lambert}
Elias Jarlebring and Tobias Damm.
\newblock The lambert w function and the spectrum of some multidimensional
  time-delay systems.
\newblock \emph{Automatica}, 43\penalty0 (12):\penalty0 2124--2128, 2007.

\bibitem[Kydland and Prescott(1982)]{kydland1982time}
Finn~E Kydland and Edward~C Prescott.
\newblock Time to build and aggregate fluctuations.
\newblock \emph{Econometrica: Journal of the Econometric Society}, pages
  1345--1370, 1982.

\bibitem[Markowitz(1952)]{markowitz1952portfolio}
Harry Markowitz.
\newblock Portfolio selection.
\newblock \emph{The Journal of Finance}, 7\penalty0 (1):\penalty0 77--91, 1952.
\newblock \doi{10.1111/j.1540-6261.1952.tb01525.x}.
\newblock URL
  \url{https://onlinelibrary.wiley.com/doi/abs/10.1111/j.1540-6261.1952.tb01525.x}.

\bibitem[Pauwels(1977)]{pauwels1977optimal}
Wilfried Pauwels.
\newblock Optimal dynamic advertising policies in the presence of continuously
  distributed time lags.
\newblock \emph{Journal of Optimization Theory and Applications}, 22\penalty0
  (1):\penalty0 79--89, 1977.

\bibitem[Pham(2009)]{pham2009continuous}
Huy{\^e}n Pham.
\newblock \emph{Continuous-time stochastic control and optimization with
  financial applications}, volume~61.
\newblock Springer Science \& Business Media, 2009.

\bibitem[Raissi et~al.(2019)Raissi, Perdikaris, and
  Karniadakis]{raissi2019physics}
Maziar Raissi, Paris Perdikaris, and George~E Karniadakis.
\newblock Physics-informed neural networks: A deep learning framework for
  solving forward and inverse problems involving nonlinear partial differential
  equations.
\newblock \emph{Journal of Computational Physics}, 378:\penalty0 686--707,
  2019.

\bibitem[Sethi(1974)]{sethi1974sufficient}
Suresh~P Sethi.
\newblock Sufficient conditions for the optimal control of a class of systems
  with continuous lags.
\newblock \emph{Journal of Optimization Theory and Applications}, 13\penalty0
  (5):\penalty0 545--552, 1974.

\bibitem[Sipahi et~al.(2011)Sipahi, Niculescu, Abdallah, Michiels, and
  Gu]{sipahi2011stability}
Rifat Sipahi, Silviu-Iulian Niculescu, Chaouki~T Abdallah, Wim Michiels, and
  Keqin Gu.
\newblock Stability and stabilization of systems with time delay.
\newblock \emph{IEEE Control Systems Magazine}, 31\penalty0 (1):\penalty0
  38--65, 2011.

\bibitem[Sirignano and Spiliopoulos(2018)]{sirignano2018dgm}
Justin Sirignano and Konstantinos Spiliopoulos.
\newblock Dgm: A deep learning algorithm for solving partial differential
  equations.
\newblock \emph{Journal of Computational Physics}, 375:\penalty0 1339--1364,
  2018.

\bibitem[Tian and Gao(1999)]{tian1999control}
Yu-Chu Tian and Furong Gao.
\newblock Control of integrator processes with dominant time delay.
\newblock \emph{Industrial \& engineering chemistry research}, 38\penalty0
  (8):\penalty0 2979--2983, 1999.

\bibitem[Tsoukalas(2011)]{tsoukalas2011time}
John~D Tsoukalas.
\newblock Time to build capital: Revisiting investment-cash-flow sensitivities.
\newblock \emph{Journal of Economic Dynamics and Control}, 35\penalty0
  (7):\penalty0 1000--1016, 2011.

\end{thebibliography}

\appendix

\section{Proof of Proposition \ref{prop:ricatti}}
\label{appendix:fixed_point_banach}
Our proof extends \citet[see Theorem 5]{alekal1971quadratic} to the case where the volatility is controlled. It consists in slicing the domain $\Dcal$ in slices of size $d$ and proceeding by a backward recursion. More precisely, we show existence and uniqueness over a sequence of slices $\left([T-(n+1)d, T-nd] \times [-d, 0]^2\right)_{n}$. We then concatenate the sequence of absolutely continuous solutions obtained, which yields a piece-wise absolutely continuous solution. In each slice, the proof consists of the following steps 
\begin{enumerate}[label=({\arabic*})]
    \item Show that there exists a unique solution on a small interval;
    \item Prove that the local solution is Lipschitz;
    \item As a result extend the solution to the whole slice.
\end{enumerate}
We finally concatenate the sequence of solutions obtained above.

\subsection{Slice $t \in [T-d, T]$, initialization}
On $\Dcal_b \cup \Dcal_c$, the constraints \eqref{eq:a}-\eqref{eq:b}-\eqref{eq:c} on $P_{12}, P_{\hat{22}}$ and $P_{22}$ reduce to linear homogeneous transport equations admitting closed form solutions given, for every $(t,s,r)\in \mathcal{D}_{b} \cup \mathcal{D}_c$, by
\begin{align}
     &P_{12}(t,s)  = b P_{11}(t+s+d)1_{t+s+d \leq T}, && P_{\hat{22}}(t,s)  = \sigma^2 P_{11}(t+s+d)1_{t+s+d \leq T},  \\
      & P_{22}(t,s,r)  = b^2 P_{11}(t+ s \vee r + d)1_{t+s\vee r+d \leq T}. 
\end{align}
Or, as $P_{11}(t) = 1$ for any $t \geq T-d$, we then have for every $(t,s,r)\in \mathcal{D}_{b} \cup \Dcal_c$ 
\begin{align}
     &P_{11}(t) = 1, && P_{12}(t,s)  = b1_{t+s+d \leq T},    \\
      &P_{\hat{22}}(t,s)  = \sigma^21_{t+s+d \leq T}, && P_{22}(t,s,r)  = b^21_{t+s\vee r+d \leq T}. 
\end{align}
The existence and uniqueness in the sense of Definition \ref{def:sol_E_i_simplified} are thus trivially proved on $[T-d, T]$.

\subsection{Slice $[T - 2d, T- d]$}

On $[T-2d, T-d]\times [-d, 0]^2$, we have $P_{\hat{22}}(t,s) = \sigma^2 P_{11}(t+s+d)$ so that $P_{\hat{22}}(t,0)  = \sigma^2 P_{11}(t+d)=\sigma^2$. Consequently, the system \eqref{eq:a}-\eqref{eq:b}-\eqref{eq:c} reduces to 
\begin{align}
\label{eq:E_i_system_first_slice}
    &\dot{P}_{11}(t) =\frac{P_{12}(t,0)^{2}}{\sigma^2}, \\
    &(\partial_t - \partial_s)(P_{12})(t,s)=\frac{P_{12}(t,0)P_{22}(t,s,0)}{\sigma^2}, \\
    & (\partial_t - \partial_s-\partial_r)(P_{22})(t,s,r)  =\frac{P_{22}(t,s,0)P_{22}(t,0,r)}{\sigma^2},
\end{align}
% \bes{
% \label{eq:E_i_system_first_slice}
% \dot{E}_{1}(t) & =\frac{E_{6}(t,-d)^{2}}{\tilde{F}^2} \\
% \divergence(E_{2})(t,s) & =\frac{E_{6}(t,-d)E_{9}(t,s,-d)}{\sigma^2} \\
% \divergence(E_{4})(t,s,u) & =\frac{E_{9}(t,s,-d)E_{9}(t,-d,u)}{\sigma^2},
% }
with terminal conditions 
\begin{align}
\label{eq:Ei_terminal_first_slice}
    & P_{11}(T-d)  =1, && P_{12}(T-d, s) = b, && P_{22}(T-d, s, r) = b^2,
\end{align}
% \bes{
% \label{eq:Ei_terminal_first_slice}
% E_{1}(T-d) & =1\\
% E_2(T-d, s) &= b\\
% E_4(T-d, s, u) &= b^2,\\
% }
and boundary constraints
\begin{align}
\label{eq:Ei_constraints_first_slice}
    & P_{12}(t,-d)  = b P_{11}(t),  && P_{22}(t,s,-d)  = b P_{12}(t,s).
\end{align}

\noindent Thus, for every $(t,s,r)\in[T-2d, T-d]\times[-d,0]^2$, the set of equations \eqref{eq:E_i_system_first_slice} and constraints \eqref{eq:Ei_terminal_first_slice}-\eqref{eq:Ei_constraints_first_slice} can be rewritten in the following integral form 
\bes{
\label{eq:system_integral_form_first_slice}
    P_{11}(t) &= 1- \sigma^{-2}\int_{t}^{T-d} {P_{12}(x,0)^{2}}dx, 
    \\
    P_{12}(t,s) &= b P_{11}((T-d) \wedge (t+s+d)) \\
    &\;\;\;\;- \sigma^{-2} \int_{t}^{(T-d) \wedge (t+s+d)} {P_{12}(x,0)P_{22}(x,t+s-x,0)}dx, 
    \\
    %E_{9}(t,s,u) & = { \tilde{C}E_{6}(t-s, u-s)} -\int_{t}^{t-s} \frac{E_{9}(r,r-t+s,-d)E_{9}(r,-d,r-t+u)}{\tilde{F}^2}dr,\quad -d\leq u\leq s\leq 0\\
    %E_9(t,s,u)&= E_9(t,u,s) 
    P_{22}(t,s,r)  &= b P_{12}((T-d) \wedge (t+s \wedge r+d),(s-r) \vee (r-s)-d)\\
    &\;\;\;\; - \sigma^{-2} \int_{t}^{(T-d) \wedge (t+s \wedge r+d)} {P_{22}(x,t+s-x,0)P_{22}(x,0,t+r-x)}dx.
}
We then make use of the following lemma to prove local existence of a solution.
\begin{lemma}
\label{lemma:existence_first_slice}
There exists $\tau \in (0, d]$ such that system \eqref{eq:system_integral_form_first_slice} has a unique absolutely continuous solution on $[T-\tau-d, T-d]\times [-d, 0]^2$. 
\end{lemma}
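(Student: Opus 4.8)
The plan is to read \eqref{eq:system_integral_form_first_slice} as a fixed‑point equation $P=\Phi(P)$ for the triple $P=(P_{11},P_{12},P_{22})$ on the complete metric space
\[
E_\tau=C(I_\tau,\R)\times C(I_\tau\times[-d,0],\R)\times C(I_\tau\times[-d,0]^2,\R),\qquad I_\tau:=[T-\tau-d,T-d],
\]
equipped with the sup norm, and to invoke Banach's fixed point theorem for $\tau\in(0,d]$ small enough. The structural fact that makes this slice easy is that the denominator in every equation of \eqref{eq:system_integral_form_first_slice} is the \emph{constant} $\sigma^2\neq0$ (indeed $P_{\hat{22}}(t,0)=\sigma^2P_{11}(t+d)=\sigma^2$ on $[T-2d,T-d]$), so there is no singularity to control, only the bilinear products of the kernels. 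First I would check that $\Phi$ is well defined on $E_\tau$: every evaluation and composition in \eqref{eq:system_integral_form_first_slice} has arguments staying inside $I_\tau\times[-d,0]^2$, since $|s-r|-d\in[-d,0]$ and, for $x\in[t,(T-d)\wedge(t+s+d)]$, one has $t+s-x\in[-d,s]\subseteq[-d,0]$; I would also record here that the directional operator $\partial_t-\partial_s$ (resp. $\partial_t-\partial_s-\partial_r$) annihilates the capped upper limit $(T-d)\wedge(t+s+d)$ (resp. its analogue) and that the integrand is constant along the characteristic $\{t+s=\mathrm{const}\}$ (resp. $\{t+s=\mathrm{const},\ t+r=\mathrm{const}\}$), which is what the regularity step needs.

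Next comes the invariant‑ball and contraction estimate, centered at the terminal constants $(1,b,b^2)$ (which are exactly \eqref{eq:Ei_terminal_first_slice}). On such a ball every integrand in \eqref{eq:system_integral_form_first_slice} is bounded by a constant $M=M(b,\sigma,\rho)$ and every integral runs over a set of length $\le\tau$, so the integral parts are $O(\tau)$ both for invariance and for the Lipschitz estimate. The genuine subtlety is that the \emph{boundary} terms $bP_{11}(\cdot)$ and $bP_{12}(\cdot)$ carry the coefficient $b$, which need not be $<1$: hence $\Phi$ is \emph{not} a one‑shot contraction, and the naive ball need not be invariant. This is handled via the triangular (cascading) structure of the problem: the ``instantaneous'' part of $\Phi$ is $(P_{11},P_{12},P_{22})\mapsto\bigl(1,\ b\,\mathcal{R}_1P_{11},\ b\,\mathcal{R}_2P_{12}\bigr)$ with $\mathcal{R}_1,\mathcal{R}_2$ norm‑one restriction/evaluation operators, and this map is nilpotent --- after three applications it is a constant --- while the remainder has Lipschitz constant $O(\tau)$. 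Consequently, for $\tau$ small, $\Phi^{3}$ is a strict contraction on a suitably \emph{graded} ball $\{\|P_{11}-1\|\le\rho_1,\ \|P_{12}-b\|\le\rho_2,\ \|P_{22}-b^2\|\le\rho_3\}$ with $\rho_1<\rho_2<\rho_3$ chosen so that $b\rho_1+C\tau\le\rho_2$, $b\rho_2+C\tau\le\rho_3$, $C\tau\le\rho_1$. Equivalently one may solve it as a nested fixed point: freeze $P_{11}$ and solve the coupled $(P_{12},P_{22})$ pair (whose instantaneous map is nilpotent of order two), obtaining $(P_{12},P_{22})=(P_{12}[P_{11}],P_{22}[P_{11}])$ depending Lipschitz‑continuously on $P_{11}$, then close the loop in the $O(\tau)$‑Lipschitz $P_{11}$‑equation. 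Banach's theorem then yields a unique $P\in E_\tau$ solving \eqref{eq:system_integral_form_first_slice}.

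Finally I would upgrade the fixed point to an absolutely continuous solution of \eqref{eq:E_i_system_first_slice}--\eqref{eq:Ei_terminal_first_slice}--\eqref{eq:Ei_constraints_first_slice} and reconcile it with Definition \ref{def:sol_E_i_simplified}. From the integral representation, $P_{11}$ is $C^1$ and $P_{12},P_{22}$ are Lipschitz on the slab (bounded integrands, Lipschitz integration limits, Lipschitz boundary terms), hence absolutely continuous; differentiating \eqref{eq:system_integral_form_first_slice} along the characteristic directions --- using that $\partial_t-\partial_s$ kills the capped limit and leaves only the lower‑endpoint contribution --- reproduces \eqref{eq:E_i_system_first_slice}, while letting $x\to T-d$ and $s\to-d$ (resp. $r\to-d$) gives back \eqref{eq:Ei_terminal_first_slice} and \eqref{eq:Ei_constraints_first_slice}. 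Conversely, any absolutely continuous solution of \eqref{eq:E_i_system_first_slice}--\eqref{eq:Ei_terminal_first_slice}--\eqref{eq:Ei_constraints_first_slice} on $I_\tau\times[-d,0]^2$ must satisfy \eqref{eq:system_integral_form_first_slice} (integrate each transport equation along its characteristic and insert the terminal/boundary data), hence coincides with the fixed point, giving uniqueness. I expect the main obstacle to be exactly the contraction step: one has to notice that the boundary coefficient $b$ can exceed $1$, which forces use of the nilpotent/triangular structure (or of the nested fixed point) instead of a direct contraction, and one must carefully bookkeep the moving integration limits and the requirement that the compositions stay in the domain.
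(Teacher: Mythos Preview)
Your proposal is correct and follows the same overall strategy as the paper---a Banach fixed point on the sup-norm space of continuous triples, with the ball centered at the terminal data $(1,b,b^2)$---but you handle the triangular boundary structure differently. You correctly diagnose that the boundary coefficients $b$ spoil a one-shot contraction for the ``naive'' operator $\Phi(P)=(1-\ldots,\ bP_{11}(\cdot)-\ldots,\ bP_{12}(\cdot)-\ldots)$, and you resolve this via the nilpotency of the instantaneous part (contracting $\Phi^3$, or a nested fixed point). The paper achieves the same effect more directly by \emph{defining} the fixed-point map with the cascading substitution already built in: it sets $\phi_2(\xi)=b\,\phi_1(\xi)(\cdot)-\ldots$ and $\phi_3(\xi)=b\,\phi_2(\xi)(\cdot)-\ldots$ (rather than $b\,\xi_1$ and $b\,\xi_2$). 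Since $\phi_1(\xi)-\phi_1(\xi')$ is already $O(\tau)$, the boundary contributions to $\phi_2$ and then $\phi_3$ inherit the $O(\tau)$ factor, and one obtains a direct contraction with constant $\tau m$ on a single ball $\{\|\xi_1-1\|\le 1/2,\ \|\xi_2-b\|\le |b|/2,\ \|\xi_3-b^2\|\le b^2/2\}$, without iterating or grading the radii. At the fixed point $\xi=\phi(\xi)$ one has $\phi_1(\xi)=\xi_1$ and $\phi_2(\xi)=\xi_2$, so the solution coincides with that of the original system. Your route is perfectly valid and, in fact, your explicit discussion of the regularity upgrade along characteristics and of the converse (any absolutely continuous solution must satisfy the integral system) is more thorough than the paper, which defers the Lipschitz property to a separate lemma; but the paper's substitution trick is worth knowing, as it collapses your $\Phi^3$/nested argument into a single estimate.
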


\begin{proof}
Let $\tau \in (0, d]$ and $\Scal_\tau$ denote the Banach space of absolutely continuous functions $\xi=\left(\xi_{1}(\cdot),\xi_{2}(\cdot,\cdot),\xi_{3}(\cdot,\cdot,\cdot)\right)$
defined on 
$$\mathcal{D}_{\tau}=\left\{ (t,s,r)|\ T-d-\tau\leq t\leq T-d, -d\leq s,r\leq 0 \right\},$$
endowed with the sup-norm
\begin{equation}
    \|\xi\|_\infty=\|\xi_1\|_\infty + \|\xi_2\|_\infty + \|\xi_3\|_\infty,
\end{equation}
where $\|\xi_1\|_\infty, \|\xi_2\|_\infty$ and $\|\xi_3\|_\infty$ denote, with a slight abuse of notation, the respective sup-norm on $[T-d-\tau, T-d]$, $[T-d-\tau, T-d]\times[-d, 0]$ and  $[T-d-\tau, T-d]\times[-d, 0]^2$.
Let $\Bcal_\tau$ denote the ball in $\Scal_\tau$
\bes{
\Bcal_\tau = \{ (\xi_1, \xi_2, \xi_3) \in \Scal_\tau : \quad \|\xi_1-1\| \leq 1/2, \quad  \|\xi_2-b\| \leq |b|/2,  \quad  \|\xi_3 - b^2\| \leq b^2/2 \},
}
On $\Bcal_\tau$, we denote by $\phi=\left(\phi_{1},\phi_{2},\phi_{3}\right)$ the operator defined as follows 
\bes{
\left(\phi_{1}\xi\right)(t)  &= \; 1 - \sigma^{-2}\int_{t}^{T-d}{\xi_{2}(x,0)^{2}}dx\\
\left(\phi_{2}\xi\right)(t,s) &= \;  b \phi_1(\xi)((T-d) \wedge (t+s+d)) \\
&-\sigma^{-2}\int_{t}^{(T-d) \wedge (t+s+d)} {\xi_{2}(r,0)\xi_{3}(r,t+s-x,0)}dx\\
\left(\phi_{3}\xi\right)(t,s,r) &= \; {  b
\phi_2(\xi)\left((T-d) \wedge (t+s \wedge r+d),(s-r) \vee (r-s)-d \right)}\\
&\;\;\;\; -\sigma^{-2}\int_{t}^{(T-d) \wedge (t+s \wedge r+d)} {\xi_{3}(x,t+s-x,0)\xi_{3}(x,0,t+r-x)} dx.
}
Clearly, there exists $\tilde \tau>0$ such that for any $\tau  \leq \tilde \tau $, ${\red \phi}(\Bcal_{ \tau}) \to \Bcal_{ \tau}$.
We show a contraction property on $\phi$. For any $\xi, \xi' \in \Bcal_{ \tau}$, we have the following inequalities 
\bes{
\|\phi_{1}(\xi)-\phi_{1}(\xi')\|_\infty  \leq & \;\; 4\tau\sigma^{-2} |b| \| \xi_2 - \xi_2' \|_\infty,
\\
\|\phi_{2}(\xi)- \phi_{2}(\xi') \|_{\infty} \leq & 4\tau\sigma^{-2} \left(|b| \|\xi_2-\xi_2' \|_\infty + |b|^2 \| \xi_3-\xi_3' \|_\infty \right) 
\\
&+   |b| \|\phi_1(\xi) - \phi_1( \xi ')\|_\infty,
\\
\|\phi_{3}(\xi) - \phi_{3}(\xi')\|_\infty \leq & |b| \| \phi_2(\xi) - \phi_2(\xi')\|_\infty + 4\tau\sigma^{-2} |b|^2 \|\xi_3-\xi_3'\|_\infty.
 }
Consequently, the operator $\phi$ satisfies
\bes{
    \|\phi(\xi) - \phi(\xi')\|_\infty \leq \tau m \|\xi-\xi'\|_\infty,
}
where $m>0$ depends on $b$ and $\sigma$. Therefore, for $\tau < \tilde \tau \wedge m^{-1}$,
the operator $\phi$ is a contraction of $\Bcal_\tau$ into itself. Thus, $\phi$ admits a unique fixed point in $\Bcal_\tau$, which is solution to \eqref{eq:system_integral_form_first_slice} on $\Dcal_\tau$.\\
\end{proof}

\begin{lemma}
\label{lemma:lipschitz_first_slice}
Let $\xi = (\xi_1, \xi_2, \xi_3)$ denote  the absolutely continuous solution of \eqref{eq:system_integral_form_first_slice} on $\mathcal{D}_{\tau}$ from Lemma \ref{lemma:existence_first_slice}. Then $\xi$ is Lipschitz in each variable on $\Dcal_\tau$. 
\end{lemma}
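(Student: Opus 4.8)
The plan is to estimate, on the small rectangle $\Dcal_\tau$, the moduli of continuity of the three components of the fixed point $\xi$ produced by Lemma \ref{lemma:existence_first_slice}, using only the a priori bounds that come with membership in the ball $\Bcal_\tau$ (namely $\|\xi_1-1\|_\infty\le 1/2$, $\|\xi_2-b\|_\infty\le|b|/2$, $\|\xi_3-b^2\|_\infty\le b^2/2$, so in particular all three components are bounded by constants depending only on $b$) together with the integral representation \eqref{eq:system_integral_form_first_slice}. First I would dispose of $\xi_1$: directly from the first line of \eqref{eq:system_integral_form_first_slice} one gets $|\xi_1(t)-\xi_1(t')|\le\sigma^{-2}\|\xi_2(\cdot,0)\|_\infty^2\,|t-t'|$, so $\xi_1$ is Lipschitz in $t$ with a constant depending only on $b,\sigma$.

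Next, for a step $h>0$ let $\omega_2(h)$, resp.\ $\omega_3(h)$, denote the supremum of $|\xi_2(z)-\xi_2(z')|$, resp.\ $|\xi_3(z)-\xi_3(z')|$, over all pairs of points in $\Dcal_\tau$ differing by at most $h$ in exactly one coordinate. Inserting $\xi$ into the second and third lines of \eqref{eq:system_integral_form_first_slice} and shifting one coordinate by $h$, each term is controlled as follows. The explicit boundary terms $b\,\xi_1\big((T-d)\wedge(t+s+d)\big)$ and $b\,\xi_2\big((T-d)\wedge(t+s\wedge r+d),\,|s-r|-d\big)$ are Lipschitz in $h$ because $\xi_1$ is already Lipschitz, $\xi_2$ contributes $\omega_2(h)$, and the ``characteristic maps'' $(t,s)\mapsto(T-d)\wedge(t+s+d)$ and $(t,s,r)\mapsto\big((T-d)\wedge(t+s\wedge r+d),\,|s-r|-d\big)$ are $1$-Lipschitz in each variable; the contributions from displacing the endpoints of integration are $O(h)$ by boundedness of the integrands; and the only genuinely coupling contributions are those from the $h$-shift inside the integrands $\xi_3(x,t+s-x,0)$ and $\xi_3(x,0,t+r-x)$, which are bounded by $(\text{length of integration interval})\times\|\xi_i\|_\infty\,\omega_3(h)\le C\tau\,\omega_3(h)$ since that length is at most $\tau$. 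This produces linear inequalities of the form
\[
\omega_2(h)\le A_2\,h + B_2\,\tau\,\omega_3(h),\qquad
\omega_3(h)\le A_3\,h + B_3\,\omega_2(h) + C_3\,\tau\,\omega_3(h),
\]
with $A_i,B_i,C_i$ depending only on $b,\sigma$ and the diameter of the slice.

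Finally I would close the system: substituting the first inequality into the second gives $\omega_3(h)\le(A_3+B_3A_2)\,h+(B_2B_3+C_3)\,\tau\,\omega_3(h)$, so provided $\tau$ is small enough that $(B_2B_3+C_3)\,\tau<1$ — which we may always arrange, since $\tau$ in Lemma \ref{lemma:existence_first_slice} can be taken arbitrarily small — one obtains $\omega_3(h)\le K_3\,h$, and then $\omega_2(h)\le K_2\,h$, with $K_2,K_3$ depending only on $b,\sigma,d$. Combined with the bound on $\xi_1$, this shows that $\xi$ is Lipschitz in each variable on $\Dcal_\tau$, uniformly in the remaining variables; the estimates in the variable $r$ are obtained exactly as those in $s$. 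The main (and only) subtlety is the bookkeeping that makes the $\omega_2$--$\omega_3$ coupling closable, and this is precisely where the shortness of the slice is used — the factor $\tau$ in front of $\omega_3$ plays, at the level of moduli of continuity, the same role as the contraction constant $\tau m$ in the proof of Lemma \ref{lemma:existence_first_slice}.
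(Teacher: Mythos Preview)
Your argument is correct and the overall architecture---bound $\xi_1$ directly, then set up coupled inequalities between the moduli of continuity of $\xi_2$ and $\xi_3$ using the integral representation---is the same as the paper's. The genuine difference is in how you close the coupled system. You work with the global moduli $\omega_2(h),\omega_3(h)$ and exploit that the length of every integration interval in \eqref{eq:system_integral_form_first_slice} is at most $\tau$, so a factor $\tau$ appears in front of $\omega_3$; then you invoke the freedom to shrink $\tau$ (inherited from Lemma~\ref{lemma:existence_first_slice}) to make $(B_2B_3+C_3)\tau<1$ and solve. The paper instead keeps track of the \emph{time dependence} of the moduli: it sets
\[
\epsilon(x)=\sup_{s}|\xi_2(x,s+\eta)-\xi_2(x,s)|+\sup_{s,r}|\xi_3(x,s+\eta,r)-\xi_3(x,s,r)|,
\]
derives an integral inequality of the form $\epsilon(t)\le c\big(\eta+\int_t^{T-d}\epsilon(x)\,dx\big)$, and concludes by Gronwall's lemma.

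What each buys: your route is a line shorter and mirrors the contraction argument of Lemma~\ref{lemma:existence_first_slice}, but it relies on $\tau$ being small and therefore does not extend verbatim to longer intervals. The paper's Gronwall argument requires no smallness of the slice---only boundedness of the solution---and this is exactly what is used in the next step (Lemma~\ref{L:slice_2}), where the Lipschitz property is invoked on the whole maximal interval $(\theta,T-d]$ in order to pass to the limit at $\theta$. With your version, that extension step would need an extra localisation (subdividing $(\theta,T-d]$ into pieces of length $\le\tau$ and iterating), which is routine but worth noting.
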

\begin{proof}
As $\xi_1$, $\xi_2$ and $\xi_3$ are continuous on $\mathcal{D}_\tau$, there exists a constant $m>0$ such that $|\xi_1| \wedge |\xi_2| \wedge |\xi_3| \leq m$ on $\Dcal_\tau$. Thus, $\xi_1$ is Lipschitz with constant $\kappa=m^2\sigma^{-2}$. Let us now show that $\xi_2$ and $\xi_3$ are Lipschitz in the $s$-variable. Fix $t \in [T-d - \tau, T-d]$ and $\eta>0$. Then, for any $s \in [-d, 0]$ such that $s+\eta \in [-d, 0]$, we have
\bes{
    \left|\xi_2(t,s) - \xi_2(t,s+\eta) \right|\leq& \kappa\eta +{\sigma^{-2}} \Bigg| \int_{t}^{(T-d) \wedge (t+s+\eta+d)} \xi_2(x,0)\xi_3(x,t+s+\eta-x,0)dx\\
    & - \int_{t}^{(T-d) \wedge (t+s+d)} \xi_2(x,0)\xi_3(x,t+s-x,0)dx \Bigg| \\
    & \leq \kappa \eta+ \bold{I}(t,s) + \bold{II}(t,s),
}
Since $|\xi_2| \leq m$, it yields
\bes{
    \bold{I}(t,s) \leq &    \int_t^{(T-d) \wedge (t+s+d)} \big|\xi_2(x,0)\big| \big| \xi_3(x,t+s+\eta-x,0)
     - \xi_3(x,t+s-x,0) \big|dx \\
     \leq & m  \int_t^{(T-d) \wedge (t+s+d)} \epsilon(x) dx,
}
where $\epsilon$ is defined as
\bes{
\label{eq:eps_definition}
    \epsilon(x) =&\underset{\substack{s,r\\ \in [-d,0]^2}}{\sup}\left| \xi_3(x,s,r)- \xi_3(x,s +  \eta,r) \right|+ \underset{s\in [-d,0]}{\sup}\left| \xi_2(x,s)-\xi_2(x,s +  \eta) \right|.
}
Furthermore, as $|\xi_2| \wedge |\xi_3| \leq m$ on $\Dcal_\tau$, we have 
\bes{
    \bold{II}(t,s) \leq &  \int_{(T-d) \wedge (t+s+d) }^{(T-d) \wedge (t+s+\eta+d)} |\xi_2(x,0)\xi_3(x,t+s+\eta-x,0)|dx \\
     \leq & m^2 \eta.
}
Consequently, for any $t \in [T-d-\tau, T-d]$, we obtain
\bes{
    \label{eq:xi_2_ineq}
    \underset{s}{\sup}\left|\xi_2(t,s)- \xi_2(t,s+\eta) \right| \leq m^2 \eta + m \int_t^{T-d} \epsilon(r)dr.
}
Looking at the equation of $\xi_3$ in system \eqref{eq:system_integral_form_first_slice}, we obtain in a similar manner 
\bes{
    \label{eq:xi_3}
    \left|\xi_3(t,s,r)-\xi_3(t,s+\eta,r) \right| \leq& |b| \bold{I}(t,s,r) + \sigma^{-2}\bold{II}(t,s,r). \\
}
An application to the triangle inequality combined with \eqref{eq:xi_2_ineq} and the Lipschitzianity of $\xi_1$ leads to 
\bes{
    \label{eq:I}
    \bold{I}(t,s,r) \leq&  | \xi_2((T-d) \wedge (t+(s+\eta) \wedge r+d),(s+\eta-r) \vee (r-(s+\eta))-d) \\
    &- \xi_2((T-d) \wedge (t+s \wedge r+d),(s-r) \vee (r-s)-d)|\\
     \leq & (\kappa + m^2(1+ \sigma^{-2})) \eta + m \int_{(T-d) \wedge (t+s \wedge r+d)}^{T-d} \epsilon(x)dx\\
     \leq & (1+2\kappa) \eta + m \int_{t}^{T-d} \epsilon(x)dx.
} 
Furthermore
\bes{
    \label{eq:II}
    \bold{II}(t,s,r) \leq& \Big|\int_{t}^{(T-d) \wedge (t+(s+\eta) \wedge r+d)} \xi_3(x,t+s+\eta-x,0)\xi_3(x,0,t+r-x)dx 
    \\ 
    &-\int_{t}^{(T-d) \wedge (t+s \wedge r+d)} \xi_3(x,t+s-x,0)\xi_3(x,0,t+u-x)dx \Big| 
    \\
    \leq & \int_{t}^{(T-d) \wedge (t+s \wedge r+d)} |\xi_3(x,0,t+r-x)| |\xi_3(x,t+s-x,0) \\
    &- \xi_3(x,t+(s+\eta)-x,0)| dx 
    \\ 
    & +  \int^{(T-d) \wedge (t+(s+\eta) \wedge r+d)}_{(T-d) \wedge (t+s \wedge r+d)} | \xi_3(x,t+(s+\eta)-x,0)\xi_3(r,0,t+r-x) | dx
    \\
     \leq & m^2 \eta + \int_t^{T-d} \epsilon(r) dr 
}
Thus, inequality \eqref{eq:I} together with \eqref{eq:II} and \eqref{eq:xi_3} yield the existence of a positive constant $c>0$, independent of $\eta$, such that
\bes{
     \sup_{\substack{s,r \\ \in [-d,0]^2}} \left|\xi_3(t,s,r)-\xi_3(t,s+\eta,r) \right| \leq & c \left( \eta +  \int_t^{T-d}  \epsilon(r) dr \right),
}
which, combined with \eqref{eq:xi_2_ineq} leads, for any $t \in [T-d-\tau, T-d]$, to 
\bes{
    \epsilon(t) \leq c \left(\eta + \int_t^{T-d} \epsilon(r)dr \right).
}
Consequently, an application to Gronwall's lemma yields $\epsilon(t) \leq m' \eta$ on $[T-d-\tau, T-d]$, with $m'>0$.  Thus, $\xi_2$ and $\xi_3$ are Lipschitz in the s-variable. The arguments for showing that $\xi_2$ and $\xi_3$ are Lipschitz in the $t$-variable and $\xi_3$ Lipschitz in the $r$-variable follow the same line.
\end{proof}

\begin{lemma}
\label{L:slice_2}
There exists a unique absolutely continuous solution $\xi=(\xi_1, \xi_2, \xi_3)$ of \eqref{eq:system_integral_form_first_slice} on $[T-2 d, T-d]\times [-d,0]^2$ such that $\xi_1\geq 1- d \left( \frac{b}{\sigma} \right)^2 >0$.
\end{lemma}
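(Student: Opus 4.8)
The plan is to promote the purely local solution furnished by Lemmas \ref{lemma:existence_first_slice} and \ref{lemma:lipschitz_first_slice} to a solution on the full slice $[T-2d,T-d]\times[-d,0]^2$ by a continuation argument in the style of \citet[Theorem 5]{alekal1971quadratic}, the crux being a dimension-free \emph{a priori} estimate. First I would fix, via Lemma \ref{lemma:existence_first_slice}, some $\tau_0\in(0,d]$ and the unique absolutely continuous $\xi=(\xi_1,\xi_2,\xi_3)$ solving the integral system \eqref{eq:system_integral_form_first_slice} on $\Dcal_{\tau_0}$, which by Lemma \ref{lemma:lipschitz_first_slice} is Lipschitz in each variable. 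Let $t^\ast\in[T-2d,T-d)$ be the infimum of those $t_0$ for which \eqref{eq:system_integral_form_first_slice} has an absolutely continuous solution on $[t_0,T-d]\times[-d,0]^2$ extending $\xi$; the whole task then reduces to showing $t^\ast=T-2d$ together with positivity of the $\xi_1$-component.

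The hard part will be the a priori bound. I would show that \emph{every} absolutely continuous solution of \eqref{eq:system_integral_form_first_slice} on $[t_0,T-d]\times[-d,0]^2$ satisfies
\[
1-d\left(\tfrac{b}{\sigma}\right)^2\ \le\ \xi_1\ \le\ 1,\qquad 0\ \le\ \tfrac{\xi_2}{b}\ \le\ 1,\qquad 0\ \le\ \tfrac{\xi_3}{b^2}\ \le\ 1.
\]
These should be read off \eqref{eq:system_integral_form_first_slice} itself: the data on the terminal face $t=T-d$ are $1$, $b$, $b^2$, the nonlinear source terms enter with a minus sign, and the characteristic segments inside the integrals have length at most $d$, so the standing assumption $d(b/\sigma)^2<1$ prevents the right-hand sides from changing sign. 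The bound $\xi_1\le1$ is immediate from $\xi_1(t)=1-\sigma^{-2}\int_t^{T-d}\xi_2(x,0)^2\,dx$; granting $\xi_2(\cdot,0)^2\le b^2$ on the downstream region, this same identity gives $\xi_1(t)\ge 1-\sigma^{-2}b^2(T-d-t)\ge 1-d(b/\sigma)^2$; and the two-sided bounds on $\xi_2/b$ and $\xi_3/b^2$ should be obtained simultaneously by a continuity (maximal-interval) argument on the traces $\xi_2(\cdot,0)$ and $\xi_3(\cdot,\cdot,0)$, feeding in the previous two bounds and the sign of the integrands. I expect this bootstrap to need genuine care: the crude triangle-inequality estimate only closes under the stronger smallness $d(b/\sigma)^2<\tfrac12$, so the sign structure of the integral equations must be exploited rather than bypassed.

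Granted the a priori bounds, the remainder is routine. On $(t^\ast,T-d]$ the solution is uniformly bounded by the above and uniformly continuous by Lemma \ref{lemma:lipschitz_first_slice} (whose constants depend only on these bounds and on $d$), hence extends continuously up to $t=t^\ast$ with $\xi_1(t^\ast)\ge 1-d(b/\sigma)^2>0$. If $t^\ast>T-2d$, I would restart the contraction of Lemma \ref{lemma:existence_first_slice} at $t^\ast$: on this slice $P_{\hat{22}}(\cdot,0)=\sigma^2P_{11}(\cdot+d)=\sigma^2$ is bounded away from $0$, so no denominator degenerates, and the contraction radius and time can be chosen to depend only on the a priori bounds; this extends the solution to $[t^\ast-\delta,T-d]$ for some $\delta>0$, contradicting the definition of $t^\ast$. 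Hence $t^\ast=T-2d$ and $\xi_1\ge 1-d(b/\sigma)^2>0$ on the whole slice. Uniqueness then follows from the local uniqueness in Lemma \ref{lemma:existence_first_slice} propagated along the connected interval $[T-2d,T-d]$, equivalently from a Gr\"onwall estimate applied to the difference of two solutions written in the form \eqref{eq:system_integral_form_first_slice}.
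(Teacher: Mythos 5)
Your overall architecture coincides with the paper's: local solution from Lemma \ref{lemma:existence_first_slice}, Lipschitz regularity from Lemma \ref{lemma:lipschitz_first_slice}, a maximal-interval/continuation argument to cover $[T-2d,T-d]$ (made painless here because the denominator is the constant $\sigma^2$ on this slice), and finally an a priori bound $\sup_t|\xi_2(t,0)|\le|b|$ fed into $\xi_1(t)=1-\sigma^{-2}\int_t^{T-d}\xi_2(x,0)^2dx$ to get $\xi_1\ge 1-d(b/\sigma)^2$. Up to that point your proposal is fine and essentially identical to the paper.

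The gap is precisely the step you flag but do not carry out: proving the sign/size bounds on $\xi_2$ and $\xi_3$. Your plan is a continuity bootstrap giving $0\le\xi_2/b\le1$ and $0\le\xi_3/b^2\le1$ on the whole slice, but as you yourself observe, the crude closure of such a bootstrap needs $d(b/\sigma)^2<\tfrac12$: e.g.\ from $\xi_2(t,s)=b\,\xi_1((T-d)\wedge(t+s+d))-\sigma^{-2}\int_t^{(T-d)\wedge(t+s+d)}\xi_2(x,0)\xi_3(x,t+s-x,0)dx$ one only gets $\xi_2(t,s)\ge b\bigl(1-2d(b/\sigma)^2\bigr)$, so nonnegativity of the full kernels is not available under the actual hypothesis $\Ncal\ge2$ (i.e.\ $1-d(b/\sigma)^2>0$), and "exploit the sign structure" is left as a wish rather than an argument. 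The paper's mechanism, which is the missing idea, is to work only along the characteristic traces that actually enter the equation for $\xi_2(\cdot,0)$: for fixed $t\ge T-2d$ the function $f_t(x)=\xi_3(x,t-x,0)$, $x\in[t,T-d]$, satisfies a \emph{closed} linear Volterra equation $f_t(x)=b^2-\sigma^{-2}\int_x^{T-d}f_t(u)\,\xi_3(u,0,x-u)\,du$, because the transported boundary term $b\,\xi_2\bigl((T-d)\wedge(t+d),\cdot\bigr)$ is evaluated exactly on the terminal face $t=T-d$ where $\xi_2\equiv b$ (no interior values of $\xi_2(\cdot,s)$, $s<0$, are needed); a fixed-point argument in $C([t,T-d],\R)$ then yields positivity of $f_t$, similarly $\xi_2(\cdot,0)$ has the sign of $b$, hence $0\le\xi_2(t,0)\,\mathrm{sign}(b)\le|b|$ and the desired lower bound on $\xi_1$ without any strengthening of the hypothesis. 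Without this (or an equivalent device), your proof of the inequality $\xi_1\ge1-d(b/\sigma)^2$, which is the whole point of the lemma and what drives the recursion $a_{n+1}=a_n-\frac{d}{a_n}(b/\sigma)^2$ in the next slices, is not established.
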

\begin{proof}
  Let $\theta \in [T - 2d, T - d)$ denote the lower limit of all $\tau$'s such that there exists an absolutely continuous solution $(\xi_1, \xi_2, \xi_3)$ to \eqref{eq:system_integral_form_first_slice} on $[\theta, T-d]$. Assume $\theta > T - 2 d$. From Lemma \ref{lemma:lipschitz_first_slice}, $\xi_1$, $\xi_2$ and $\xi_3$ are Lipschitz in each variable and thus admit a limit, when $t \to \theta$, which is Lipschitz. Therefore, the argument of Lemma \eqref{lemma:existence_first_slice} can be repeated to extend the existence and uniqueness of the solution of system \eqref{eq:system_integral_form_first_slice} on $[\xi, T-d]$ for $T-2d \leq \xi < \theta $. As a result, we necessarily have $\theta = T - 2 d$. It remains to prove that $0<\xi_1$. For this, note that since $\xi_1$ is solution to \eqref{eq:system_integral_form_first_slice}, we have 
  \bes{
    \label{eq:xi_1_xi_2}
    \|\xi_1 - 1\|_\infty \leq \frac{d}{\sigma^{2}}  \sup_{\substack{t \in \\ [T-2 d, T-d] }} |\xi_2(t,0)|^2.
  }
    By injecting the boundary condition \eqref{eq:Ei_terminal_first_slice} into the system \eqref{eq:system_integral_form_first_slice}, one notes  that $t \in [T-2 d, T-d] \mapsto \xi_2(t, 0)$ is solution to
    \bes{
        \xi_2(t, 0) = b - \sigma^{-2}\int_t^{T-d} \xi_2(x, 0)\xi_3(x,t-x, 0) dx, \qquad T-2 d \leq t \leq  T-d.
    }
    Or, for every $t \in [T-2d , T-d]$,  $f_t : x\in [t, T-d] \mapsto f_t(x) :=  \xi_3(x,t-x,0)$ takes only positive values as $f_t$ is solution to the system
    \bes{
    f_t(x) &= b^2 - \sigma^{-2}\int_x^{T-d} f_t(u) \xi_3(u, 0,x-u) du, \qquad x \in [t,T-d],\\
    f_t(T-d) &= b^2,
    }
    which can be proven to admit, through a contraction proof in the Banach space $C([t,T-d], \R)$, a unique positive solution since $\xi$ and its derivatives are bounded. Similarly, we also have $\xi_2(t,0) \geq 0$ for any $t \in [T-2d, T-d]$. As a result, we have $\text{\textit{sign}}(\xi_2) = \text{\textit{sign}}(b)$ and 
    \bes{
        \label{eq:xi_2_b}
         \sup_{\substack{t \in \\ [T-2 d, T-d] }} |\xi_2(t,0)| \leq |b|.
    }
 Consequently, \eqref{eq:xi_1_xi_2} and \eqref{eq:xi_2_b} yield that for any $T-2 d \leq t \leq  T-d$, we have $\xi_1 \geq 1- d \left( \frac{b}{\sigma} \right)^2 = a_2 >0$ as $\Ncal(d, b, \sigma)$ is assumed to be greater than $2$.  
% Finally, as $\xi$ is solution \eqref{eq:system_integral_form_first_slice} and thus to $\eqref{eq:E_i_simplified_bulk}$, Theorem \ref{th:verification} yields for any $t \in [T-2 \delta, T - \delta]$
% \bes{
%     \xi_1(t)x^2 = \inf_{\c \in \Acal} \E \left[ \left(X^{\c}_T \right)^2 \right] > 0,
% }
% where $t \to X^{\c}_t$ is solution to \eqref{eq:dynamic_simplified} on $[T-2\delta, T]$ with $\eta \equiv x$ and $\gamma \equiv 0$ on $[T-3\delta, T-2\delta]$. Therefore $\xi_1 > 0$.
\end{proof}
\vspace{4mm}
Finally, by setting $P_{11}(t) = \xi_1(t)$, $P_{12}(t,s) = \xi_2(t,s)$, $P_{22}(t,s,r)=\xi_3(t,s,r)$  and $P_{\hat{22}}(t,s) = \xi_1(t+s+d)$ for any $(t,s,r) \in [T-2 d, T-d] \times [-d, 0]^2$, Lemma \ref{L:slice_2} yields the existence and uniqueness of a solution $P$ to \eqref{eq:a}-\eqref{eq:b}-\eqref{eq:c} in the sense of definition \ref{def:sol_E_i_simplified} on $[T-2 d, T-d]$. The concatenation of the unique solution of \eqref{eq:a}-\eqref{eq:b}-\eqref{eq:c}  on $[T-d, T]$ and $[T-2d, T-d]$ leads to a unique solution on $[T-2d, T]$.

\subsection{From slice $[T- n d, T]$ to $[T-(n+1)d, T]$}
Let $n$ be an integer such that $2 \leq n < \Ncal(d, b, \sigma)$. Assume that there exists a solution $P$ to \eqref{eq:a}-\eqref{eq:b}-\eqref{eq:c} in the sense of Definition \ref{def:sol_E_i_simplified} on $[T-nd, T]$ such that $0<a_n \leq P_{11}(t) \leq 1$, for any $t \geq T-nd$. Recall the Definition \eqref{def:a_n} of $(a_n)_{n \geq 0}$. Consider the following system on $[T-(n+1)d, T-n d]\times [-d, 0]^2$
\bes{
\label{eq:system_n}
P_{11}(t) =& P_{11}(T-nd)- \int_{t}^{T-nd} \frac{P_{12}(x,0)^{2}}{  \sigma^2 P_{11} (x+d)}dx, 
\\
P_{12}(t,s) =& b P_{11}((T-nd) \wedge (t+s+d))-\int_{t}^{(T-n d) \wedge (t+s+d)} \frac{P_{12}(x,0)P_{22}(x,t+s-x,0)}{\sigma^2 P_{11} (x+d)}dx,\\
P_{22}(t,s,r)  =& b P_{12}((T-nd) \wedge (t+s \wedge r+d),(s-r) \vee (r-s)-d)
\\
&-\int_{t}^{(T-nd) \wedge (t+s \wedge r+d)} \frac{P_{22}(x,t+s-x,0)P_{22}(x,0,t+r-x)}{\sigma^2 P_{11} (x+d)}dx.
}
Note that this system is the same as \eqref{eq:system_integral_form_first_slice}, the only difference being the term $x\in [T-(n+1)d, T-nd] \mapsto P_{11}(x+d)$ which comes from the previous slice $[T-nd, T-(n-1)d]$. Therefore, it can be considered as a positive continuous coefficient by induction hypothesis. As result, existence and uniqueness on $[T-(n+1)d, T-nd]$ can be proven in the same fashion as in Lemmas \ref{lemma:existence_first_slice}-\ref{lemma:lipschitz_first_slice}-\ref{L:slice_2}. It remains to prove that $P_{11}(t)\geq a_{n+1}$ for any $t\in [T-(n+1)d, T-nd]$. As in Lemma \ref{L:slice_2}, and by using the induction hypothesis, we have  
\bes{
    \label{eq:bound_E_2}
    |P_{12}(t, -d)| \leq |bP_{11}(T-nd)| \leq |b|, \qquad t \in  [T-(n+1)d, T-nd].
}
Furthermore, $P_{11}$ satisfies \eqref{eq:system_n} on $[T-(n+1)d, T-nd]$,  which, combined with $P_{11} \geq a_n$ on $[T-nd, T-(n-1)d]$ and \eqref{eq:bound_E_2} yields
\bes{
    P_{11}(t) &\geq P_{11}(T-nd) - \frac{d}{a_n} \left( \frac{b}{\sigma}\right)^2 \\
    &\geq a_n - \frac{d}{a_n} \left( \frac{b}{\sigma}\right)^2  = a_{n+1}>0,
}
for any $t \in [T-(n+1) d, T-n d]$, which ends the proof.

\vspace{2cm}

\end{document}